\numberwithin{equation}{section}
\theoremstyle{plain}
\newtheorem{theorem}{Theorem}[subsection]
\newtheorem{lemma}[theorem]{Lemma}
\newtheorem{proposition}[theorem]{Proposition}
\newtheorem{corollary}[theorem]{Corollary}
\theoremstyle{definition}
\newtheorem{definition}[theorem]{Definition}
\newtheorem{notation}[theorem]{Notation}
\newtheorem{remark}[theorem]{Remark}
\newtheorem{example}[theorem]{Example}
\DeclareMathOperator{\dom}{dom}
\DeclareMathOperator{\ran}{ran}
\newcommand{\inv}[2]{#1^{-1}(#2)}
\newcommand{\0}{\emptyset}
\newcommand{\powerfin}{\mathcal{P}_{\mathrm{fin}}}
\newcommand{\xpm}{X^\pm}
\newcommand{\zenbu}[1]{\mathop{\forall#1}}
\newcommand{\aru}[1]{\mathop{\exists#1}}
\begin{document}

\title[Choice-free duality for orthocomplemented lattices]{Choice-free duality for orthocomplemented lattices by means of spectral spaces}

\author[Joseph McDonald]{Joseph McDonald}
\address{Institute for Logic, Language, and Computation\\
University of Amsterdam\\North Holland 1098 XG\\Netherlands}
\email{jsmcdon1@ualberta.ca}
\corrauthor[Kentar\^o Yamamoto]{Kentar\^o Yamamoto}
\address{Group in Logic and Methodology of Science\\
University of California at Berkeley\\California 94720-3840\\U.S.A.}
\email{ykentaro@math.berkeley.edu}

%% Second author: in the order \author, \address, \urladdr, \email
\thanks{The first researcher thanks the Institute for Logic, Language, and Computation at the University of Amsterdam for its support during the preparation of this paper. The second researcher thanks the Group in Logic and Methodology of Science at the University of California, Berkeley, the Institute of Informatics of the Czech Academy of Sciences, and the Takenaka Scholarship Foundation for their support during the preparation of this paper.}

\subjclass{06C15, 06E15, 03E25}

\keywords{Topological duality, Orthocomplemented lattice, Spectral space, Orthospace, Compact open orthoregular algebra, Vietoris hyperspace, Axiom of choice.}

\begin{abstract}
The existing topological representation of an orthocomplemented lattice via the clopen orthoregular subsets of a Stone space depends upon Alexander's Subbase Theorem, which asserts that a topological space $X$ is compact if every subbasic open cover of $X$ admits of a finite subcover. This is an easy consequence of the Ultrafilter Theorem---whose proof depends upon Zorn's Lemma, which is well known to be equivalent to the Axiom of Choice.  Within this work, we give a choice-free topological representation of orthocomplemented lattices by means of a special subclass of spectral spaces; choice-free in the sense that our representation avoids use of Alexander's Subbase Theorem, along with its associated nonconstructive choice principles. We then introduce a new subclass of spectral spaces which we call \emph{upper Vietoris orthospaces} in order to characterize up to homeomorphism (and isomorphism with respect to their orthospace reducts) the spectral spaces of proper lattice filters used in our representation. It is then shown how our constructions give rise to a choice-free dual equivalence of categories between the category of orthocomplemented lattices and the dual category of upper Vietoris orthospaces. Our duality combines Bezhanishvili and Holliday's choice-free spectral space approach to Stone duality for Boolean algebras with Goldblatt and Bimb\'o's choice-dependent orthospace approach to Stone duality for orthocomplemented lattices. 
\end{abstract}
\maketitle

\section{Introduction}\label{sec:intro}

Assuming Alexander's Subbase Theorem---which asserts that a topological space $X$ is compact  if every subbasic open cover of $X$ admits of a  finite subcover---Goldblatt~ \cite{goldblatt1} constructed, for an arbitrary orthocomplemented lattice $L$, a binary relational structure $X^{\pm}_L$, or an \emph{orthospace}, consisting of all proper lattice filters $\mathfrak{F}(L)$ of $L$ (with its associated patch topology, which is Stone) equipped with a binary orthogonal relation $\perp_L\subseteq\mathfrak{F}(L)\times\mathfrak{F}(L)$ which is irreflexive and symmetric. In addition, Goldblatt proved that (up to isomorphism) every orthocomplemented lattice $L$ arises via the clopen orthoregular (see Definition~\ref{orthoregularity}) subsets of $X^{\pm}_L=(X^{\pm}_L,\perp_L)$ ordered by set-theoretic inclusion. Much later, Bimb\'o in \cite{bimbo} introduced a class of topological orthospaces as a means
to characterize (up to homeomorphism and isomorphism with respect to $\perp$) the
dual space of $X^{\pm}_L$ and used this to prove that the category of
orthocomplemented lattices is dually equivalent to the category of orthospaces.

Note that the topological representation just described depends on the Axiom of Choice, as the proof of Alexander's Subbase Theorem assumes the Ultrafilter Theorem, whose proof depends upon Zorn's Lemma, which is equivalent to the Axiom of Choice. We refer to \cite{herrlich,rubin} for an in-depth exposition concerning how the above choice-principles hang together. The indispensability of the Axiom of Choice within Goldblatt's representation is a common facet among related topological representation theorems of various classes of ordered algebraic structures. Indeed, Stone's representation of Boolean algebras via Stone spaces in \cite{stone1}, Priestley's representation of distributive lattices via Priestley spaces in \cite{priestly}, Esakia's representation of Heyting algebras via Esakia spaces in \cite{esakia,esakia1}, and J\'onsson and Tarski's representation of modal algebras via modal spaces in \cite{jonsson}, all depend upon some nonconstructive choice principle.  

It was however recently demonstrated by Bezhanishvili and Holliday in \cite{bezhanishvili} that a choice-free topological representation of Boolean algebras is achievable, one which is independent of the Boolean Prime Ideal Theorem. Whereas Stone's choice-dependent representation for Boolean algebras shows that any Boolean algebra $B$ be can represented via the clopen sets of a Stone space $X$, Bezhanishvili and Holliday demonstrated independently of the Boolean Prime Ideal Theorem that every Boolean algebra $B$ arises via the compact open subsets of a spectral space $X$, which are also regular open in the Alexandroff topology $\mathcal{UP}(X,\leqslant)$ where $\leqslant$ is the specialization order over $X$. In addition, they established a choice-free categorical dual equivalence between the category of Boolean algebras and Boolean homomomorphisms and a the dual category of upper Vietoris spaces and spectral $p$-morphisms. 

Their techniques stemmed from Stone's observation in \cite{stone2} that distributive lattices can be represented via the compact open subsets of a subclass of spectral spaces as well as Tarski's discovery in \cite{tarski1,tarski2} that the regular open subsets of a spectral space give rise to a Boolean algebra. In addition, they incorporated techniques developed by Vietoris in \cite{vietoris} as the subclass of spectral spaces they employ can also be shown as arising as the hyperspace of closed non-empty subsets of a Stone space that comes equipped with the upper Vietoris topology. The duality established in \cite{bezhanishvili} is closely related to Jipsen and Moshier's duality for arbitrary lattices developed in \cite{moshier} in that they both use spaces of all (proper) filters.
More general duality results include a work by Hofman, Mislove, and Stralka~\cite{hms} for semilattices and another by Gonz\'alez and Jansana~\cite{gj2016} for posets.

Within this work, we combine Bezhanishvili and Holliday's choice-free spectral space approach to Stone duality for Boolean algebras with Goldblatt and Bimb\'o's choice-dependent orthospace approach to Stone duality for orthocomplemented lattices as a means to prove a choice-free topological representation theorem for the class of orthocomplemented lattices by means of a special subclass of spectral spaces, independently of Alexander's Subbase Theorem and its associated nonconstructive choice principles. We then introduce a new subclass of spectral spaces which we call \emph{upper Vietoris orthospaces} as a means to characterize (up to homeomorphism and isomorphism with respect to $\perp$) the spectral space of proper lattice filters used in our representation. We then prove that the category induced by this class of spectral spaces, along with their associated weak $p$-morphisms, is dually equivalent to the category of orthocomplemented lattices, along with their associated lattice homomorphisms. In light of this duality, we proceed by developing a \say{duality dictionary} which establishes how various lattice-theoretic concepts (as applied to orthocomplemented lattices) can be translated into their corresponding dual upper Vietoris orthospace counterparts.

Throughout the present paper, we assume the general motivations discussed by Herrlich in \cite{herrlich} of investigating mathematical structures based on ZF instead of ZFC and also assume the motivations in \cite{bezhanishvili} of applying this general constructive (or choice-free) approach to mathematics to the topological duality theory of ordered algebraic structures.

Our motivations for studying orthocomplemented lattices is two-fold: First, orthocomplemented lattices, in comparison to Boolean algebras, Heyting algebras, distributive lattices, etc., are a relatively understudied class of lattice structures within duality theory. Second, the class of all orthocomplemented lattices contains various subclasses of lattice structures that behave as algebraic models for various quantum logics. For instance, the algebraic model for quantum logics of a finite dimensional Hilbert space is a modular lattice and the algebraic model for quantum logics of an infinite dimensional Hilbert space is an orthomodular lattice, both of which are the most important subclasses of the class of orthocomplemented lattices. These insights arose, in part, from the discoveries of Birkhoff and Von Neumann in \cite{birkhoff}.

The contents of this paper are organized in the following manner: In the
second section, we establish the basic algebra of orthocomplemented lattices
and discuss some important examples. In the third section, we investigate
orthospaces, spectral spaces, and give the promised choice-free topological
representation theorem for orthcomplemented lattices. In the fourth section, we characterize the choice-free duals of the spectral spaces used in our representation. In the fifth section, we prove the promised choice-free categorical
dual equivalence theorem. In light of our duality theorem, in the sixth section we develop a \say{duality dictionary} which
establishes how various lattice-theoretic concepts (as applied to orthocomplemented lattices) can be translated into their corresponding dual UVO-space counterparts.

\section{Orthocomplemented lattices}
In this section, we review the basics of the theory of
orthocomplemented lattices. For a more detailed treatment of orthocomplemented lattices and important subclasses of these lattices, refer to MacLaren in \cite{maclaren}, Bruns and Harding in \cite{bruns}, and Kalmbach in \cite{kalmbach}.  
\subsection{Foundations}

 We begin by defining the class of orthocomplemented lattices as a variety (presentable in possibly many distinct signatures) characterized by satisfying finitely many equations. 
\begin{definition}\label{lattice1} If $L=(L;\wedge,\vee,^{\perp},0,1)$ is an algebra of type $(2,2,1,0,0)$, then $L$ is an \textit{orthocomplemented lattice} (henceforth, an \emph{ortholattice}) when the following equations are satisfied:
\setlength{\columnsep}{0in}
  \begin{multicols}{2}
\begin{enumerate}
    \item $a\wedge (b\wedge c)=(a\wedge b)\wedge c$\label{assoc1}
    \item $a\vee( b\vee c)=(a\vee b)\vee c$\label{assoc2}
    \item $a\wedge b=b\wedge c$\label{comm1}
    \item $a\vee b=b\vee c$\label{comm2}
    \item $a\wedge(b\vee a)=a$\label{abs1}
    \item $a\vee(b\wedge a)=a$\label{abs2}
  
    \item $1\wedge a=a$\label{unit1}
    \item $0\vee a=a$\label{unit2}
    \item $(a\wedge b)^{\perp}=a^{\perp}\vee b^{\perp}$\label{dem1}
    \item $(a\vee b)^{\perp}=a^{\perp}\wedge b^{\perp}$\label{dem2}
    \item $(a^{\perp}\wedge b^{\perp})^{\perp}=a\vee b$\label{dem3}
    \item $(a^{\perp}\vee b^{\perp})^{\perp}=a\wedge b$\label{dem4}
    \item $a\wedge a^{\perp}=0$\label{comp1}
    \item $a\vee a^{\perp}=1$.\label{comp2}
\end{enumerate}
\end{multicols}
\end{definition}
Observe that the above formulation guarantees that every ortholattice is a bounded complemented lattice satisfying De Morgan's distribution laws for orthocomplements over meets and joins, so that they are interdefinable lattice operations with respect to orthocomplements.   
\begin{definition}\label{lattice2}
 If $L=(L;\wedge,^{\perp},0)$ is an algebra of type $(2,1,0)$ with $a\vee b:=(a^{\perp}\wedge b^{\perp})^{\perp}$ and $1:=0^{\perp}$, then $L$ is an \textit{ortholattice} if $(L;\wedge,\vee)$ is a lattice and the following conditions are satisfied: 
 \begin{enumerate}
     \item $a\wedge a^{\perp}=0$\label{complement}
     \item $a\leq b\Longrightarrow b^{\perp}\leq a^{\perp}$\label{order reversing}
     \item $a^{\perp\perp}= a$.\label{double complement}
 \end{enumerate}
 \end{definition}
 Conditions \ref{lattice2}.\ref{order reversing} and \ref{lattice2}.\ref{double complement} guarantee that the orthocomplement operator $^{\perp}$ is  a dual order isomorphism that is an involution. That the above two formulations of an ortholattice coincide can be easily verified. 
 
  Although the equations within Definition \ref{lattice1} include some redundancies, they make explicit the fact that the class of ortholattices can simply be viewed as a variety in which the join and the meet operations need not satisfy the distributive law, a property characteristic of Boolean algebras. In fact, an algebra $B=(A;\wedge,\vee,
  {}^{\perp},0,1)$ of type $(2,2,1,0,0)$ is a Boolean algebra when $B$ satisfies the equations within Definition \ref{lattice1} and in addition, satisfies the following distribution laws: 
  \begin{equation}\label{dist1}
     a\wedge(b\vee c)=(a\wedge b)\vee(a\wedge c),\hspace{.1cm}
      a\vee(b\wedge c)=(a\vee b)\wedge(a\vee c).
  \end{equation}
Given that Definitions \ref{lattice1} and \ref{lattice2} of an ortholattice are equivalent, we will adopt the latter for the sake of simplicity. The Hasse diagrams depicted in Figure \ref{2 times 2} are examples of ortholattices.   
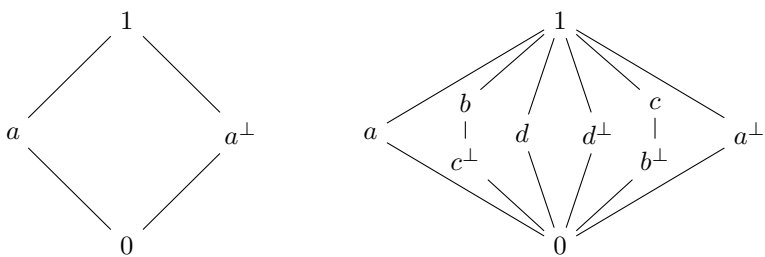
\begin{figure}[htbp]
\begin{tikzpicture}
  \node (a) at (0,2.5) {$1$};
  \node (b) at (1.5,1) {$a^{\perp}$};
  \node (c) at (-1.5,1) {$a$};
  \node (d) at (0,-0.5) {$0$};
  \draw (a) -- (b) (b) -- (d) (d) -- (c) (c) -- (a) ;
  \draw[preaction={draw=white, -,line width=6pt}];
\end{tikzpicture}
\hskip 3em 
\begin{tikzpicture}
  \node (a) at (0,2.5) {$1$};
  \node (b) at (2.5,1) {$a^{\perp}$};
  \node (c) at (-2.5,1) {$a$};
  \node (d) at (0,-0.5) {$0$};
  \node (e) at (-1.25,.65) {$c^{\perp}$};
  \node (f) at (-1.25,1.4) {$b$};
    \node (g) at (1.25,.65) {$b^{\perp}$};
  \node (h) at (1.25,1.4) {$c$};
  \node (i) at (.5,1) {$d^{\perp}$};
  \node (j) at (-.5,1) {$d$};
  \draw (a) -- (b) (b) -- (d) (d) -- (c) (c) -- (a) (e) -- (f) (g) -- (h) (e) -- (d) (g) -- (d) (f) -- (a) (h) -- (a) (i) -- (a) (j) -- (a) (i) -- (d) (j) -- (d);
  \draw[preaction={draw=white, -,line width=6pt}];
\end{tikzpicture}
\caption{the lattices $2\times 2$ and $O_{10}$}\label{2 times 2}
\end{figure}

Clearly, the $2\times 2$ lattice is an example of an ortholattice which is also a Boolean algebra and hence a distributive lattice. The fact that ortholattices however in general drop the distributive property is easily exhibited within the $O_{10}$ ortholattice which admits of sublattices $A,B\subseteq O_{10}$ that are isomorphic to the $M_3$ and $N_5$ lattices, depicted in Figure \ref{m3 and n5}.

 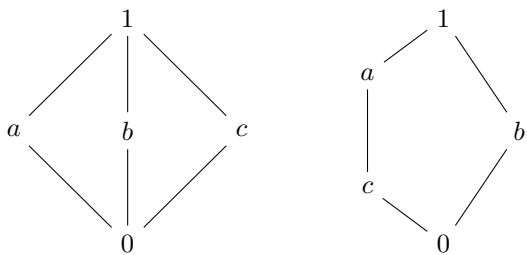
\begin{figure}[htbp]   
\begin{tikzpicture}
  \node (a) at (0,2.5) {$1$};
  \node (b) at (1.5,1) {$c$};
  \node (c) at (-1.5,1) {$a$};
  \node (d) at (0,-0.5) {$0$};
  \node (e) at (0,1) {$b$};
  \draw (a) -- (b) (b) -- (d) (d) -- (c) (c) -- (a) (a) -- (e) (d) -- (e);
  \draw[preaction={draw=white, -,line width=6pt}];
\end{tikzpicture}    
\hskip 3.25em
  \begin{tikzpicture}
  \node (a) at (0,2.5) {$1$};
     
   \node (c) at (-1,1.75) {$a$};
 \node (d) at (-1,.25) {$c$};
  \node (e) at (0,-.5) {$0$};
    \node (f) at (1,1) {$b$};
  \draw (a) -- (c) (a) -- (f) (e) -- (d) (e) -- (f) (d) -- (c) ;
  \draw[preaction={draw=white, -,line width=6pt}];
\end{tikzpicture}
\caption{The lattices $M_3$ and $N_5$}\label{m3 and n5}
\end{figure}

Note that this implies that the ortholattice $O_{10}$ is non-distributive, and hence, not a Boolean algebra. This is a consequence of the following well known characterization theorem of distributive lattices.
\begin{theorem}[Birkhoff \cite{birkhoff1} and Dedekind \cite{dedekind}]\label{birkhoff}
A lattice $L$ is distributive if and only if there exists no sublattice $A\subseteq L$ isomorphic to either $M_3$ or $N_5$.
\end{theorem}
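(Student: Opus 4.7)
I would prove the two directions separately. The forward implication is immediate once one checks that $M_3$ and $N_5$ themselves fail distributivity: in $M_3$ with incomparable middle elements $a, b, c$ we have $a \wedge (b \vee c) = a$ but $(a \wedge b) \vee (a \wedge c) = 0$, and for $N_5$ labelled as in Figure \ref{m3 and n5} we have $a \wedge (b \vee c) = a$ but $(a \wedge b) \vee (a \wedge c) = c$. Since distributivity is an equational property it is inherited by every sublattice; transporting it across a lattice isomorphism to $M_3$ or $N_5$ would then contradict their non-distributivity.

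For the converse I would argue contrapositively, assuming $L$ is non-distributive, and split on modularity (recall $L$ is modular iff $x \leq z$ implies $x \vee (y \wedge z) = (x \vee y) \wedge z$). If $L$ fails modularity, pick witnesses $x \leq z$ with $x \vee (y \wedge z) < (x \vee y) \wedge z$ strictly (the reverse inequality holds automatically). Setting
\[ \alpha = y \wedge z, \quad \beta = x \vee y, \quad \delta = x \vee (y \wedge z), \quad \epsilon = (x \vee y) \wedge z, \quad \gamma = y, \]
a direct computation using only absorption and the hypothesis $x \leq z$ shows that $\{\alpha, \delta, \epsilon, \gamma, \beta\}$ is closed under $\wedge$ and $\vee$, with $\alpha$ at the bottom, $\beta$ at the top, the chain $\delta < \epsilon$ on one side, and $\gamma$ incomparable to both; pairwise distinctness of the five elements follows from $\delta \neq \epsilon$. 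This yields the desired $N_5$-sublattice.

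In the remaining case $L$ is modular but non-distributive, so pick $a, b, c$ violating distributivity and set
\[ d = (a \wedge b) \vee (b \wedge c) \vee (c \wedge a), \qquad e = (a \vee b) \wedge (b \vee c) \wedge (c \vee a), \]
and then $a' = (a \wedge e) \vee d$, with $b'$ and $c'$ defined analogously by cycling $a, b, c$. Using the modular law repeatedly, one verifies the six identities $a' \vee b' = b' \vee c' = c' \vee a' = e$ and $a' \wedge b' = b' \wedge c' = c' \wedge a' = d$, while pairwise distinctness of $a', b', c'$ traces back precisely to the failure of distributivity at $a, b, c$. This produces an $M_3$-sublattice $\{d, a', b', c', e\}$.

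The main obstacle is the bookkeeping in the modular case: the identities for $a', b', c'$ require several nested invocations of the modular law, and establishing $a' \neq b'$ is precisely where the non-distributivity hypothesis is consumed. By contrast the non-modular case reduces to routine use of absorption, and the forward direction is a pure closure-under-equations argument.
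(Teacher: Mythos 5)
The paper does not prove Theorem \ref{birkhoff}; it is stated as a classical result and attributed to the cited reference \cite{davey}. Your argument is correct and is essentially the standard proof found there: the forward direction via closure of distributivity under sublattices and isomorphisms plus the explicit failures in $M_3$ and $N_5$, and the converse via the case split on modularity, with the pentagon $\{y\wedge z,\ x\vee(y\wedge z),\ (x\vee y)\wedge z,\ y,\ x\vee y\}$ in the non-modular case and the median-style elements $d$, $e$, $a'$, $b'$, $c'$ in the modular non-distributive case; your reductions of all the distinctness claims to $\delta\neq\epsilon$ (resp.\ to $d\neq e$) are the right bookkeeping.
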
  

If $L$ and $L'$ are ortholattices, then $h\colon L\to L'$ is an \emph{ortholattice homomorphism} if $h$ preserves the ortholattice operations from $L$ to $L'$. An ortholattice homomorphism $h\colon L\to L'$ is an \emph{isomorphism} if $h$ is bijective.

\subsection{Examples}
 \begin{example}\label{example1} Every Boolean algebra $B$ with Boolean complements taken to be orthocomplements is an ortholattice. 
 \end{example}

 \begin{example}\label{example2} Let $\mathcal{H}$ be a Hilbert space over a field $F$ (such as $\mathbb{R}$ or $\mathbb{C}$); namely a real or complex valued inner product space which is also a complete metric space with respect to the metric induced by the inner product $\langle\cdot,\cdot\rangle\colon\mathcal{H}\times\mathcal{H}\to F$ associated with $\mathcal{H}$. The collection $L(\mathcal{H})$ of closed linear subspaces of $\mathcal{H}$ ordered by subspace inclusion gives rise to an ortholattice in which each closed linear subspace $X\subseteq\mathcal{H}$ admits of an orthogonal complement defined by $X^{\perp}=\{x\in\mathcal{H}\mid \forall y\in X:\langle x,y\rangle=0\}$. 
 \end{example}
 \begin{remark}
  Note that in particular, if $\mathcal{H}$ is a finite dimensional Hilbert space, then $L(\mathcal{H})$ is a modular lattice and if $\mathcal{H}$ is an infinite dimensional Hilbert space, then $L(\mathcal{H})$ is an orthomodular lattice. 
 \end{remark}
 Refer to \cite{bell} and \cite{birkhoff} for more details pertaining to the modular and orthomodular lattices induced by the lattice of closed linear subspaces of $\mathcal{H}$.

\section{Representation of ortholattices via spectral spaces}
\noindent We proceed by examining orthospaces and spectral spaces. We then demonstrate how certain spectral spaces give rise to the promised choice-free representation of ortholattices. Refer to Bell in
\cite{bell} for an in-depth exposition of the general theory of orthospaces and Dickmann, Tressl, and Schwartz in \cite{dickmann} for an in-depth exposition of the general theory of spectral spaces. 
\subsection{Orthospaces and orthoregularity}
\begin{definition}\label{orthospace}
An \textit{orthospace} is pair $(X,\perp)$ such that $X$ is a set and $\mathord\perp\subseteq X^2$ is a binary \emph{orthogonality relation} which is irreflexive (i.e., $\forall x\in X$, $x\not\perp x$) and symmetric (i.e., $\forall x,y\in X$, if $x\perp y$, then $y\perp x$). 

In this case,
\begin{enumerate}
    \item For every $x\in X$ and $Y\subseteq X$, we define $x\perp Y\Longleftrightarrow x\perp y$ for all $y\in Y$.
    \item Given any $Y\subseteq X$, we define  $Y^*=\{x\mid x\perp Y\}$.
\end{enumerate}
\end{definition}

 Informally, $Y^*$ can be thought of as is the set of all
points in $X$ that are orthogonal to every point in $Y$. The first example of an orthogonality relation we consider arises via the dot product over a vector space. 
\begin{example}\label{example3}
Let $\mathbb{R}^n$ be the  $n$-dimensional Euclidean space. Given non-zero vectors $x=[x_1,\dots,x_n]$, $y=[y_1,\dots,y_n]\in\mathbb{R}^n$, we have $x\perp y$ if and only if \[x\cdot y=\sum_{i=1}^nx_iy_i=0.\] 
\end{example}
Orthogonality relations also arise from the function space of integrable functions that form a vector space equipped with some inner product. 
\begin{example}\label{example4}
Define a continuous weight function $w$ over some real closed interval $[a,b]$. Then, two continuous functions $f,g\colon\mathbb{R}\to\mathbb{R}$ are orthogonal if \[\langle f,g\rangle_w=\int_a^bf(x)g(x)w(x)dx=0.\]  For instance, the functions $f(x)=1$ and $g(x)=x$ are orthogonal if 
\[\langle f,g\rangle_w=\int_{-1}^1f(x)g(x)dx.\]
\end{example}
 
\begin{definition}\label{orthoregularity}
Let $(X,\perp)$ be an orthospace. A subset $Y\subseteq X$ is \emph{orthoregular} (or $\perp$-\textit{regular}) if and only if $Y=Y^{**}=\{z\mid z\perp Y^*\}.$ 
\end{definition}

\begin{example}
Any closed linear subspace $X\subseteq\mathbb{R}^n$ is orthoregular in the sense that $X^{\perp\perp}=X$ since $\mathbb{R}^n=X\oplus X^{\perp}$ meaning that any vector $x=[x_1,\dots,x_n]\in\mathbb{R}^n$ can be uniquely written as $x=y+z$ with $y=[y_1,\dots,y_n]\in X$ and $z=[z_1,\dots,z_n]\in X^{\perp}$, which implies that $0=x\cdot z=(y+z)\cdot z=y\cdot z+z\cdot z=z\cdot z$ and thus $z=0$ and $x=y$.  
\end{example}

\subsection{Spectral spaces}

 It will be useful to fix the following notation for important subsets of relational topological spaces that will be studied throughout this work. 
\begin{notation}
Given a topologized orthospace $(X,\leqslant,\perp,\mathcal{T})$ where $\mathcal{T}\subseteq\mathcal{P}(X)$ is some topology and $\leqslant$ is the specialization order over $X$, we define the following collections of subsets of $X$ as follows: 
\begin{enumerate}
    \item $\mathcal{C}(X)$ is the collection of sets that are compact in $X$;
    \item $\mathcal{O}(X)$ is the collection of sets that are open in $X$;
    \item $\mathcal{R}(X)$ is the collection of sets that are orthoregular in $X$;
    \item $\mathcal{UP}(X)$ is the collection of sets that are open in the upset topology (i.e., the upwards closed or upper set topology) on $X$; 
    \item $\mathrm{RO}(X)$ is the collection of subsets that are regular open in the upset topology; $\mathcal{UP}(X, \leqslant)$ where $\leqslant$ is the specialization order over $X$;
    \item $\mathrm{CLOP}(X)$ is the collection of sets that are clopen in $X$;
     \item $\mathcal{CO}(X)=\mathcal{C}(X)\cap\mathcal{O}(X)$;
     \item $\mathcal{COR}(X)=\mathcal{CO}(X)\cap\mathcal{R}(X)$;
     \item $\mathcal{CO}\mathrm{RO}(X)=\mathcal{CO}(X)\cap\mathrm{RO}(X)$;
     \item $\mathrm{CLOP}\mathcal{R}(X)=\mathrm{CLOP}(X)\cap\mathcal{R}(X)$.
\end{enumerate}
\end{notation}

We will demonstrate that every ortholattice $L$ can be represented as $\mathcal{COR}(X)$ for some spectral space $X$.

Recall that a space $X$ is a $T_0$ space if $X$ satisfies the weakest separation axiom for topological spaces; namely, for points $x,y\in X$, if $x\not= y$, then there exists an open set $U\in\mathcal{O}(X)$ such that $x\in U$ and $y\not\in U$. A space $X$ is a \emph{compact} space if every basic open cover of $X$ admit of a finite subcover. A space $X$ is \emph{coherent} if $\mathcal{CO}(X)$ is closed under intersection and forms a basis for the topology over $X$. Lastly, a space $X$ is \emph{sober} if every completely prime filter in the lattice $\mathcal{O}(X)$ of open sets of $X$ is of the form:
 \[\mathcal{O}_X(x)=\{U\in\mathcal{O}(X)\mid x\in U\}\] for some point $x\in X$. We now recall the definition of a spectral space and then a classical instance of how spectral spaces arise.   

\begin{definition}\label{spectral space}
A topological space $X$ is a \textit{spectral space} if:
\begin{enumerate}
    \item $X$ is a $T_0$ space;\label{spectral space To}
    \item $X$ is a compact space;\label{spectral space compact} 
    \item $X$ is a coherent space; and\label{spectral space coherent}
    \item $X$ is a sober space.\label{spectral space sober} 
\end{enumerate}
\end{definition}
Recall that the spectrum of a commutative ring $R$ is given by $\text{spec}(R)=\{x\subseteq R\mid \text{$x$ is a prime ideal}\}$ endowed with the Zarski topology of closed sets of the form $\{x\in\text{spec}(R)\mid y\subseteq x\}$ for some ideal $y\subseteq R$.  

 \begin{theorem}[Hochster \cite{hochster}] A topological space $X$ is a spectral space if and only if $X$ is homeomorphic to $\text{spec}(R)$ for some commutative ring $R$.   
 \end{theorem}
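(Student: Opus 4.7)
The plan is to prove both directions separately: the forward direction is a routine verification on the Zariski topology of $\mathrm{Spec}(R)$, while the converse is the substantive content of Hochster's theorem.

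For the forward direction, I would equip the set $\mathrm{Spec}(R)=\{\mathfrak{p}\subseteq R\mid \mathfrak{p}\text{ prime}\}$ with the Zariski topology whose basic opens are $D(f)=\{\mathfrak{p}\mid f\notin\mathfrak{p}\}$ for $f\in R$. The $T_0$ condition is immediate, since two distinct primes are separated by some $D(f)$. For compactness, if $\mathrm{Spec}(R)=\bigcup_i D(f_i)$, then the ideal generated by $\{f_i\}$ is contained in no prime, hence equals $R$, giving an equation $1=\sum_{k=1}^n r_k f_{i_k}$ and thus a finite subcover; the same argument shows that each $D(f)$ is compact. Coherence follows from $D(f)\cap D(g)=D(fg)$ together with the fact that the $D(f)$ form a basis of compact opens. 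Sobriety reduces to the standard identification of irreducible closed subsets of $\mathrm{Spec}(R)$ with sets of the form $V(\mathfrak{p})=\overline{\{\mathfrak{p}\}}$ for a unique prime $\mathfrak{p}$.

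For the converse, I would adopt Hochster's inverse-limit strategy. First, every finite $T_0$ space, viewed as a finite poset with its Alexandrov topology, is trivially spectral and can be realized as $\mathrm{Spec}(R)$ for some commutative ring by a standard construction that iteratively takes localizations and quotients of a polynomial ring over a field to produce the desired finite poset of primes. Second, every spectral space $X$ is the codirected limit, in the category of spectral spaces and spectral maps, of its finite $T_0$ quotients: given a finite subfamily $\mathcal{F}\subseteq\mathcal{CO}(X)$, the equivalence relation whose blocks are the atoms of the finite Boolean algebra generated by $\mathcal{F}$ yields a finite $T_0$ quotient $X_\mathcal{F}$, and $X\cong\varprojlim_\mathcal{F}X_\mathcal{F}$ as $\mathcal{F}$ ranges over the directed system of finite subfamilies of $\mathcal{CO}(X)$ ordered by inclusion.

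The final step is to show that $\mathrm{Spec}$ converts filtered colimits of commutative rings into codirected limits of spectral spaces; then, choosing rings $R_\mathcal{F}$ that realize the finite quotients $X_\mathcal{F}$ compatibly with the transition maps and setting $R=\varinjlim_\mathcal{F}R_\mathcal{F}$, one obtains $\mathrm{Spec}(R)\cong\varprojlim_\mathcal{F}X_\mathcal{F}\cong X$. The main obstacle is the compatibility and functoriality of the finite-level ring constructions: the rings realizing each $X_\mathcal{F}$ must be chosen so as to assemble into a directed system whose transition ring homomorphisms induce exactly the spectral quotient maps $X_{\mathcal{F}'}\to X_\mathcal{F}$ for $\mathcal{F}\subseteq\mathcal{F}'$. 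This bookkeeping, together with the verification that $\mathrm{Spec}$ interacts correctly with filtered colimits in this setting, is the technical heart of Hochster's original argument.
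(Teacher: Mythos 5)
The paper does not prove this theorem at all: it is quoted with attribution to Hochster \cite{hochster} purely as background motivation for the role of spectral spaces, so there is no in-paper argument to compare yours against. Judged on its own terms, your forward direction is complete and correct (the verification that $\mathrm{Spec}(R)$ with the Zariski topology is $T_0$, compact, coherent, and sober is exactly the standard one, and your identification of the $D(f)$ as a basis of compact opens closed under intersection is the right way to get coherence). One thematically relevant caveat: your compactness argument uses the step ``an ideal contained in no prime equals $R$,'' whose contrapositive is the existence of a prime ideal containing any proper ideal --- a choice principle. Since the paper's whole point is choice-freeness, it is worth being aware that even the easy direction of Hochster's theorem is not choice-free as you have written it; this does not affect the paper, which only cites the result.

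The converse, however, is not a proof but an outline with the hard part explicitly deferred. The three ingredients you list --- realizing finite $T_0$ spaces as prime spectra, writing a spectral space as the codirected limit of its finite $T_0$ quotients indexed by finite subfamilies of $\mathcal{CO}(X)$, and the fact that $\mathrm{Spec}$ sends filtered colimits of rings to codirected limits of spectral spaces --- are indeed the skeleton of Hochster's inverse-limit argument. But the step you label ``bookkeeping'' is the actual theorem: one must choose the rings $R_\mathcal{F}$ \emph{and} the transition homomorphisms $R_\mathcal{F}\to R_{\mathcal{F}'}$ so that $\mathrm{Spec}$ of each transition map is the prescribed quotient map $X_{\mathcal{F}'}\to X_\mathcal{F}$, and there is no canonical or functorial way to do this from the finite-level realizations alone; Hochster's construction of such a compatible system occupies most of his paper. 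As written, your proposal establishes the easy direction and correctly identifies the strategy for the converse, but does not close the gap that constitutes the substance of the result.
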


The following results highlight the importance of spectral spaces for the purposes of the present article.

\begin{theorem}[Stone \cite{stone2}] Every distributive lattice can be represented (up to isomorphism) as $\mathcal{CO}(X)$ for some spectral space $X$.  
\end{theorem}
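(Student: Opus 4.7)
The plan is to carry out Stone's classical construction. Given a bounded distributive lattice $D$, let $X$ be the set of prime filters of $D$ (proper filters $F$ such that $a\vee b\in F$ implies $a\in F$ or $b\in F$). For each $a\in D$ set $\hat a=\{F\in X\mid a\in F\}$, and topologize $X$ by declaring $\mathcal{B}=\{\hat a\mid a\in D\}$ a basis. The goal is to show (i) $X$ is a spectral space, (ii) $\mathcal{CO}(X)=\mathcal{B}$, and (iii) the map $a\mapsto\hat a$ is a lattice isomorphism $D\to\mathcal{CO}(X)$.

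First I would verify the algebraic identities $\widehat{a\wedge b}=\hat a\cap\hat b$, $\widehat{a\vee b}=\hat a\cup\hat b$, $\hat 0=\emptyset$, $\hat 1=X$: the $\wedge$-identity uses that filters are closed under meets, while the $\vee$-identity uses primeness in both directions. This simultaneously confirms that $\mathcal{B}$ is closed under finite intersection (hence a genuine basis) and that $a\mapsto\hat a$ is a lattice homomorphism. Injectivity, and therefore the $T_0$ separation axiom, follows from the Prime Ideal Theorem: if $a\not\le b$ in $D$, then the filter $\mathord{\uparrow}a$ is disjoint from the ideal $\mathord{\downarrow}b$, so a prime filter extending $\mathord{\uparrow}a$ and missing $b$ exists, separating $\hat a$ from $\hat b$.

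Next I would establish the topological properties. For compactness, apply Alexander's Subbase Theorem to $\mathcal{B}$: if $X=\bigcup_{i\in I}\widehat{a_i}$ has no finite subcover, then for every finite $J\subseteq I$ the meet $\bigwedge_{i\in J}a_i^{\perp}$ (complement in the ideal lattice sense, i.e.\ the ideal generated by $\{a_i\}_{i\in I}$) is nonempty relative to a suitable filter, and the Prime Ideal Theorem produces a prime filter disjoint from $\{a_i\mid i\in I\}$, contradicting the cover. Coherence reduces to showing $\mathcal{CO}(X)=\mathcal{B}$: each $\hat a$ is compact by the same argument, and any compact open is a finite union of basic opens, hence of the form $\widehat{a_1\vee\cdots\vee a_n}$. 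Finally, sobriety: a completely prime filter $\mathcal{F}\subseteq\mathcal{O}(X)$ pulls back to a prime filter $F=\{a\in D\mid\hat a\in\mathcal{F}\}$ of $D$, and one checks $\mathcal{F}=\mathcal{O}_X(F)$ using that every open is a union of basic opens together with complete primeness.

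The main obstacle is that essentially every substantive step in this argument depends on the Prime Ideal Theorem (equivalently, on Alexander's Subbase Theorem): injectivity of $a\mapsto\hat a$, surjectivity onto $\mathcal{CO}(X)$, compactness of $X$, and nonemptiness of $X$ itself all route through it. Since the theorem is stated here as classical motivation, this dependence is acceptable, but it is precisely the feature the remainder of the paper is designed to circumvent by replacing prime filters with arbitrary proper filters in the representing space. A secondary subtlety worth double-checking is that the representation respects the bounds $0$ and $1$; this comes for free from $\hat 0=\emptyset$ and $\hat 1=X$ in the presence of properness of prime filters.
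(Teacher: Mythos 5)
The paper does not prove this statement: it is quoted as a classical theorem of Stone and serves only as motivation, so there is no in-paper argument to compare against. Your proposal is the standard classical proof via the prime-filter spectrum and is correct in outline: the identities $\widehat{a\wedge b}=\hat a\cap\hat b$ and $\widehat{a\vee b}=\hat a\cup\hat b$, compactness of each $\hat a$, the identification $\mathcal{CO}(X)=\{\hat a\mid a\in D\}$ via finite unions and joins, and sobriety by pulling a completely prime filter of opens back to a prime filter of $D$ are exactly the required steps. Two small slips are worth fixing. First, the $T_0$ axiom does not depend on the Prime Ideal Theorem: two distinct prime filters already differ on some $a\in D$, so $\hat a$ separates them; PIT is needed for injectivity of $a\mapsto\hat a$, for compactness, and for nonemptiness of $X$, and these should not be conflated. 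Second, the phrase about the meet $\bigwedge_{i\in J}a_i^{\perp}$ is not meaningful in a general distributive lattice, which carries no complementation; the correct statement is that the absence of a finite subcover forces every finite join $a_{i_1}\vee\dots\vee a_{i_n}$ to differ from $1$, so the ideal generated by $\{a_i\}_{i\in I}$ is proper and PIT supplies a prime filter disjoint from it, contradicting the cover. Your closing remark is precisely the point the paper intends by citing this result: the argument is saturated with PIT, and the paper's own constructions (Proposition \ref{lattice to space} and Theorem \ref{representation theorem} for ortholattices, following the Boolean case in \cite{bezhanishvili}) replace prime filters by arbitrary proper filters exactly to eliminate that dependence, at the cost of representing the lattice not by all of $\mathcal{CO}(X)$ but by a distinguished subfamily of it.
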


\begin{theorem}[Bezhanishvili and Holliday \cite{bezhanishvili}] Every Boolean algebra can be represented (up to isomorphism) as $\mathcal{CO}\mathrm{RO}(X)$ for some spectral space $X$. 

\end{theorem}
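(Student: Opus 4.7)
The plan is to take $X$ to be the set $\mathrm{Filt}^{\ast}(B)$ of all proper filters of $B$, topologized by the basis $\{\widehat{a} : a \in B\}$ where $\widehat{a} := \{F \in X : a \in F\}$. Because $\widehat{a}\cap\widehat{b} = \widehat{a\wedge b}$ this is a genuine basis, with $\widehat{1}=X$ and $\widehat{0}=\emptyset$ (since proper filters never contain $0$). The specialization preorder on $X$ coincides with filter inclusion, so the Alexandroff refinement $\mathcal{UP}(X,\leqslant)$ is the topology in which every $\subseteq$-upward closed subset of $\mathrm{Filt}^{\ast}(B)$ is declared open. The key design feature is that only the lattice of proper filters is used; filters are never extended to ultrafilters, so no choice principle has any opportunity to enter.

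The first block of the proof is to check the four clauses of Definition \ref{spectral space}. The $T_0$ axiom is immediate, since two distinct proper filters disagree on some $a \in B$ and are then separated by $\widehat{a}$. Coherence is immediate from the basis identity $\widehat{a}\cap\widehat{b}=\widehat{a\wedge b}$ together with compactness of each $\widehat{a}$, which I would establish by the following choice-free argument: if $a\neq 0$, the principal filter $\mathord{\uparrow} a$ is itself a member of $\widehat{a}$, so any basic cover of $\widehat{a}$ must contain some $\widehat{b}$ with $a\leq b$, and then $\widehat{b}\supseteq\widehat{a}$ already suffices. The compactness of $X=\widehat{1}$ is the special case $a=1$. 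Sobriety is obtained by checking that the map sending $F\in X$ to the completely prime filter $\{U\in\mathcal{O}(X):F\in U\}$ is a bijection onto the completely prime filters of $\mathcal{O}(X)$, with inverse $\mathcal{F}\mapsto\{a\in B:\widehat{a}\in\mathcal{F}\}$.

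Next, I would define $\eta:B\to\mathcal{CO}\mathrm{RO}(X)$ by $\eta(a)=\widehat{a}$. Each $\widehat{a}$ is already known to be compact open, and regular-openness in $\mathcal{UP}(X,\leqslant)$ unpacks as the condition that whenever every $G\supseteq F$ admits some $H\supseteq G$ with $a\in H$, one already has $a\in F$; this reduces to a direct computation in $B$ exploiting the Boolean complement of $a$. Preservation of meets is trivial, preservation of complement follows from the explicit pseudocomplement formula for regular opens in $\mathcal{UP}(X,\leqslant)$, and preservation of joins then falls out. Injectivity is clear using principal filters to separate distinct elements. The step I expect to be the main obstacle is \emph{surjectivity}: given $U\in\mathcal{CO}\mathrm{RO}(X)$, compactness and coherence yield $U=\widehat{a_1}\cup\cdots\cup\widehat{a_n}$ for some $a_i\in B$, and one must then show that the regular-open condition forces this finite union to coincide with $\widehat{a_1\vee\cdots\vee a_n}$. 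Harmonising the interplay between the spectral topology, the finer Alexandroff topology on $(X,\leqslant)$, and the Boolean structure of $B$, while staying strictly within ZF, is the delicate core of the argument.
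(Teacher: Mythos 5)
Your proposal is correct and follows essentially the same route as the source: the paper itself only cites this theorem from \cite{bezhanishvili}, but your construction (the spectral space of proper filters with basis $\{\widehat{a}\}$, the principal-filter compactness argument, and the map $a\mapsto\widehat{a}$ onto the compact open regular opens) is exactly the Bezhanishvili--Holliday argument and mirrors step for step the paper's own Proposition \ref{lattice to space} and Theorem \ref{representation theorem} for ortholattices, with regular-openness in $\mathcal{UP}(X,\leqslant)$ playing the role of $\perp$-regularity. The surjectivity step you flag as delicate does go through: if $F\in\widehat{a_1\vee\cdots\vee a_n}$ and $G\supseteq F$, not every $a_i$ can be inconsistent with $G$ (else $\neg(a_1\vee\cdots\vee a_n)\in G$), so some $H\supseteq G$ meets $\bigcup_i\widehat{a_i}$, and regular-openness then gives $\widehat{a_1\vee\cdots\vee a_n}\subseteq U$.
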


 Our representation theorem for ortholattices (in Theorem \ref{representation theorem}) is very much an analogue of the above representation theorem for Boolean algebras. 
\begin{definition}\label{spectral and patch topology}
 Let $L$ be an ortholattice, let $\mathfrak{F}(L)$ be the collection of all proper lattice filters of $L$, and define $\widehat{a}=\{x\in\mathfrak{F}(L)\mid a\in x\}$. Moreover, let $\perp_L\subseteq\mathfrak{F}(L)\times\mathfrak{F}(L)$ be the orthogonality relation defined by: \[x\perp_Ly\Longleftrightarrow\exists a\in L:a^{\perp}\in x\hspace{.1cm}\&\hspace{.1cm}a\in y.\] Then, we define the following topological spaces: 
 \begin{enumerate}
     \item $X^+_L=(X^+_L,\perp_L)$ is the space of proper lattice filters of $L$ whose topology is generated by $\{\widehat{a}\mid a\in L\}$, known as the \emph{spectral topology} over $X_L^+$.
 \item  $X_L^{\pm}=(X^{\pm}_L,\perp_L)$ is the space of proper lattice filters of $L$ whose topology is generated by $\{\widehat{a}\mid a\in L\}\cup\{\complement\widehat{a}\mid a\in L\}$ (where $\complement$ is the set-theoretic complement operator) known as the \emph{patch topology} over $X_L^{\pm}$.
 \end{enumerate}
 \end{definition}
 Note that $\widehat{a}\cap\widehat{b}=\widehat{a\wedge b}$ and so the subbasis $\{\widehat{a}\mid a\in L\}$ of the spectral topology for the space $X_L^+$ is closed under binary intersections. Moreover, note that since $\perp_L$ is an orthogonality relation over $\mathfrak{F}(L)$, $\perp_L$ is symmetric so for $x,y\in\mathfrak{F}(L)$, we can alternatively define $x\perp_Ly$ if and only if there exists some $a\in L$ such that $a\in x$ and $a^{\perp}\in y$. 

\subsection{The Stone space of an ortholattice}
Assuming Alexander's Subbase Theorem, it was shown in \cite{goldblatt1} that the space $X_L^{\pm}$ with its associated patch topology is a Stone space.
As demonstrated in the following proposition,
the use of some choice principle in this claim is essential.
\begin{proposition}\label{choice}
  The following are equivalent:
  \begin{enumerate}
  \item\label{choice1} PIT, the Prime Ideal Theorem for Boolean algebras.
  \item\label{choice2} The space $\xpm_L$ is compact for all Boolean algebras $L$.
  \end{enumerate}
\end{proposition}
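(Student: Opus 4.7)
For the forward direction ($\ref{choice1}\Rightarrow\ref{choice2}$), the plan is to realize $\xpm_L$ as a closed subspace of the Cantor cube $\{0,1\}^L$ (with the product of discrete topologies) via the characteristic function map $F\mapsto\chi_F$, and then appeal to the well-known equivalence between PIT and Tychonoff's theorem for arbitrary powers of the two-point discrete space. Under this embedding, the patch subbasics $\widehat{a}$ and $\complement\widehat{a}$ match the projection subbase $\{\chi:\chi(a)=1\}$ and $\{\chi:\chi(a)=0\}$ on $\{0,1\}^L$, so the patch topology agrees with the subspace topology. One then checks that being a proper filter of $L$ is captured by the clopen conditions $\chi(0)=0$, $\chi(1)=1$, and $\chi(a\wedge b)=\chi(a)\wedge\chi(b)$ for all $a,b\in L$ (upward closure being a consequence of meet-preservation), so the image of $\xpm_L$ is closed. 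Under PIT the space $\{0,1\}^L$ is compact, and compactness transfers to any closed subspace.

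For the reverse direction ($\ref{choice2}\Rightarrow\ref{choice1}$), the plan is to derive PIT in its extension form: every proper filter of a Boolean algebra extends to an ultrafilter. Fix a Boolean algebra $L$ and a proper filter $F_0\subseteq L$. I would consider the family of closed subsets $\{\widehat{f}:f\in F_0\}\cup\{\widehat{a}\cup\widehat{\lnot a}:a\in L\}$ of $\xpm_L$, noting that each $\widehat{a}$ is in fact patch-clopen because both $\widehat{a}$ and $\complement\widehat{a}$ are subbasic open. A proper filter lying in every member of this family is precisely an ultrafilter of $L$ extending $F_0$, so by compactness it suffices to verify the finite intersection property. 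Given $f_1,\dots,f_k\in F_0$ and $a_1,\dots,a_n\in L$, the element $f:=f_1\wedge\cdots\wedge f_k$ is nonzero (since $F_0$ is proper), and the Boolean identity $f=\bigvee_{\epsilon\in\{0,1\}^n} f\wedge a_1^{\epsilon_1}\wedge\cdots\wedge a_n^{\epsilon_n}$ (with $a^1=a$ and $a^0=\lnot a$) forces some sign vector $\epsilon^*$ to yield a nonzero element; its principal filter lies in the prescribed finite intersection.

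The main obstacle is keeping the reverse direction genuinely choice-free: a natural but circular approach to the FIP step would extend some partially specified filter to an ultrafilter on a subalgebra of $L$, which is essentially what is being proved. The principal-filter construction avoids this by using only the purely algebraic identity $\bigvee_{\epsilon} a_1^{\epsilon_1}\wedge\cdots\wedge a_n^{\epsilon_n}=1$ together with the distributivity of $\wedge$ over $\vee$, both of which hold in any Boolean algebra without appeal to choice. The forward direction is routine once the Cantor-cube embedding is set up, the only nontrivial input being the classical equivalence of PIT with Tychonoff for products of the two-point discrete space.
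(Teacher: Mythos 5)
Your proposal is correct, and both of its halves take a genuinely different route from the paper's. For (\ref{choice1})$\Rightarrow$(\ref{choice2}) the paper simply observes that Goldblatt's compactness proof uses only Alexander's Subbase Theorem, which is equivalent to PIT; your Cantor-cube argument (embedding $\xpm_L$ as the closed subset of $\{0,1\}^L$ cut out by the clopen conditions $\chi(0)=0$, $\chi(1)=1$, $\chi(a\wedge b)=\chi(a)\wedge\chi(b)$, and invoking the equivalence of PIT with compactness of $\{0,1\}^I$) is a self-contained alternative and is sound, since the patch subbasis matches the projection subbasis and closed subspaces of compact spaces are compact without choice. The more substantial divergence is in (\ref{choice2})$\Rightarrow$(\ref{choice1}): the paper reduces PIT, via Howard's characterization, to the existence of choice functions for families of nonempty finite sets together with the Weak Rado Selection Lemma, and verifies each by building auxiliary Boolean algebras by presentation and extracting cluster points of filter bases and nets in their patch spaces. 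You instead derive the Ultrafilter Extension Theorem directly for the given algebra $L$: the family $\{\widehat{f}\mid f\in F_0\}\cup\{\widehat{a}\cup\widehat{\lnot a}\mid a\in L\}$ consists of patch-closed sets, the finite intersection property is witnessed by a principal filter $\mathop\uparrow\bigl(f\wedge a_1^{\epsilon_1}\wedge\cdots\wedge a_n^{\epsilon_n}\bigr)$ obtained from the Boolean partition-of-unity identity (a single finite, hence choice-free, selection), and compactness in its FIP form yields an ultrafilter extending $F_0$. Your route is shorter and stays entirely inside the algebra $L$ under consideration, at the cost of invoking the standard (choice-free) equivalence of PIT with its filter-extension form; the paper's route is longer but exhibits explicitly which combinatorial fragments of PIT the compactness of the patch spaces encodes.
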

\begin{proof}
  \newcommand{\subseteqfin}{\subseteq_{\mathrm{fin}}} To see that Condition \ref{choice}.\ref{choice1} implies Condition \ref{choice}.\ref{choice2}, note that the
  PIT proves the compactness of $\xpm_L$ for any Boolean algebra $L$
  as the only choice principle used in Goldblatt~\cite{goldblatt1} Alexander's Subbase Theorem,
  which is equivalent to PIT.

  To see that Condition~\ref{choice}.\ref{choice2} implies Condition~\ref{choice}.\ref{choice1}, assume that $\xpm_L$ is compact for all Boolean algebras $L$.
  To show PIT,
  it suffices \cite[Theorem 1]{howard}
  to prove: (1) the existence of a choice function for
  an arbitrary family of nonempty \emph{finite} sets, and (2) the Weak Rado Selection Lemma (whose statement can be found below).

  For the proof of the first statement, let $\mathcal S := (S_i)_{i \in I}$ be a family of nonempty finite sets.
  Let $L$ be the Boolean algebra presented by
  \[\langle \bigsqcup_{i \in I} S_i \mid \{a \wedge b = 0 \mid a \neq b \in S_i, i \in I\} \rangle.\]
  Consider $\xpm_L$.
  For $I' \subseteqfin I$,
  let $F_{I'} = \{ u \in \xpm_L \mid \zenbu{i \in I'} \aru{a \in S_i} a \in u\}$.
  It can be shown that $\mathcal F := (F_{I'})_{I' \in \powerfin(I)}$ is a filter basis
  of $\xpm_L$.
  Since $\xpm_L$ is compact,
  $\mathcal F$ has a cluster point $u^+$.
  We show that $f := \{(i, a) \mid i \in I, a \in S_i, a \in u^+\}$
  is a choice function for $\mathcal S$.
  Since $u^+$ is a proper filter of $L$,
  at most one $a \in S_i$ can belong to $u^+$ by the construction of $L$.
  This shows that $f$ is a function.
  We now show that $\dom f = I$.
  Let $i \in I$ be arbitrary.
  Suppose by way of contradiction that $S_i \cap u^+ = \0$.
  Then $\complement \widehat a$ is a neighborhood of $u^+$ for $a \in S_i$,
  and so is $U := \bigcap_{a \in S_i} \complement \widehat a$,
  which is open as $S_i$ is finite.
  Since $u^+$ is a cluster point, $U \cap F_{\{i\}}$ is nonempty,
  i.e., $\zenbu{a \in S_i} \aru{ u \in F_{\{i\}}} a \not \in u$,
  contradicting the definition of $F_{\{i\}}$.

  For the proof of the second statement, we will prove the Weak Rado Selection Lemma by showing the following: Suppose that for a set $\Lambda$ there is
    a family of functions $(\gamma_S)_{S \in \powerfin (\Lambda)}$
    such that $\gamma_S : S \to \{\pm 1\}$.
    Then there is $f : \Lambda \to \{\pm 1\}$ such that
    for all $S \subseteqfin \lambda$ there exists $T \subseteq \Lambda$
    with $S \subseteq T$ and $f \upharpoonright S = \gamma_T \upharpoonright S$.

  To that end, let $(\gamma_S)_S$ be given.
  Let a Boolean algebra $L$ be defined by using presentation:
  $L = \langle \lambda^+, \lambda^- \mid \lambda^+ = \neg \lambda^-\rangle_{\lambda \in \Lambda}$,
  where we have generators $\lambda^+, \lambda^-$
  corresponding to each $\lambda \in \Lambda$.
  For $S \subseteqfin \lambda$, let $u_S$ be the filter of $L$ generated by
  $\{\lambda^\pm \mid \lambda \in \Lambda, \gamma_S(\lambda) = \pm 1\}$.
  It can be shown that $u_S$ is proper so $u_S \in \xpm_L$.
  Consider the net $(u_S)_{S \in \powerfin(\Lambda)}$,
  where the indices are ordered by inclusion.
  Since $\xpm_L$ is compact,
  the net has a cluster point $u_\infty$.
  Now we have
  \newcommand{\thequantifiers}{\zenbu{\lambda \in \Lambda} \zenbu{S \subseteqfin \Lambda}\aru{T \supseteq S}}
  \[
    \thequantifiers
%    \begin{cases}
    [
      u_\infty \in \widehat{\lambda^\pm} \Rightarrow u_T \in \widehat{\lambda^\pm}
      % &
      \text{ and }
      % \\
      u_\infty \in \complement\widehat{\lambda^\pm} \Rightarrow u_T \in \complement\widehat{\lambda^\pm}
      % \end{cases}
      ],
    \]
    i.e.,
    \begin{equation}\label{eqchoice}
      \thequantifiers [\lambda^\pm \in u_\infty \iff \lambda^\pm \in u_T].
    \end{equation}
    Let $f = \{(\lambda, \pm 1) \mid \lambda^\pm \in u_\infty\}$.
    By a similar argument as before, $f$ is a function $\Lambda \to \{\pm 1\}$.
    Also, by \ref{eqchoice}, $\zenbu{S \subseteqfin \Lambda}\aru{T \supseteq S}
    f \upharpoonright T = \gamma_T$
    (\emph{a fortiori}, $f \upharpoonright S = \gamma_T \upharpoonright S$).
\end{proof}
\subsection{The representation theorem}
In contrast, we will demonstrate independently of Alexander's Subbase Theorem (along with its associated nonconstructive choice-principles) that the space $X_L^+$ with its associated spectral topology is a spectral space that represents (up to isomorphism) the original ortholattice $L$. We first verify that for every ortholattice $L$, the space $X_L^+$ gives rise to a spectral space. 

\begin{proposition}\label{lattice to space}
For every ortholattice $L$, the space $X_L^+$ is a spectral space whose specialization order $\leqslant$ is given by set-theoretic inclusion. 
 
\end{proposition}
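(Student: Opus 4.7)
The plan is to verify the four clauses of Definition~\ref{spectral space} together with the claim about the specialization order directly from the topology on $X_L^+$, exploiting two elementary facts: that $\widehat{a} \cap \widehat{b} = \widehat{a \wedge b}$, and that the singleton $\{1\}$ is the $\subseteq$-least proper filter of $L$.

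First I would observe that $X_L^+$ is $T_0$: distinct proper filters $x \neq y$ differ by some $a \in L$, which separates them via $\widehat{a}$. The specialization order then unfolds as $x \leqslant y$ iff $(\forall a \in L)(a \in x \Rightarrow a \in y)$ iff $x \subseteq y$, since open sets are unions of basic opens $\widehat{a}$.

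Next I would prove compactness in the strong form that every open cover of $X_L^+$ admits a singleton subcover. The filter $\{1\}$ is a point of $X_L^+$, and $\{1\} \in \widehat{a}$ forces $a = 1$, so the only basic open containing $\{1\}$ is $\widehat{1} = X_L^+$; hence the only open containing $\{1\}$ at all is $X_L^+$ itself, and any open cover must list $X_L^+$ among its members. The same trick shows each basic open $\widehat{a}$ is compact: for $a \neq 0$, the principal filter $\uparrow a$ is the $\subseteq$-minimum of $\widehat{a}$, and any basic $\widehat{b}$ containing $\uparrow a$ satisfies $a \leq b$, whence $\widehat{b} \supseteq \widehat{a}$ by upward closure of filters. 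Combined with $\widehat{a} \cap \widehat{b} = \widehat{a \wedge b}$, this yields coherence: $\mathcal{CO}(X_L^+)$ consists exactly of the finite unions of basic opens, which contains the basis $\{\widehat{a} \mid a \in L\}$ and is closed under finite intersections by distribution.

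The main obstacle is sobriety. Given a non-empty irreducible closed set $C \subseteq X_L^+$, I would take $x^* := \bigcup_{x \in C} x$ and argue that $C = \overline{\{x^*\}}$. Upward closure of $x^*$ is automatic; closure under binary meets uses irreducibility, for if $a, b \in x^*$ then $\widehat{a} \cap C$ and $\widehat{b} \cap C$ are non-empty, so irreducibility forces $\widehat{a \wedge b} \cap C = \widehat{a} \cap \widehat{b} \cap C$ to be non-empty, whence $a \wedge b \in x^*$. Since no $x \in C$ contains $0$, neither does $x^*$, so $x^* \in \mathfrak{F}(L)$. Writing $C = \{y \in \mathfrak{F}(L) \mid y \cap S = \emptyset\}$ for some $S \subseteq L$ with $\complement C = \bigcup_{a \in S} \widehat{a}$, one sees $x^* \cap S = \emptyset$ since $x^*$ is a union of filters disjoint from $S$, placing $x^* \in C$; and $C = \downarrow x^*$ since any proper filter contained in $x^*$ inherits disjointness from $S$. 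As $\overline{\{x^*\}} = \downarrow x^*$ in the specialization order, this gives $C = \overline{\{x^*\}}$, with uniqueness of $x^*$ immediate from $T_0$.
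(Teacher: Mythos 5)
Your proposal is correct, and for the $T_0$ property, compactness (via the $\subseteq$-minimum $\uparrow\hspace{-.1cm}a$ of $\widehat{a}$ forcing a one-element subcover), coherence, and the identification of $\leqslant$ with $\subseteq$, it runs essentially parallel to the paper's proof; the observation that the trivial filter $\{1\}$ lies in no basic open other than $\widehat{1}$ is a pleasant shortcut but not a real divergence. Where you genuinely part ways is sobriety. The paper works with the formulation it officially adopts in Definition \ref{spectral space} --- every completely prime filter $x_p$ of $\mathcal{O}(X_L^+)$ has the form $\mathcal{O}_{X_L^+}(x)$ --- and manufactures the point $x$ as the filter generated by $\{a\in L\mid \widehat{a}\in x_p\}$. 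You instead use the irreducible-closed-set formulation and build the generic point of $C$ as the literal union $x^*=\bigcup_{x\in C}x$, with irreducibility doing exactly the work needed to close $x^*$ under binary meets via $\widehat{a}\cap\widehat{b}=\widehat{a\wedge b}$, and with the representation $\complement C=\bigcup_{a\in S}\widehat{a}$ giving both $x^*\in C$ and $C=\mathord{\downarrow}x^*=\overline{\{x^*\}}$. Your route is arguably more self-contained on the order-theoretic side (the filter axioms for $x^*$ fall out of the topology of $C$ rather than from an abstract generation step, and you get the down-set description of point closures for free), while the paper's route matches its stated definition verbatim and avoids any talk of irreducible sets. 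The one thing you should add is a sentence noting that the two formulations of sobriety are equivalent --- and, importantly for this paper, that the equivalence is itself choice-free (the completely prime filter $F$ corresponds to the irreducible closed set $X\setminus\bigcup\{U\in\mathcal{O}(X)\mid U\notin F\}$ and conversely $C\mapsto\{U\mid U\cap C\neq\emptyset\}$, with no selection principle involved) --- since otherwise you have verified a condition other than the one in Definition \ref{spectral space}.
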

\begin{proof}
To see that $X_L^+$ is a $T_0$ space, assume that $x,y\in X_L^+$ are such that $x\not=y$. If we then suppose without loss of generality that $a\in x\setminus y$, then $a\in x$ and $a\not\in y$ which implies that $x\in\widehat{a}$ and $y\not\in\widehat{a}$ where $\widehat{a}\in\mathcal{O}(X_L^+)$.

We now show that in fact $\widehat a$ is compact for each $a \in L$,
whence $\widehat 1 = X_L^+$ is also compact.
Since by definition of the space $X_L^+$, sets of the form $\widehat{a}$ are a basis for $X_L^+$,  it suffices to show that if $\widehat{a}\subseteq\bigcup_{i\in I}\widehat{b}_i$, then there exists a finite subcover. With that in mind, assume that $\widehat{a}\subseteq\bigcup_{i\in I}\widehat{b}_i$, then the principal filter $\uparrow \hspace{-.1cm}a=\{b\in L\mid a\leq b\}$ contains one of the $b_is$, which by the definition of principal filters implies that $a\leq b_i$ which means $\widehat{a}\subseteq \widehat{b}_i$, so $b_i$ is itself a finite subcover. 

To see that $X_L^+$ is a coherent space, first observe that by definition of $X_L^+$, it immediately follows that $\mathcal{CO}(X_L^+)$ forms a basis. To show that $\mathcal{CO}(X_L^+)$ is closed under binary intersections, let $U,V\in\mathcal{CO}(X_L^+)$. Then, observe that for finite index sets $I$ and $K$, we have $U=\bigcup_{i\in I}\widehat{a_i}$ and $V=\bigcup_{k\in K}\widehat{b_k}$ so \[U\cap V=\bigcup_{i\in I,k\in K}(\widehat{a_i}\cap\widehat{b_k})=\bigcup_{i\in I,k\in K}\widehat{a_i\wedge b_k}\] and thus $U\cap V$ is a finite union of compact open sets and therefore we have $U\cap V\in\mathcal{CO}(X^+_L)$. 

To show that $X_L^+$ is a sober space, it will be sufficient to show that every completely prime filter $x_p\subseteq\mathcal{O}(X_L^+)$ is of the form \[\mathcal{O}_{X_L^+}(x)=\{U\in\mathcal{O}(X_L^+)\mid x\in U\}\]
for some $x \in X_L^+ $. 
Hence, let $x$ be the filter in $L$ generated by the set $\{a\in L\mid \widehat{a}\in x_p\}$. 
The equality $x_p=\mathcal{O}_{X_L^+}(x)$ is can be seen by first observing that the inclusion $\mathcal{O}_{X_L^+}(x)\subseteq x_p$ is immediate by the definition of $x$ and $x_p$ being a filter. For the converse inclusion $x_p\subseteq\mathcal{O}_{X_L^+}(x)$, assume that $\bigcup_{i\in I}\widehat{a}_i\in x_p$. Since by hypothesis, $x_p$ is a completely prime filter, there exists some $a_i$ such that $\widehat{a}_i\in x_p$, which means that $a_i\in x$, hence $x\in\widehat{a}_i$. Therefore, we have that $\widehat{a}_i\in\mathcal{O}_{X_L^+}(x)$ so in particular, we have $\bigcup_{i\in I}\widehat{a}_i\in\mathcal{O}_{X_L^+}(x)$. Therefore, $X_L^+$ is a spectral space. 

Finally, note that since $X_L^+$ is a $T_0$ space, we have that for $x,y\in X_L^+$, $x\not\subseteq y$ implies that $x\not\leqslant y$. For the converse direction, suppose $x\subseteq y$. Then for each basic open $\widehat{a}$, if $x\in\widehat{a}$ i.e., $a\in x$, then $a\in y$ i.e., $y\in\widehat{a}$, which implies that $x\leqslant y$.   
\end{proof}

 Now that we have seen that given an ortholattice $L$, spaces of the form $X_L^+$ form a subclass of spectral spaces, we proceed to the promised choice-free representation theorem for ortholattices. 
\begin{theorem}\label{representation theorem}
Given an ortholattice $L$, the map $\widehat{\bullet}\colon L\to\mathcal{COR}(X_L^+)$ is an isomorphism where $\mathcal{COR}(X_L^+)$ is an ortholattice ordered by set-theoretic inclusion, whose operation for meet is $\cap$, whose operation for orthocomplement is $^*$, and whose bottom universal bound is $\emptyset$. 
\end{theorem}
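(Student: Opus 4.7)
The approach is to isolate a single key identity, $\widehat{a}^{\,*}=\widehat{a^{\perp}}$, and derive everything else (orthoregularity of each $\widehat{a}$, preservation of orthocomplements, and surjectivity) as consequences. With that identity in hand, the proof breaks into: well-definedness (the map lands in $\mathcal{COR}(X_L^+)$), preservation of the ortholattice operations, injectivity, and surjectivity; ortholattice structure on $\mathcal{COR}(X_L^+)$ is then transported across the bijection.

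The key identity is established as follows. If $a^{\perp}\in x$ and $y\in\widehat{a}$ (so $a\in y$), then $a$ itself witnesses $x\perp_L y$, giving the inclusion $\widehat{a^{\perp}}\subseteq\widehat{a}^{\,*}$. For the converse, assume $x\in\widehat{a}^{\,*}$ and $a\neq 0$. The principal filter ${\uparrow}a=\{b\in L\mid a\leq b\}$ is a proper filter (since $a\neq 0$ means $0\notin{\uparrow}a$) lying in $\widehat{a}$, so by hypothesis $x\perp_L{\uparrow}a$; hence there exists $b\geq a$ with $b^{\perp}\in x$, and since $b^{\perp}\leq a^{\perp}$ the upward closure of $x$ yields $a^{\perp}\in x$. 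The edge case $a=0$ is handled by observing $\widehat{0}=\emptyset$ (proper filters exclude $0$) and $\widehat{1}=X_L^+$. Applying the identity twice and using $a^{\perp\perp}=a$ gives $\widehat{a}^{\,**}=\widehat{a^{\perp\perp}}=\widehat{a}$, so each $\widehat{a}$ is orthoregular and the map is indeed into $\mathcal{COR}(X_L^+)$.

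Preservation of meet is the already-noted $\widehat{a\wedge b}=\widehat{a}\cap\widehat{b}$; preservation of orthocomplement is the key lemma; and $\widehat{0}=\emptyset$ is the bottom. Injectivity follows by the same principal-filter trick: if $\widehat{a}=\widehat{b}$, then ${\uparrow}a\in\widehat{a}=\widehat{b}$ forces $b\leq a$, and symmetrically $a\leq b$, so $a=b$. For surjectivity, let $U\in\mathcal{COR}(X_L^+)$. Since $\{\widehat{a}\mid a\in L\}$ is a basis and $U$ is compact and open, $U=\widehat{a}_1\cup\cdots\cup\widehat{a}_n$ for some finite family. Using that $(\bigcup_i Y_i)^{*}=\bigcap_i Y_i^{*}$ together with the key lemma and coherence of $X_L^+$, one computes
\[
U^{\,*}=\bigcap_{i=1}^{n}\widehat{a}_i^{\,*}=\bigcap_{i=1}^{n}\widehat{a_i^{\perp}}=\widehat{a_1^{\perp}\wedge\cdots\wedge a_n^{\perp}},
\]
and orthoregularity of $U$ together with one more application of the lemma and De Morgan gives
\[
U=U^{\,**}=\widehat{(a_1^{\perp}\wedge\cdots\wedge a_n^{\perp})^{\perp}}=\widehat{a_1\vee\cdots\vee a_n},
\]
so $U$ is in the image.

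Finally, since $\widehat{\bullet}$ is a bijection from $L$ onto $\mathcal{COR}(X_L^+)$ that intertwines $\wedge$, ${}^{\perp}$, and $0$ with $\cap$, ${}^{*}$, and $\emptyset$, the codomain inherits an ortholattice structure from $L$ (with set-inclusion as its order, reflecting the specialization order on $X_L^+$ as established in Proposition \ref{lattice to space}), and $\widehat{\bullet}$ is by construction an ortholattice isomorphism. The main obstacle is packaged entirely into the key lemma of the second paragraph: once $\widehat{a}^{\,*}=\widehat{a^{\perp}}$ is in place, orthoregularity, preservation of complements, and the surjectivity computation all fall out. The subtle technical point to handle carefully there is the need for the principal filter ${\uparrow}a$ to be proper, which forces a separate treatment of $a=0$.
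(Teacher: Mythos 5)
Your proposal is correct and takes essentially the same route as the paper: the key identity $\widehat{a}^{\,*}=\widehat{a^{\perp}}$ is established via the principal filter $\uparrow a$, injectivity uses the same principal-filter trick, and surjectivity combines compactness, the basis of basic opens, and orthoregularity exactly as in the paper's computation $U=U^{**}=\widehat{\bigvee_{i} a_i}$. Your explicit handling of the $a=0$ edge case (where $\uparrow 0$ is improper) is a small point the paper glosses over; the only slip is that $\uparrow a\in\widehat{b}$ yields $a\leq b$ rather than $b\leq a$, which does not affect the conclusion $a=b$.
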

 \begin{proof}
 We first show that the mapping $\widehat{\bullet}$ is an ortholattice homomorphism $L \to \mathcal R(X_L^+)$,
 where the codomain is known to be an ortholattice under the operations $\cap$, ${}^*$, and $\emptyset$
 \cite[Proposition 1]{goldblatt1}. We first check that $\widehat{\bullet}$ preserves meets by demonstrating $\widehat{a\wedge b}=\widehat{a}\cap\widehat{b}$. For the $\widehat{a\wedge b}\subseteq\widehat{a}\cap\widehat{b}$ inclusion, assume that $x\in\widehat{a\wedge b}$ so that $a\wedge b\in x$. Then, since $a\wedge b\leq a$ and $a\wedge b\leq b$, we have that $a\in x$ and $b\in x$ as $x$ is a filter. Hence, we find $x\in\widehat{a}$ and $x\in\widehat{b}$, so $x\in\widehat{a}\cap\widehat{b}$. For the $\widehat{a}\cap\widehat{b}\subseteq\widehat{a\wedge b}$ inclusion, assume that $x\in\widehat{a}\cap\widehat{b}$. Then, $x\in\widehat{a}$ and $x\in\widehat{b}$ so $a\in x$ and $b\in x$. Since $x$ is a filter, we find that $a\wedge b\in x$ and so $x\in\widehat{a\wedge b}$, as required. Hence, the function $\widehat{\bullet}$ is a homomorphism for $\wedge$.

We now verify that $\widehat{\bullet}$ preserves orthocomplements by demonstrating $\widehat{a^{\perp}}=(\widehat{a})^*$. For the $\widehat{a^{\perp}}\subseteq(\widehat{a})^*$ inclusion, suppose $x\in\widehat{a^{\perp}}$. Then $a^{\perp}\in x$ which implies that $x\perp_Ly$ for every $y\in\widehat{a}$ so $x\in (\widehat{a})^*$. 
For the inclusion $(\widehat{a})^*\subseteq\widehat{a^{\perp}}$, suppose that $x\in(\widehat{a})^*$ so $x\perp_Ly$ for every $y\in\widehat{a}$.  Let $y= \mathop\uparrow a=\{b\in L\mid b\geq a\}$ be the principal filter generated by $a\in L$. Then, we have $y=\uparrow\hspace{-.1cm}a\in\widehat{a}$ so $x\perp_Ly$. Hence, we have that there exists some $b\in L$ such that $b^{\perp}\in x$ and $b\in y$ i.e., $a\leq b$ which by Condition \ref{lattice2}.\ref{order reversing} implies that $b^{\perp}\leq a^{\perp}$. Therefore, we have that $a^{\perp}\in x$ i.e., $x\in \widehat{a^{\perp}}$, as required. Lastly, note that the equality $\widehat 0=\emptyset$ is obvious. 
 
 To show that $\widehat{\bullet}$ is an injection, let $a,b\in L$ such that $a\not=b$. If $a\not\leq b$, then $\uparrow\hspace{-.1cm}a\in \widehat{a}\setminus\widehat{b}$ which means $\widehat{a}\not=\widehat{b}$. 
 To see that the range of $\widehat\bullet$ is $\mathcal{COR}(X_L^+)$, suppose that $U\in\mathcal{COR}(X_L^+)$. Since $U$ is compact open, we have that
$U=\bigcup^n_{i=1}\widehat{a_i}$ for $a_1,\dots a_n\in L$, that is, $U$ is a finite union of basic opens. Since $U$ is also $\perp$-regular, we calculate  \[\widehat{\bigvee_{i=1}^na_i}=\Big(\bigcup_{i=1}^n\widehat{a_i}\Big)^{**}=U^{**}=U\] so $U$ is in the image of $\widehat{\bullet}$.
Since $\mathcal{COR}$ is the image of an ortholattice homomorphism,
$(\mathcal{COR}(X^+_L),\cap,^*,\emptyset)$ is an ortholattice.
\end{proof}

\section{The dual space of an ortholattice}
In describing topological spaces throughout this work, we will denote a general topological space by $X=(X,\mathcal{T})$ where $X$ is a set and $\mathcal{T}\subseteq\mathcal{P}(X)$ is some topology over $X$. Just as in our discussion of lattices, we will often conflate a topological space with its underlying carrier set. We proceed by characterizing the class of spectral spaces which are homeomorphic to the space $X_{L}^+$ for some ortholattice $L$.

\subsection{UVO-spaces}

The following definition is an analogue of the construction given in \cite{bezhanishvili} of the class of spectral spaces which are homeomorphic to the space $X^+_B$ for some Boolean algebra $B$. 

\begin{definition}\label{uvo space}
Let $X=(X,\leqslant,\bot,\mathcal{T})$ be an ordered topological space endowed with an orthogonal binary relation $\bot\subseteq X^2$ and whose specialization order is $\leqslant$, then $X$ is an \emph{upper Vietoris orthospace} (henceforth, a \textit{UVO-space}) whenever the following conditions are satisfied: 
\begin{enumerate}
\item\label{uvoT0} $X$ is a $T_0$ space;
\item\label{uvo closure} $\mathcal{COR}(X)$ is closed under $\cap$ and $^*$;
\item\label{uvo basis} $\mathcal{COR}(X)$ is a basis for $X$;
    \item\label{uvo proper filter} Every proper filter in $\mathcal{COR}(X)$ is of the form: \[\mathcal{COR}_X(x)=\{U\in\mathcal{COR}(X)\mid x\in U\}\]
    for some $x\in X$; and
    \item\label{uvo perp} $x\perp y\Longrightarrow \exists U\in\mathcal{COR}(X): x\in U\hspace{.1cm}\&\hspace{.1cm}y\in U^*$
\end{enumerate}
\end{definition}
 Note that given a UVO-space $X$, the requirement that $\mathcal{COR}(X)$ form a basis for $X$ implies the following analogue of the Priestly separation axiom: \[x\not\leqslant y\Longrightarrow\exists U\in\mathcal{COR}(X):x\in U\hspace{.1cm}\&\hspace{.1cm} y\not\in U.\]  Notice that if we replace the compact open $\perp$-regular subsets of $X$ by the clopen upsets of $X$, then we arrive exactly at Priestley's seperation for the dual space of a bounded distributive lattice. Moreover, note that the fourth condition is an analogue of the sobriety condition of a spectral space. 
 
 The construction which associates to each UVO-space $X$ an ortholattice $L$ is provided to us by the following lemma. 

\begin{lemma}\label{space to lattice}
If $X$ is a UVO-space, then $L=(\mathcal{COR}(X),\cap,^*,\emptyset)$ is an ortholattice. 
\end{lemma}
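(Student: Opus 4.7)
The plan is to exploit the fact that for any orthospace $(X, \perp)$, the collection $\mathcal{R}(X)$ of all orthoregular subsets of $X$ is already an ortholattice under $\cap$, ${}^{*}$, and $\emptyset$, with top $X$ and join defined by $U \vee V := (U^{*} \cap V^{*})^{*}$. This is classical and is already invoked in the proof of Theorem \ref{representation theorem} via \cite{goldblatt1}. Granting it, the task reduces to checking that $\mathcal{COR}(X)$ is a subalgebra of $\mathcal{R}(X)$ with respect to these operations; the equational content of Definition \ref{lattice2} then descends automatically.

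The concrete steps I would carry out are the following. First, verify that $\emptyset \in \mathcal{COR}(X)$: it is trivially compact and open, and it is orthoregular because $\emptyset^{*} = X$ and $X^{*} = \emptyset$ by irreflexivity of $\perp$, so $\emptyset^{**} = \emptyset$. Second, note that closure of $\mathcal{COR}(X)$ under $\cap$ and under ${}^{*}$ is exactly clause (\ref{uvo closure}) of Definition \ref{uvo space}. Third, deduce from the first two items that $X = \emptyset^{*} \in \mathcal{COR}(X)$ (incidentally forcing $X$ itself to be compact) and that the derived join $(U^{*} \cap V^{*})^{*}$ also lands in $\mathcal{COR}(X)$. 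Finally, read off the three conditions of Definition \ref{lattice2} in this subalgebra: $U \cap U^{*} = \emptyset$ holds by irreflexivity of $\perp$; $U \subseteq V$ implies $V^{*} \subseteq U^{*}$ straight from the definition of ${}^{*}$; and $U^{**} = U$ holds by the very orthoregularity of members of $\mathcal{COR}(X)$.

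I do not expect a genuine obstacle here, since the substance of the lemma has been packaged into clause (\ref{uvo closure}) of the UVO-space definition, which was tailored precisely to force the subalgebra closure needed. The only mildly delicate point will be ensuring that the identities $\emptyset^{*} = X$ and $X^{*} = \emptyset$ hold as set equalities even in the degenerate case where $X$ could be empty, so that $\emptyset$ is bona fide orthoregular; this is trivial but worth flagging at the outset so that the computation of the top element $0^{\perp} = \emptyset^{*} = X$ is unambiguous.
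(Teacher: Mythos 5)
Your proposal is correct and follows essentially the same route as the paper's proof: the paper likewise checks that $\emptyset$ is compact, open, and orthoregular, appeals to Definition \ref{uvo space}.\ref{uvo closure} for closure under $\cap$ and ${}^*$, and then derives the three conditions of Definition \ref{lattice2} from irreflexivity, symmetry, and orthoregularity of $\perp$ respectively. Your explicit packaging of this as a subalgebra of $\mathcal{R}(X)$ is only a cosmetic difference, though it has the small advantage of making explicit why the derived join inherits the lattice axioms.
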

 \begin{proof}Here, we define the joins of $L$ by De Morgan's distribution laws for complements over meets and set $1=\emptyset^*$. We first verify that $\mathcal{COR}(X)$ is closed under the relevant operations. Clearly $\emptyset\in\mathcal{CO}(X)$ and since $\emptyset=\emptyset^{**}$, we have that $\emptyset\in\mathcal{COR}(X)$. By Condition \ref{uvo space}.\ref{uvo closure}, if $U\in\mathcal{COR}(X)$ then $U^*\in\mathcal{COR}(X)$ and if $U,V\in\mathcal{COR}(X)$, then $U\cap V\in\mathcal{COR}(X)$. 
 
 To see that the algebra induced by $\mathcal{COR}(X)$ is an ortholattice, first observe that by the irreflexivity of $\perp$, we have that $U\cap U^*=\emptyset$ for every $U\in\mathcal{COR}(X)$. If on the other hand there was some $y\in U\cap U^*$, then by definition of $U^*$, we would have $y\in\{x\mid \forall y\in U:x\bot y\}$ which contradicts the fact that $\bot$ is irreflexive. Hence Condition \ref{lattice2}.\ref{complement} is satisfied. Given the definition of the $^*$ operator, the symmetry of $\bot$ guarantees that $^*$ is an order-reversing function, so Condition \ref{lattice2}.\ref{order reversing} is satisfied. By the $\perp$-regularity of $\mathcal{COR}(X)$, if $U\in\mathcal{COR}(X)$, then $U=U^{**}$ so Condition \ref{lattice2}.\ref{double complement} is satisfied.    
 \end{proof}
 We now must conversely verify that every ortholattice gives rise to a UVO-space.   
\begin{lemma}\label{lattice to uvo-space}
 If $L$ is an ortholattice, then $X_L^+=(X^+_L,\perp_L)$ is a UVO-space.  
\end{lemma}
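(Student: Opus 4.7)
The plan is to verify each of the five clauses of Definition \ref{uvo space} for the space $X_L^+$, leveraging what has already been established in Proposition \ref{lattice to space} (that $X_L^+$ is a spectral space with set-theoretic inclusion as its specialization order) and Theorem \ref{representation theorem} (that $\widehat\bullet\colon L \to \mathcal{COR}(X_L^+)$ is an ortholattice isomorphism). Three of the clauses fall out immediately: clause \ref{uvoT0} is part of Proposition \ref{lattice to space}; clause \ref{uvo closure} follows because the image of the ortholattice homomorphism $\widehat\bullet$ is automatically closed under $\cap$ and ${}^*$; and clause \ref{uvo basis} holds because the spectral topology on $X_L^+$ is by definition generated by $\{\widehat a \mid a \in L\}$, which equals $\mathcal{COR}(X_L^+)$ by Theorem \ref{representation theorem}.

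For clause \ref{uvo perp}, the plan is a direct unpacking of the orthogonality relation. If $x \perp_L y$, by definition there exists $a \in L$ with $a^\perp \in x$ and $a \in y$. Setting $U := \widehat{a^\perp} \in \mathcal{COR}(X_L^+)$ gives $x \in U$ immediately, and applying the identity $\widehat{b^\perp} = (\widehat b)^*$ established in the proof of Theorem \ref{representation theorem} to $b = a^\perp$, together with the involutivity $a^{\perp\perp} = a$, gives $U^* = \widehat a$, whence $y \in U^*$ as required.

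The only clause requiring any real work is the filter clause \ref{uvo proper filter}, and this is where I expect the main (though ultimately mild) obstacle to lie, because it superficially resembles a sobriety-style condition---the kind of condition that in related settings (compare Proposition \ref{choice} for the patch topology $\xpm_L$) tends to force a choice principle. The plan is to let $F$ be a proper filter of the ortholattice $\mathcal{COR}(X_L^+)$ and to pull it back across the isomorphism $\widehat\bullet$ by setting $x := \{a \in L \mid \widehat a \in F\}$; this is a proper lattice filter of $L$ (proper because $\widehat 0 = \emptyset \notin F$, and a filter by the ortholattice isomorphism), and so it is already a point of $X_L^+$, with no choice principle invoked. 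The equality $F = \mathcal{COR}_{X_L^+}(x)$ then follows from the chain of equivalences $U = \widehat a \in F \iff a \in x \iff x \in \widehat a = U \iff U \in \mathcal{COR}_{X_L^+}(x)$, so the argument reduces to pure bookkeeping through Theorem \ref{representation theorem}.
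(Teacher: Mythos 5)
Your proposal follows essentially the same route as the paper's proof: clauses \ref{uvoT0}, \ref{uvo closure}, and \ref{uvo basis} are discharged by citing Proposition \ref{lattice to space} and Theorem \ref{representation theorem}; clause \ref{uvo proper filter} is handled by pulling the filter back along $\widehat\bullet$ to the proper filter $x=\{a\in L\mid\widehat a\in F\}$ and running the chain of equivalences; and clause \ref{uvo perp} is a direct unpacking of $\perp_L$ (the paper takes $U=\widehat a$ using the symmetric form of the definition where you take $U=\widehat{a^\perp}$, which is an immaterial difference). Your remark that no choice principle is needed for the filter clause is also in the spirit of the paper.

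There is one omission worth flagging. A UVO-space is by Definition \ref{uvo space} an ordered topological space endowed with an \emph{orthogonality relation}, which by Definition \ref{orthospace} must be irreflexive and symmetric; Definition \ref{spectral and patch topology} merely names $\perp_L$ an orthogonality relation without proving it is one. The paper therefore opens its proof by verifying both properties: irreflexivity because $x\perp_L x$ would put $a\wedge a^\perp=0$ into the proper filter $x$, and symmetry via the involution $a^{\perp\perp}=a$. Your proposal skips this check entirely. It is an easy verification, but it is not vacuous --- irreflexivity genuinely depends on restricting to \emph{proper} filters, and symmetry genuinely uses Definition \ref{lattice2}.\ref{double complement} --- so it should be added before the clause-by-clause verification.
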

\begin{proof}
 We first verify that $\perp_L\subseteq \mathfrak{F}(L)\times\mathfrak{F}(L)$ is indeed an orthogonality relation over the proper filters of $L$. For irreflexivity, assume by contradiction that there exists $x\in\mathfrak{F}(L)$ such that $x\perp_L x$. Then, there exists $a^{\perp}\in x$ such that $a\in x$. Since $x$ is a filter, we have that $a\wedge a^{\perp}\in x$ which by Condition \ref{lattice2}.\ref{complement} implies that $0\in x$ which contradicts the fact that $x$ is a proper lattice filter over $L$. Therefore, $\perp_L$ is irreflexive. For symmetry assume that $x,y\in\mathfrak{F}(L)$ are such that $x\perp_L y$. Then by definition, there exists $a^{\perp}\in x$ such that $a\in y$. By Condition \ref{lattice2}.\ref{double complement}, we have that $a^{\perp\perp}=a$ and so $a^{\perp\perp}\in y$ but since $a^{\perp}\in x$, we have that $y\perp_L x$ by the definition of $\perp_L$. Hence, we conclude that $\perp_L$ is symmetric. 

We already know that $X_L^+$ is a $T_0$ space from Proposition \ref{lattice to space}. Note that by Theorem \ref{representation theorem}, if $U,V\in\mathcal{COR}(X^+_L)$, then $U=\widehat{a}$ and $V=\widehat{b}$ for some $a,b\in L$. Moreover, we saw that $\widehat{a}\cap\widehat{b}=\widehat{a\wedge b}$ and $(\widehat{a})^*=\widehat{a^{\perp}}$ with $\widehat{a\wedge b}\in\mathcal{COR}(X^+_L)$ and $\widehat{a^{\perp}}\in\mathcal{COR}(X^+_L)$. Since by definition, sets of the form $\widehat{a}$ for some $a\in L$ form a basis for the space $X^+_L$, it follows that the second and third conditions are satisfied. For the fourth condition, let $x$ be a proper filter in $\mathcal{COR}(X_L^+)$. Then $y=\{a\in L\mid \widehat{a}\in x\}$ is a proper filter in $L$. It is easy to verify that $\mathcal{COR}_{X_L^+}(y)=x$.
Finally, for the fifth condition, let $x,y\in\mathfrak{F}(L)$ such that $x\perp_Ly$. Then there exists some $a\in L$ such that $a\in x$ and $a^{\perp}\in y$. By the definition of $\widehat{a}$, we have that $x\in\widehat{a}$ and that $y\in\widehat{a^{\perp}}$, but since $\widehat{\bullet}$ is a homomorphism for $^{\perp}$, we have that $y\in(\widehat{a})^*$. Again, by Theorem \ref{representation theorem}, for $U\in\mathcal{COR}(X^+_L)$, we have $U=\widehat{a}$ for some $a\in L$, which means that there exists some $U\in\mathcal{COR}(X^+_L)$ such that $x\in U$ and $y\in U^*$, as desired.
\end{proof}

 \subsection{The characterization theorem for $X^+_L$}
 
 We now proceed by demonstrating that the class of UVO-spaces provides us with the desired topological characterization of the class of spectral spaces used in our representation.     
  
\begin{theorem}\label{characterization theorem} For each UVO-space $X$, the map $X\to X_{\mathcal{COR}(X)}^+$ is a homeomorphism and an isomorphism  with respect to the orthospace reducts (X,$\perp$) and $(X^+_{\mathcal{COR}(X)},\perp)$.   
\end{theorem}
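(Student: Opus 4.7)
The plan is to show that the natural map $\varphi \colon X \to X^+_{\mathcal{COR}(X)}$ given by $\varphi(x) := \mathcal{COR}_X(x) = \{U \in \mathcal{COR}(X) \mid x \in U\}$ is the desired homeomorphism and orthospace isomorphism. The codomain is the space of proper filters of the ortholattice $\mathcal{COR}(X)$ (ortholattice by Lemma~\ref{space to lattice}), so one first checks that $\varphi$ is well-defined: $\varphi(x)$ is upward closed in $(\mathcal{COR}(X),\subseteq)$, closed under binary intersection by Definition~\ref{uvo space}.\ref{uvo closure}, and does not contain $\emptyset$; hence $\varphi(x) \in X^+_{\mathcal{COR}(X)} = \mathfrak{F}(\mathcal{COR}(X))$.

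For bijectivity, injectivity combines the $T_0$ axiom (Definition~\ref{uvo space}.\ref{uvoT0}) with the basis condition (Definition~\ref{uvo space}.\ref{uvo basis}): given $x \neq y$, an open set separating them decomposes into a union of compact open orthoregular sets, one of which must still separate them and thus distinguishes $\varphi(x)$ from $\varphi(y)$. Surjectivity is exactly the content of Definition~\ref{uvo space}.\ref{uvo proper filter}: every proper filter of $\mathcal{COR}(X)$ has the form $\mathcal{COR}_X(x) = \varphi(x)$.

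Continuity in both directions follows from the basic calculation $\varphi^{-1}(\widehat{U}) = U$ for each $U \in \mathcal{COR}(X)$, where $\widehat{U} = \{F \in \mathfrak{F}(\mathcal{COR}(X)) \mid U \in F\}$ ranges over a basis of $X^+_{\mathcal{COR}(X)}$ by Definition~\ref{spectral and patch topology}. Since $\mathcal{COR}(X)$ is a basis of $X$ by Definition~\ref{uvo space}.\ref{uvo basis} and $\varphi$ is a bijection satisfying $\varphi(U) = \widehat{U}$, the inverse map also pulls basic opens back to basic opens, so $\varphi$ is a homeomorphism.

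For the orthospace isomorphism, I must verify $x \perp y$ iff $\varphi(x) \perp_{\mathcal{COR}(X)} \varphi(y)$. The reverse implication unfolds definitions directly: if $V \in \mathcal{COR}(X)$ satisfies $V^* \in \varphi(x)$ and $V \in \varphi(y)$, then $x \in V^*$ and $y \in V$, so $x \perp y$ by the definition of $V^*$. The forward implication, which I expect to be the main obstacle, is precisely what Definition~\ref{uvo space}.\ref{uvo perp} is engineered to provide: given $x \perp y$, one obtains $U \in \mathcal{COR}(X)$ with $x \in U$ and $y \in U^*$, and then sets $V := U^*$; using closure of $\mathcal{COR}(X)$ under ${}^*$ (Definition~\ref{uvo space}.\ref{uvo closure}) together with orthoregularity $U^{**} = U$, one confirms that $V \in \mathcal{COR}(X)$, $V^* = U \in \varphi(x)$, and $V = U^* \in \varphi(y)$, witnessing $\varphi(x) \perp_{\mathcal{COR}(X)} \varphi(y)$ as required.
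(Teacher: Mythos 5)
Your proposal is correct and follows essentially the same route as the paper: the same map $x \mapsto \mathcal{COR}_X(x)$, injectivity from $T_0$ plus the $\mathcal{COR}(X)$-basis condition, surjectivity from Definition \ref{uvo space}.\ref{uvo proper filter}, continuity in both directions via $\varphi^{-1}(\widehat{U}) = U$ and $\varphi(U)=\widehat{U}$, and the orthogonality equivalence with Definition \ref{uvo space}.\ref{uvo perp} supplying the forward direction. The only (welcome) addition is your explicit check that $\mathcal{COR}_X(x)$ is a proper filter, which the paper leaves implicit.
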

\begin{proof}
We will show that the map $g\colon x\mapsto\mathcal{COR}_X(x)$ gives the desired homeomorphism from $X$ to $X_{\mathcal{COR}(X)}^+$. To see that $g$ is an injective function, let $x,y\in X$ and assume that $x\not= y$. Since $X$ is a $T_0$ space, we have that either $x\not\leqslant y$ or $y\not\leqslant x$. If $x\not\leqslant y$, then from Condition \ref{uvo space}.\ref{uvo basis} (which, as already mentioned, implies our analogue of the Priestly separation axiom), we have that there exists some $U\in\mathcal{COR}(X)$ such that $x\in U$ and $y\not\in U$, which implies that $U\in\mathcal{COR}_X(x)$ and $U\not\in\mathcal{COR}_X(y)$ so we have the desired inequality $\mathcal{COR}_X(x)\not=\mathcal{COR}_X(y)$. If on the other hand, we have that $y\not\leqslant x$, then we similarly find that there exists some $U\in\mathcal{COR}(X)$ such that $y\in U$ but $x\not\in U$, which implies that $\mathcal{COR}_X(x)\not=\mathcal{COR}_X(y)$.  As the surjectivity of $g$ is immediate from Condition \ref{uvo space}.\ref{uvo proper filter}, we have established that $g$ is a bijective function.

To see that $g$ is continuous, it will suffice to demonstrate that the inverse image of each basic open set in $X^+_{\mathcal{COR}(X)}$ is an open set in $X$. Note that each basic open set in $X_{\mathcal{COR}(X)}^+$ is of the form $\widehat{U}$ for some $U\in X_{\mathcal{COR}(X)}^+$. The continuity of $g$ can then be proved by observing the following calculation:
\begin{align*}
    g^{-1}[\widehat{U}]&=\{x\in X\mid \mathcal{COR}_X(x)\in\widehat{U}\}
    \\&=\{x\in X\mid U\in\mathcal{COR}_X(x)\}
    \\&=\{x\in X\mid x\in U\}
    \\&=U.
\end{align*}
\noindent The continuity of $g^{-1}$ is established by calculating the image of $g$ as follows: 
\begin{align*}
    g[\widehat{U}]&=\{\mathcal{COR}_X(x)\mid x\in U\}\\&=\{\mathcal{COR}_X(x)\mid U\in\mathcal{COR}_X(x)\}\\&=\widehat{U}.
\end{align*}

 Now that we have established that $g$ is a homeomorphism of topological spaces, we proceed by verifying that $g$ is an isomorphism with respect to the orthospace reducts. Suppose for $x,y\in X$, we have $g(x)\perp g(y)$. Then by the definition of $g$, we have that $\mathcal{COR}_X(x)\perp\mathcal{COR}_X(y)$. By the definition of $\perp$, this implies that there exists some $U\in\mathcal{COR}(X)$ such that $U\in\mathcal{COR}_X(x)$ and $U^*\in\mathcal{COR}_X(y)$ which means that $x\in U$ and $y\in U^*$. By universal instantiation and the definition of the $^*$ operator, we have that $x\perp y$. Conversely, let $x,y\in X$ and suppose that $x\perp y$. By hypothesis, $X$ is a UVO-space and so by Condition \ref{uvo space}.\ref{uvo perp}, there exists some $U\in\mathcal{COR}(X)$ such that $x\in U$ and $y\in U^*$. By the definition of $g$, this means that $U\in\mathcal{COR}_X(x)$ and $U^*\in\mathcal{COR}_X(y)$. Hence, by the definition of $\perp$, we have that $\mathcal{COR}_X(x)\perp\mathcal{COR}_X(y)$ i.e., $g(x)\perp g(y)$.      
\end{proof}

\begin{corollary}\label{corollary to characterization theorem}
Let $X$ be a UVO-space. Then:
\begin{enumerate}
    \item $X$ is a spectral space.
    \item Every element in $\mathcal{CO}(X)$ is a finite union of elements in $\mathcal{COR}(X)$.
\end{enumerate}
\end{corollary}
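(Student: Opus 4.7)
The plan is to deduce both parts of the corollary from the characterization theorem (Theorem \ref{characterization theorem}), which gives a homeomorphism and orthospace-isomorphism $g\colon X\to X^+_{\mathcal{COR}(X)}$, combined with what we already know about spaces of the form $X^+_L$ from Proposition \ref{lattice to space} and the representation theorem (Theorem \ref{representation theorem}). The general strategy is to transport every property of interest along $g$ and $g^{-1}$, using the computation $g^{-1}[\widehat U] = U$ from the proof of Theorem \ref{characterization theorem} to translate basic opens on the spectral side back to $\mathcal{COR}$-sets of $X$.

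For part (1), I would first invoke Lemma \ref{space to lattice} to note that $L := (\mathcal{COR}(X), \cap, {}^*, \emptyset)$ is an ortholattice. Then, by Proposition \ref{lattice to space}, the space $X^+_L = X^+_{\mathcal{COR}(X)}$ is a spectral space. Finally, since being a spectral space (being $T_0$, compact, coherent, and sober) is a purely topological property preserved by homeomorphism, Theorem \ref{characterization theorem} immediately gives that $X$ is a spectral space.

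For part (2), let $V \in \mathcal{CO}(X)$. Applying the homeomorphism $g$, the image $g[V]$ is a compact open subset of $X^+_{\mathcal{COR}(X)}$. From the argument given in the proof of Proposition \ref{lattice to space}, every compact open of $X^+_L$ is a finite union of basic opens $\widehat{U_i}$ with $U_i \in L = \mathcal{COR}(X)$; indeed this is the standard fact that a compact open in a coherent space is a finite union of basis elements, and the basis elements $\widehat{a}$ of $X^+_L$ were shown to be compact in that proof. Write $g[V] = \bigcup_{i=1}^n \widehat{U_i}$ with $U_i \in \mathcal{COR}(X)$. Applying $g^{-1}$ and using the calculation $g^{-1}[\widehat{U_i}] = U_i$ from Theorem \ref{characterization theorem}, we obtain $V = \bigcup_{i=1}^n U_i$, which is the required finite union of elements of $\mathcal{COR}(X)$.

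No step here is a genuine obstacle: the entire content of the corollary is already packaged inside Theorem \ref{characterization theorem} together with Proposition \ref{lattice to space} and Lemma \ref{space to lattice}. The only mild care needed is to keep straight that the basic opens $\widehat{U}$ of $X^+_{\mathcal{COR}(X)}$ are indexed by elements $U \in \mathcal{COR}(X)$ (which are themselves subsets of $X$), and that pulling them back along $g$ returns the very same sets $U$; everything else is an immediate transport of structure along the homeomorphism.
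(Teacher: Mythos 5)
Your part (1) is exactly the paper's argument: combine Lemma \ref{space to lattice}, Proposition \ref{lattice to space}, and Theorem \ref{characterization theorem}, and transport spectrality along the homeomorphism. For part (2) you take a genuinely different route. The paper argues directly on $X$ itself: Definition \ref{uvo space}.\ref{uvo basis} says $\mathcal{COR}(X)$ is a basis, so any $U\in\mathcal{CO}(X)$ is a union of $\mathcal{COR}$-sets, and compactness extracts a finite subunion --- no detour through the dual space is needed. You instead push $V$ forward along $g$ to $X^+_{\mathcal{COR}(X)}$, use the fact (established in the proofs of Proposition \ref{lattice to space} and Theorem \ref{representation theorem}) that compact opens there are finite unions of basic opens $\widehat{U_i}$, and pull back via the computation $g^{-1}[\widehat{U_i}]=U_i$. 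Your argument is correct, and it has the mild virtue of reusing only facts already proved about $X^+_L$; the paper's is shorter and more self-contained, since it never leaves $X$ and uses only the basis axiom together with compactness of $V$ (a step the paper itself leaves implicit, just as you would need to make explicit that a union of basis elements covering a compact open admits a finite subcover).
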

\begin{proof}
 For part 1, note that by Theorem \ref{characterization theorem}, we have that every UVO-space $X$ is homeomorphic to the space $X_{\mathcal{COR}(X)}^+$, which is a spectral space by Proposition \ref{lattice to space}, since $\mathcal{COR}(X)$ is an ortholattice whenever $X$ is a UVO-space by Lemma \ref{space to lattice}. For part 2, let $X$ be a UVO-space and let $U\in\mathcal{CO}(X)$. Then by Condition \ref{uvo space}.\ref{uvo basis}, $U$ is a finite union of elements from $\mathcal{COR}(X)$.     
\end{proof}
\section{The category of UVO-spaces}
\noindent We now proceed by investigating the abstract category-theoretic structure underlying the constructions and results achieved in the previous two sections. For an in-depth exposition of pure category theory, refer to \cite{awodey}.  

\begin{definition}\label{category of ortholattices}
Let $\mathbf{OrthLatt}$ be the category whose collection of objects are given by the class of ortholattices and whose collection of morphisms are given by the class of ortholattice homomorphisms between them.  
\end{definition}
It is clear that isomorphisms in the category $\mathbf{OrthLatt}$ coincide with algebraic isomorphisms.
\subsection{UVO-mappings}
 Just as in the categorical dual equivalence result in \cite{bezhanishvili} between the category $\mathbf{BoolAlg}$ of Boolean algebras and Boolean homomorphisms and the category $\textbf{UV}$ of UV-spaces and UV-mappings, our conception of an appropriately defined continuous function between UVO-spaces depends upon the notions of a spectral mapping and a weak \textit{p}-morphism. 

\begin{definition}\label{spectral map}
Given spectral spaces $X$ and $X'$, a map $f\colon X\to X'$ is a \textit{spectral map} if $f^{-1}[U]\in\mathcal{CO}(X)$ for every $U\in\mathcal{CO}(X')$.
\end{definition}
\begin{example}
If $X$ is a spectral space and $Y$ is a Stone space, then the map $f\colon X\to Y$ is a spectral map if and only if $f$ is a continuous function.
\end{example}

Clearly, any spectral map is a continuous function but the converse is not in general true, as can be easily seen through the following example.  
\begin{example}
Let $X$ be an infinite Stone space and let $Y$ be the Sierpi\'nsky space; namely the topological space whose carrier set is the two-element set $\{0,1\}$ and whose topology is generated by $\{\{1\}\}$.
Note that $\{1\}$ is compact open.
Take a non-isolated point $x\in X$.
Then $\complement\{x\}$ is not compact.
The characteristic function $f\colon X\to Y$ of this open set is continuous but not spectral since $f^{-1}(\{1\}) = \complement\{x\}$ is not compact.     
\end{example}

\begin{definition}\label{weak p-morpohism}
  Let $(X, \bot)$ and $(X, \bot')$ be UVO-spaces and $f: X \to X'$ be a function.
  Such a function is \emph{weakly p-morphic}, or a \emph{weak p-morphism},
  if it is a homomorphism with respect to the relations $\not\perp$
  and $\not\perp'$,
  and
  for every $y \in X$ and $z \in X'$, if $z \not \perp f(y)$, then
  there exists $x \in X$ such that $x \not \perp y$ and that $z \leqslant f(x)$.
\end{definition}
Note that a function is weakly $p$-morphic
whenever it is $p$-morphic with respect to the complements of the orthogonality
relations.

With the notions of a spectral map and a weak $p$-morphism in mind, we arrive at the notion of a UVO-map. 
\begin{definition}\label{uvo-map}
If $X$ and $X'$ are UVO-spaces, then a map $f\colon X\to X'$ is a \textit{UVO-map} if $f$ is a spectral map and a weak $p$-morphism.
\end{definition}
The above construction of a UVO-map between UVO-spaces is highly reminscent to the construction of a continuous map between two Stone spaces of an ortholattice, as defined in \cite{bimbo}.

 \begin{definition}\label{homeomorphic uvo map}
 If $X$ and $Y$ are UVO-spaces, then a UVO-map $f\colon X\to Y$ is a \emph{homeomorphism} if $f$ is a homeomorphism as a continuous map and an isomorphism (or bijective embedding) with respect to the orthospace (orthoframe) reducts $(X,\perp)$ and $(Y,\perp)$ of $X$ and $Y$ respectively.    
 \end{definition}

\begin{definition}
Let $\textbf{UVO}$ be the category whose collection of objects are given by the class of UVO-spaces and whose collection of morphisms are given by the class UVO-mappings between them. 
\end{definition}
Note that the isomorphisms in $\mathbf{UVO}$ are given exactly by those UVO-maps which are homeomorphisms as described in Definition \ref{homeomorphic uvo map}.  
\subsection{Basic results about UVO-mappings}
 The following results will be useful in our proof of the categorical dual equivalence between $\textbf{OrthLatt}$ and $\textbf{UVO}$. 
 \begin{proposition}\label{prop1 spectral map}
 If $X$ and $X'$ are UVO-spaces and $f\colon X\to X'$ is a UVO-map, then $f^{-1}[U]\in\mathcal{COR}(X)$ for each $U\in\mathcal{COR}(X')$.  
 \end{proposition}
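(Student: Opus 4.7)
The plan is to split the conclusion into its compact-open and its orthoregularity components. The first is immediate: since $f$ is a UVO-map, it is in particular a spectral map by Definition \ref{uvo-map}, and every $U \in \mathcal{COR}(X')$ lies in $\mathcal{CO}(X')$, so Definition \ref{spectral map} at once yields $f^{-1}[U] \in \mathcal{CO}(X)$. The substantive content of the proposition is therefore to establish that $f^{-1}[U]$ is $\perp$-regular, i.e., $(f^{-1}[U])^{**} = f^{-1}[U]$.

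The inclusion $f^{-1}[U] \subseteq (f^{-1}[U])^{**}$ is general: for any subset $Y$ of an orthospace, $Y \subseteq Y^{**}$ follows directly from the symmetry of $\perp$. The reverse inclusion I would prove by contrapositive. Assume $x \notin f^{-1}[U]$, so $f(x) \notin U$. Since $U$ is $\perp$-regular, $U = U^{**}$, so there exists $y' \in U^*$ with $f(x) \not\perp' y'$. Now the back-condition of the $p$-morphism clause (Definition \ref{uvo-map}(2)) applied to $f(x) \not\perp' y'$ supplies a point $y \in X$ with $f(y) = y'$ and $x \not\perp y$. It remains to show $y \in (f^{-1}[U])^*$: pick any $z \in f^{-1}[U]$ and suppose for contradiction that $y \not\perp z$; then the forth-condition (Definition \ref{uvo-map}(1)) gives $y' = f(y) \not\perp' f(z)$, contradicting $y' \in U^*$ together with $f(z) \in U$. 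Hence $y \perp z$ for every such $z$, so $y \in (f^{-1}[U])^*$, and since $x \not\perp y$ we conclude $x \notin (f^{-1}[U])^{**}$.

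The main conceptual subtlety, such as it is, lies in the fact that the $p$-morphism conditions in Definition \ref{uvo-map} are phrased in terms of $\not\perp$ rather than $\perp$, and it is precisely this formulation that drives the orthoregularity transfer: the back-condition produces the non-orthogonal witness $y$ from the non-orthogonality of $f(x)$ and $y'$, while the forth-condition guarantees that this witness is orthogonal to everything in $f^{-1}[U]$. Beyond keeping track of these dualities, the argument is a direct diagram chase requiring no further topological input.
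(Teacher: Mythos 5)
Your proof is correct. It reaches the same conclusion as the paper but by a more elementary, self-contained route: the paper observes that $Y^{**} = \Box\Diamond Y$ where $\Box$ and $\Diamond$ are the modal operators associated with $\not\perp$, and then invokes the standard fact that inverse images along $p$-morphisms commute with modal operators, so that $\perp$-regularity transfers in two lines. Your argument unpacks exactly that commutation into an explicit element chase --- the back-condition manufactures the non-orthogonal witness $y$ from $f(x)\not\perp' y'$, and the forth-condition forces $y\in (f^{-1}[U])^*$ --- which is precisely the content of ``$f^{-1}$ commutes with $\Diamond$ and $\Box$'' made concrete. What your version buys is independence from the modal-logic translation (and from the citation to Goldblatt that the paper leans on); what the paper's version buys is brevity and a pointer to the general principle at work. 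You also explicitly dispatch the compact-open half via the spectral-map condition, which the paper leaves implicit; that is a small but genuine gain in completeness.
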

 \begin{proof}
   Since $f$ is spectral by hypothesis,
  the inverse image $f^{-1}[U]$ of such $U$ is compact open.  It was proved by Bimb\'o~\cite[Lemma 3.9]{bimbo} within ZF that $f^{-1}[U]$ for a regular $U$ is again regular.
\end{proof}
\begin{proposition}\label{prop2 spectral map}
If $X$ and $X'$ are UVO-spaces and $f\colon X\to X'$ is a map such that $f^{-1}[U]\in\mathcal{CO}(X)$ for every $U\in\mathcal{COR}(X')$, then $f$ is a spectral map. 
\end{proposition}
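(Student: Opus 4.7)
The plan is to reduce the statement to the hypothesis by invoking Corollary~\ref{corollary to characterization theorem}, which tells us that in a UVO-space every compact open set is a finite union of compact open $\perp$-regular sets. That is the crucial structural fact that lets us go from information about $\mathcal{COR}(X')$-preimages to information about $\mathcal{CO}(X')$-preimages.

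Concretely, I would argue as follows. Let $V \in \mathcal{CO}(X')$ be arbitrary. By Corollary~\ref{corollary to characterization theorem}(2) applied to the UVO-space $X'$, there exist $U_1, \dots, U_n \in \mathcal{COR}(X')$ with $V = \bigcup_{i=1}^n U_i$. Since taking preimages commutes with unions, we have $f^{-1}[V] = \bigcup_{i=1}^n f^{-1}[U_i]$. By hypothesis, each $f^{-1}[U_i]$ lies in $\mathcal{CO}(X)$, so $f^{-1}[V]$ is a finite union of compact open sets of $X$; it is open because finite (indeed, arbitrary) unions of opens are open, and it is compact because finite unions of compact sets are compact. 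Hence $f^{-1}[V] \in \mathcal{CO}(X)$, which shows $f$ is a spectral map by Definition~\ref{spectral map}.

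No step here should be a serious obstacle; the only thing to be careful of is that Corollary~\ref{corollary to characterization theorem}(2) is being applied to the codomain $X'$ (not the domain), and that we are not implicitly using that $f^{-1}[U_i]$ is $\perp$-regular, only that it is compact open, which is exactly what the hypothesis provides. The argument is essentially a one-line consequence of the corollary, so the proof will be short.
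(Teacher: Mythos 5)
Your proposal is correct and follows essentially the same route as the paper's own proof: both decompose an arbitrary $V\in\mathcal{CO}(X')$ into a finite union of elements of $\mathcal{COR}(X')$ via Corollary~\ref{corollary to characterization theorem}(2), pull back along $f$, and observe that a finite union of compact opens is compact open. If anything, your version is slightly more careful than the paper's, which begins by assuming $f$ is a UVO-map rather than using only the stated (weaker) hypothesis.
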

\begin{proof}
Suppose that $X$ and $X'$ are UVO-spaces and that $f\colon X\to X'$ is a UVO-map. Then by Corollary \ref{corollary to characterization theorem}, we find that $U=\bigcup_{i=1}^nU_i$ for $U_i\in\mathcal{COR}(X')$, which yields the following equalities: \[f^{-1}[U]=f^{-1}\Bigg[\bigcup_{i=1}^nU_i\Bigg]=\bigcup_{i=1}^nf^{-1}[U_i].\] By hypothesis, we have that $f^{-1}[U_i]\in\mathcal{CO}(X)$ which implies that $f^{-1}[U]$ is a finite union of compact opens and thus $f$ is a spectral map. 
\end{proof}
\begin{lemma}\label{lemma1 spectral map}
Let $X$ and $X'$ be spectral spaces and let $f\colon X\to X'$ be a map. If for each set $U$ in some subbasis of $X'$, we have $f^{-1}[U]\in\mathcal{CO}(X)$, then $f$ is a spectral map. 
\end{lemma}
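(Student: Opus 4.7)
The plan is to use coherence of $X$ together with compactness of sets in $\mathcal{CO}(X')$ to upgrade the hypothesis on subbasic preimages to a statement about arbitrary compact open preimages. Let $\mathcal{S}$ denote the subbasis of $X'$ for which $f^{-1}[U] \in \mathcal{CO}(X)$ holds for every $U \in \mathcal{S}$. The collection of basic opens of $X'$ generated by $\mathcal{S}$ consists of finite intersections $U_1 \cap \cdots \cap U_n$ with each $U_i \in \mathcal{S}$.

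First I would observe that since every $f^{-1}[U]$ for $U \in \mathcal{S}$ is (in particular) open, $f$ is already continuous by the subbase characterization of continuity. Next, I would check that the preimage of every such basic open is compact open in $X$: indeed,
\[
f^{-1}[U_1 \cap \cdots \cap U_n] = f^{-1}[U_1] \cap \cdots \cap f^{-1}[U_n],
\]
which is a finite intersection of elements of $\mathcal{CO}(X)$, and this intersection lies in $\mathcal{CO}(X)$ because $X$ is coherent (so $\mathcal{CO}(X)$ is closed under binary, and hence finite, intersections).

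Now let $V \in \mathcal{CO}(X')$ be arbitrary. Since $V$ is open and the finite intersections of elements of $\mathcal{S}$ form a basis, there is a family $\{B_j\}_{j \in J}$ of such basic opens with $B_j \subseteq V$ for all $j$ and $V = \bigcup_{j \in J} B_j$. Since $V$ is compact, finitely many suffice: $V = B_{j_1} \cup \cdots \cup B_{j_k}$. Taking preimages,
\[
f^{-1}[V] = f^{-1}[B_{j_1}] \cup \cdots \cup f^{-1}[B_{j_k}],
\]
which is a finite union of compact open subsets of $X$ and hence is itself compact open. This shows $f^{-1}[V] \in \mathcal{CO}(X)$, so $f$ is a spectral map.

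The only mildly delicate point is recognising that the argument needs \emph{both} coherence of $X$ (to keep finite intersections of compact opens compact open, handling the basic-open stage) and compactness of each $V \in \mathcal{CO}(X')$ (to reduce a potentially infinite basic-open cover to a finite one, handling the compact-open stage). No choice-theoretic input is required, so this lemma fits comfortably into the choice-free framework developed in the preceding sections.
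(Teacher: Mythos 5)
Your proof is correct and follows essentially the same route as the paper's: decompose each compact open $V\subseteq X'$ as a finite union (by compactness) of finite intersections of subbasic sets, then pull back using closure of $\mathcal{CO}(X)$ under finite unions and (by coherence) finite intersections. Your version is slightly more explicit about where compactness of $V$ and coherence of $X$ are each invoked, but the argument is the same.
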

\begin{proof}
   By definition, every open set $U\in\mathcal{O}(X)$ is a union of finite intersections of subbasic open sets so every compact open set $U\in\mathcal{CO}(X)$ is a finite union $\bigcup^n_{i=1}U_i$ of finite intersections of subbasic sets. Then, since \[f^{-1}[U]=f^{-1}\Bigg[\bigcup_{i=1}^nU_i\Bigg]=\bigcup_{i=1}^nf^{-1}[U_i]\] it follows that $f^{-1}[U]\in\mathcal{CO}(X)$ if every $f^{-1}[U_i]\in\mathcal{CO}(X)$. Moreover, given that  $U_i=\bigcap_{j=1}^mV_j$ where each $V_j$ is a subbasic set and given that \[f^{-1}[U_i]=f^{-1}\Bigg[\bigcap_{j=1}^mV_j\Bigg]=\bigcap_{j=1}^mf^{-1}[V_j]\] it similarly follows that $f^{-1}[U_i]\in\mathcal{CO}(X)$ if every $f^{-1}[V_j]\in\mathcal{CO}(X)$. Finally, since by hypothesis, the inverse image of each $V_j$ is compact open, we have that $f$ is a spectral map, as desired.     
\end{proof}

\subsection{The main result}

 We now proceed with the promised choice-free categorical dual equivalence result between the categories $\mathbf{OrthLatt}$ and $\mathbf{UVO}$.     

\begin{theorem}\label{main theorem}
The category $\textbf{OrthLatt}$ of ortholattices and ortholattice homomorphisms and the category $\textbf{UVO}$ of UVO-spaces and UVO-mappings constitute a dual equivalence of categories. 
\end{theorem}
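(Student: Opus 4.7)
The plan is to prove the equivalence by constructing contravariant functors between the two categories together with natural isomorphisms witnessing the unit and counit. The functors are forced by the preceding representation and characterization theorems. On objects, $F$ sends an ortholattice $L$ to $X_L^+$ (a UVO-space by Lemma~\ref{lattice to uvo-space}), and $G$ sends a UVO-space $X$ to $\mathcal{COR}(X)$ (an ortholattice by Lemma~\ref{space to lattice}). On morphisms, for a lattice homomorphism $h\colon L\to L'$ put $F(h)\colon X_{L'}^+\to X_L^+$, $x\mapsto h^{-1}[x]$, and for a UVO-map $f\colon X\to X'$ put $G(f)\colon\mathcal{COR}(X')\to\mathcal{COR}(X)$, $U\mapsto f^{-1}[U]$.

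The technical core is verifying that these morphism assignments are well-defined. For $G(f)$: Proposition~\ref{prop1 spectral map} already gives $f^{-1}[U]\in\mathcal{COR}(X)$; preservation of $\cap$ and of $\emptyset$ is immediate for inverse images; and preservation of ${}^*$ reduces to the identity $f^{-1}[U^*]=(f^{-1}[U])^*$, whose $\subseteq$-inclusion uses the contrapositive of the forward $\not\perp$-$p$-morphism condition and whose $\supseteq$-inclusion uses the contrapositive of the back condition. For $F(h)$: that $h^{-1}[x]$ is a proper filter of $L$ whenever $x$ is a proper filter of $L'$ follows since $h$ preserves finite meets and the bounds; being a spectral map reduces via Lemma~\ref{lemma1 spectral map} to the computation $F(h)^{-1}[\widehat{a}]=\widehat{h(a)}$ on subbasic opens; the forward $\not\perp$-$p$-morphism condition follows from $h$ preserving ${}^\perp$ together with the definition of $\perp_L$; and the back condition asks, given $F(h)(x)\not\perp_L y'$, for some $y\in X_{L'}^+$ with $x\not\perp_{L'}y$ and $h^{-1}[y]=y'$. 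For this, the natural candidate is the proper filter of $L'$ generated by $h[y']$; its properness follows from the observation that $0\in y$ would give some $a\in y'$ with $h(a)=0$, hence $h(a^\perp)=1\in x$, violating $F(h)(x)\not\perp_L y'$.

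Once $F$ and $G$ are functorial (preservation of identity and composition being routine via the computation $F(h)^{-1}[\widehat{a}]=\widehat{h(a)}$), the unit and counit are already built by the two main theorems of the preceding sections: set $\eta_L\colon L\to\mathcal{COR}(X_L^+)$ to be the isomorphism $\widehat{\bullet}$ of Theorem~\ref{representation theorem}, and $\epsilon_X\colon X\to X^+_{\mathcal{COR}(X)}$ to be $x\mapsto\mathcal{COR}_X(x)$ from Theorem~\ref{characterization theorem}, which is both a homeomorphism and an isomorphism of orthospace reducts. Naturality asks that the squares $G(F(h))\circ\eta_L=\eta_{L'}\circ h$ and $F(G(f))\circ\epsilon_X=\epsilon_{X'}\circ f$ commute for every $h$ and every $f$; both identities unwind pointwise to the same $F(h)^{-1}[\widehat{a}]=\widehat{h(a)}$ computation and its dual for $G(f)$, so are immediate.

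The hardest step will be the back $\not\perp$-$p$-morphism condition for $F(h)$. Its counterpart for the specialization order in the Boolean case of Bezhanishvili and Holliday is essentially automatic, whereas here one must exploit the orthogonality structure of $L'$. In the choice-dependent Bimb\'o--Goldblatt framework, the analogous witness is produced by extending $h[y']$ to a maximal (or prime) filter via Zorn's Lemma; retaining \emph{all} proper filters of $L'$ in the choice-free spectral space $X_{L'}^+$ is what should make the same construction feasible choice-freely, but checking that the candidate $y$ simultaneously satisfies properness, $h^{-1}[y]=y'$, and $x\not\perp_{L'}y$ is the delicate point of the argument.
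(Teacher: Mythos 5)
Your plan follows the paper's proof essentially step for step: the same pair of contravariant functors, with $h\mapsto h_+$ given by $h_+(x)=h^{-1}[x]$ and $f\mapsto f^+$ given by $f^+(U)=f^{-1}[U]$; the same computation $h_+^{-1}[\widehat{a}]=\widehat{h(a)}$ driving both spectrality (via Lemma \ref{lemma1 spectral map}) and the naturality squares; the same appeal to Proposition \ref{prop1 spectral map} and the two $p$-morphism clauses to show that $f^+$ preserves $\cap$, ${}^*$, and $\emptyset$; the same use of Theorems \ref{representation theorem} and \ref{characterization theorem} to supply the components of the unit and counit; and the same witness for the back condition of $h_+$, namely the filter of $L'$ generated by the $h$-image of the given filter. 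Your properness argument for that witness (a generating element mapping to $0$ would force an orthogonality that contradicts the hypothesis) is a correct, slightly repackaged version of the paper's, and the remaining check $x\not\perp_{L'}y$ goes through exactly as in the paper by pulling $h(a)\le a'$ back to $a'^{\perp}\le h(a^{\perp})$.

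The one point you single out as delicate is, however, a genuine gap that your sketch does not close --- and it is precisely the step the paper dispatches with the word ``clearly.'' Writing $y$ for the filter of $L'$ generated by $h[y']$, the inclusion $y'\subseteq h^{-1}[y]$ is immediate, but the back condition of Definition \ref{uvo-map} demands the exact equality $h^{-1}[y]=y'$, and the reverse inclusion requires that $h(a)\le h(b)$ for some $a\in y'$ force $b\in y'$. This does not follow merely from $h$ being an ortholattice homomorphism: if $h$ is not order-reflecting, $h^{-1}[y]$ can be strictly larger than $y'$ (already for $h$ a Boolean homomorphism collapsing an atom to $1$ and $y'$ the trivial filter). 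In the Boolean setting of Bezhanishvili and Holliday the analogous verification is rescued by the residuation $a\to b$, which is unavailable in an ortholattice, so your instinct that this is ``the delicate point'' is exactly right. As it stands your proposal reproduces the paper's argument faithfully, including this one unverified step; a complete proof must either supply that verification, modify the witness, or weaken the back condition of a UVO-map to an inclusion.
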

\begin{proof}
 Let $L$ and $L'$ be ortholattices and let $h\colon L\to L'$ be an ortholattice homomorphism. Given $x\in X_{L}^+$, define $h_+(x)=h^{-1}[x]$. Since $h$ is an ortholattice homomorphism, $h_+(x)$ is a proper lattice filter in $L$. Hence, we have an induced map $h_+\colon X_{L'}^+\to X_{L}^+$. We want to show that $h_+$ is a UVO-map. We first verify that $h_+$ is a spectral map. By Lemma \ref{lemma1 spectral map}, it will suffice to show that for each basic open $\widehat{a}$ in the space $X_{L}^+$, we have that $h_+^{-1}[\widehat{a}]\in\mathcal{CO}(X_{L}^+)$. This is achieved by observing the following calculation:
\begin{align*}
    h_+^{-1}[\widehat{a}]&=\{x\in X_{L'}^+\mid h_+(x)\in\widehat{a}\}
    \\&=\{x\in X_{L'}^+\mid h^{-1}[x]\in\widehat{a}\}
    \\&=\{x\in X_{L'}^+\mid a\in h^{-1}[x]\}
    \\&=\{x\in X_{L'}^+\mid h(a)\in x\}
    \\&=\widehat{h(a)}.
\end{align*}
This is compact open.

It can be proved that $h_+$ is weakly $p$-morphic in the same way as Bimb\'o~\cite[Lemma 3.9]{bimbo}.

For the other direction, suppose that $X$ and $X'$ are UVO-spaces and that $f\colon X\to X'$ is a UVO-map. Given any $U\in\mathcal{COR}(X')$, define $f^+(U')=f^{-1}[U]$. Note that by Proposition \ref{prop1 spectral map}, we have that $f^+(U)=f^{-1}[U]\in\mathcal{COR}(X)$ since $f$ is by hypothesis a UVO-map.
It can be proved that $h_+$ is an ortholattice homomorphism in the same way as Bimbo~\cite[Lemma 3.10]{bimbo}.

Clearly $(\bullet)^+$ preserves identity maps and the composition structure. Hence $(\bullet)^+$, $\mathcal{COR}(\bullet)$, along with Lemmas \ref{lattice to uvo-space} and \ref{space to lattice}, give rise to contravariant functors  $(\bullet)^+\colon\textbf{OrthLatt}\to\textbf{UVO}$ and $\mathcal{COR}(\bullet)\colon\textbf{UVO}\to\textbf{OrthLatt}$
where $(\bullet)^+$ is defined on objects and morphisms by \[L\mapsto X^+_L,\hspace{.2cm}h\colon L\to L'\mapsto h_+\colon X^+_{L'}\to X^+_L\] and $\mathcal{COR}(\bullet)$ is defined on objects and morphisms by \[X\mapsto\mathcal{COR}(X),\hspace{.2cm}f\colon X\to X'\mapsto f^+:\mathcal{COR}(X')\to\mathcal{COR}(X).\]

\noindent In light of Theorem \ref{representation theorem} which established that every ortholattice $L$ is isomorphic to $\mathcal{COR}(X^+_L)$, it is not difficult to verify that every ortholattice homomorphism $h\colon L\to L'$ makes the following diagram commute: 
\[
\begin{tikzcd}
L \arrow{r}{h} \arrow[swap]{d} & L' \arrow{d} \\
\mathcal{COR}(X_L^+)  \arrow[swap]{r}{(h_+)^+} & \mathcal{COR}(X_{L'}^+)
\end{tikzcd}
\]
which implies that each component
$\eta_L\colon 1_L(L)\to \mathcal{COR}(\bullet)\circ (\bullet)^+(L)$ of the natural transformation $\eta\colon 1_{\mathbf{OrthLatt}}\to\mathcal{COR}(\bullet)\circ(\bullet)^+$ is an isomorphism.

Similarly, in light of Theorem \ref{characterization theorem}, which established that every UVO-space $X$ is homeomorphic to $X^+_{\mathcal{COR}(X)}$ and order isomorphic with respect to the complements of the orthogonality relations, it is not difficult to verify that every UVO-map $f\colon X\to X'$ makes the below diagram commute:
\[
\begin{tikzcd}
X \arrow{r}{f} \arrow[swap]{d} & X' \arrow{d} \\
X_{\mathcal{COR}(X)}^+  \arrow[swap]{r}{(f^+)_+} & X_{\mathcal{COR}(X')}^+
\end{tikzcd}
\]
which implies that each component
$\theta_X\colon 1_X(X)\to (\bullet)^+\circ\mathcal{COR}(\bullet)(X)$ of the natural transformation $\theta:1_{\mathbf{UVO}}\to(\bullet)^+\circ\mathcal{COR}(\bullet)$
is an isomorphism, which completes our proof that the contravariant functors $\mathcal{COR}(\bullet)$ and $(\bullet)^+$ constitute a dual equivalence of categories.
\end{proof}

\section{Duality dictionary}

\noindent In light of Theorem \ref{main theorem}, we proceed by developing a \say{duality dictionary} (as depicted in the Figure \ref{duality dictionary}) for the purposes of explicitly establishing how one can translate between various lattice-theoretic concepts (as applied to the category $\mathbf{OrthLatt}$) and their corresponding dual topological concepts in the category $\mathbf{UVO}$. For an analogous duality dictionary relating the category of Boolean algebras $\mathbf{BoolAlg}$, the category of UV-spaces $\mathbf{UV}$, and the category of Stone spaces $\mathbf{Stone}$, refer to \cite{bezhanishvili}.     

\begin{figure}[htbp]
\begin{tabularx}{1\textwidth} { 
  | >{\centering\arraybackslash}X 
  | >{\centering\arraybackslash}X 
  | }
 \hline
 \textbf{OrthLatt} & \textbf{UVO}  \\
 \hline
 ortholattice  & UVO-space    \\
\hline homomorphism & UVO-map \\
\hline complete lattice & complete UVO-space \\
\hline atom & isolated point  \\
\hline atomless lattice & $X_{\text{iso}}=\emptyset$      \\
\hline atomic lattice & $\text{Cl}(X_{\text{iso}})=X$  \\
\hline injective homomorphism & surjective UVO-map  \\
\hline surjective homomorphism & UVO-embedding  \\
\hline subalgebra & image under UVO-map \\
\hline direct product & UVO-sum\\

\hline MacNeille completion & $\mathcal{R}(\mathfrak{P}(X))$ \\
\hline canonical extension & $\mathcal{R}(X)$   \\
\hline 
  \end{tabularx}
\caption{Duality dictionary for $\mathbf{Orthlatt}$ and $\mathbf{UVO}$}\label{duality dictionary}
\end{figure}

\subsection{Complete lattices}

\begin{definition}
Let $X$ be a UVO-space, then $X$ is \textit{complete} if for every open set $U\in\mathcal{O}(X)$, we have that $U^{*\circ*}\in\mathcal{COR}(X)$.%
\footnote{We sometimes write $(\bullet)^\circ$ for the interior of a set where appropriate in terms of typography.}
\end{definition}
We now verify that complete UVO-spaces and the duals of complete ortholattices coincide.    
\begin{proposition}\label{complete lattice}
Let $L$ be an ortholattice and let $X$ be its dual UVO-space. Then:
\begin{enumerate}
    \item\label{complete 1} A family $\{U_i\}_{i\in I}\subseteq\mathcal{COR}(X)$ has a greatest lower bound in $\mathcal{COR}(X)$ iff $\text{Int}\big(\bigcap_{i\in I}U_i \big)\in\mathcal{COR}(X)$, in which case \[\bigwedge_{i\in I}U_i=\text{Int}\Big(\bigcap_{i\in I}U_i\Big).\]
    \item\label{complete 2} A family $\{U_i\}_{i\in I}\subseteq\mathcal{COR}(X)$ has a least upper bound in $\mathcal{COR}(X)$ iff $(\bigcup_{i\in I}U_i)^{*\circ *}\in\mathcal{COR}(X)$, in which case \[\bigvee_{i\in I}U_i=\big(\bigcup_{i\in I}U_i\big)^{*\circ *}.\]
     \item\label{complete 3} $L$ is a complete ortholattice iff $X$ is a complete UVO-space.
\end{enumerate}
\end{proposition}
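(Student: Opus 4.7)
The plan is to prove the three parts in order: part \ref{complete 1} directly from the UVO-space axioms, part \ref{complete 2} from part \ref{complete 1} by De Morgan duality, and part \ref{complete 3} by combining both with the representation theorem.

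For part \ref{complete 1}, the forward direction is straightforward. Set $V := \text{Int}(\bigcap_{i \in I} U_i)$ and suppose $V \in \mathcal{COR}(X)$. Then $V$ is a lower bound of $\{U_i\}$, and if $W \in \mathcal{COR}(X)$ satisfies $W \subseteq U_i$ for each $i$, then $W \subseteq \bigcap_i U_i$, and openness of $W$ yields $W \subseteq V$. For the converse, assume $W_0 = \bigwedge_i U_i$ exists in $\mathcal{COR}(X)$; the aim is to show $W_0 = V$, so that in particular $V \in \mathcal{COR}(X)$. The inclusion $W_0 \subseteq V$ is automatic, since $W_0$ is open and contained in $\bigcap_i U_i$. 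For the reverse inclusion, fix $x \in V$; since $\mathcal{COR}(X)$ is a basis for $X$ by Definition \ref{uvo space}.\ref{uvo basis}, there exists $W_x \in \mathcal{COR}(X)$ with $x \in W_x \subseteq V \subseteq \bigcap_i U_i$. Then $W_x$ is a lower bound of the family in $\mathcal{COR}(X)$, so $W_x \subseteq W_0$ by the universal property of $W_0$, and hence $x \in W_0$.

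For part \ref{complete 2}, Lemma \ref{space to lattice} ensures $\mathcal{COR}(X)$ is an ortholattice with $^*$ an order-reversing involution; it is then standard that a family $\{A_i\}$ admits a join iff $\{A_i^*\}$ admits a meet, with $\bigvee_i A_i = (\bigwedge_i A_i^*)^*$. Applied to $\{U_i\}$ together with the identity $(\bigcup_i U_i)^* = \bigcap_i U_i^*$ (immediate from the definition of $^*$), part \ref{complete 1} converts the existence of the join into the condition $M := \text{Int}(\bigcap_i U_i^*) = (\bigcup_i U_i)^{*\circ} \in \mathcal{COR}(X)$. Closure of $\mathcal{COR}(X)$ under $^*$ (Definition \ref{uvo space}.\ref{uvo closure}) makes this equivalent to $N := M^* = (\bigcup_i U_i)^{*\circ *} \in \mathcal{COR}(X)$; in the less-trivial direction, one checks directly that whenever $N \in \mathcal{COR}(X)$, then $N$ is an upper bound of $\{U_i\}$ (via $N = M^* \supseteq (\bigcup_i U_i)^{**} \supseteq \bigcup_i U_i$), and that any $V \in \mathcal{COR}(X)$ upper bound satisfies $V^* \subseteq (\bigcup_i U_i)^*$ with $V^*$ open, whence $V^* \subseteq M$ and $V \supseteq N$. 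When the condition holds, $\bigvee_i U_i = N$.

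For part \ref{complete 3}, Theorem \ref{representation theorem} gives $L \cong \mathcal{COR}(X)$, so $L$ is complete iff every family in $\mathcal{COR}(X)$ admits a join; by part \ref{complete 2} this is equivalent to $(\bigcup_i U_i)^{*\circ *} \in \mathcal{COR}(X)$ for every family $\{U_i\}_i \subseteq \mathcal{COR}(X)$. Since Definition \ref{uvo space}.\ref{uvo basis} ensures that every $U \in \mathcal{O}(X)$ is of the form $\bigcup_i U_i$ with $U_i \in \mathcal{COR}(X)$, and conversely every such union is open, this condition reduces to $U^{*\circ *} \in \mathcal{COR}(X)$ for every $U \in \mathcal{O}(X)$, which is precisely the definition of a complete UVO-space.

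The main obstacle is the converse direction in part \ref{complete 1}, where one must extract a basic neighborhood $W_x \in \mathcal{COR}(X)$ around an arbitrary point of $V$ to compare against $W_0$; the UVO-space axiom that $\mathcal{COR}(X)$ forms a basis is exactly what makes this work. A secondary subtlety in part \ref{complete 2} is the passage between $M \in \mathcal{COR}(X)$ and $M^* \in \mathcal{COR}(X)$, which is immediate in one direction from closure under $^*$ but requires the direct universal-property verification sketched above in the other direction.
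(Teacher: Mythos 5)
Your proof is correct, and parts \ref{complete 1} and \ref{complete 3} follow essentially the same lines as the paper's (the paper phrases part \ref{complete 1} by pulling the sets back to elements of $L$ via Theorem \ref{representation theorem}, whereas you argue directly inside the lattice $\mathcal{COR}(X)$ using the basis axiom; the content is the same). Where you genuinely diverge is part \ref{complete 2}: the paper proves it from scratch by an element-level computation in $X^+_L$, characterizing membership in $(\bigcup_i \widehat{a_i})^{*\circ*}$ by a quantified condition $(\dagger)$ and exhibiting explicit principal filters $\mathop\uparrow b$ as witnesses. You instead derive part \ref{complete 2} from part \ref{complete 1} via the order-reversing involution $^*$, the identity $(\bigcup_i U_i)^* = \bigcap_i U_i^*$, and the standard fact that joins correspond to meets of $^*$-images; you correctly isolate and handle the one genuine subtlety, namely that $(\bigcup_i U_i)^{*\circ*}\in\mathcal{COR}(X)$ does not immediately give $(\bigcup_i U_i)^{*\circ}\in\mathcal{COR}(X)$, by verifying the universal property of $N=M^*$ directly ($N\supseteq(\bigcup_i U_i)^{**}\supseteq\bigcup_i U_i$, and any $\perp$-regular compact open upper bound $V$ has $V^*$ open inside $(\bigcup_i U_i)^*$, hence $V^*\subseteq M$ and $V=V^{**}\supseteq N$). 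Your route is shorter, stays entirely within an abstract UVO-space without invoking the representation of $U_i$ as $\widehat{a_i}$, and makes the logical dependence of part \ref{complete 2} on part \ref{complete 1} explicit; the paper's route has the minor advantage of producing concrete filter witnesses, which it reuses later in the canonical-extension argument. Both are sound.
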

\begin{proof}
For part 1, observe that $\text{Int}\big(\bigcap_{i\in I}U_i\big)=\inf(\{U_i\}_{i\in I})$ for $\{U_i\}_{i\in I}\subseteq\mathcal{COR}(X)$ immediately follows from our hypothesis that $\mathrm{Int}(\bigcap_{i\in I}U_i)\in\mathcal{COR}(X)$. The for left to right implication of part 1,  assume that $\bigwedge_{i\in I}U_i$ is defined in $\mathcal{COR}(X)$. Note that by Theorem \ref{representation theorem}, for every $i\in I$, there exists some $\widehat{a_i}\in L$ such that $U_i=\widehat{a_i}$, and since the map $\widehat{\bullet}\colon L\to\mathcal{COR}(X^+_L)$ defined by $a\mapsto\widehat{a}$ is an ortholattice isomorphism, we have the following equalities:
\[\bigwedge_{i\in I}U_i=\bigwedge_{i\in I}\widehat{a_i}=\widehat{\bigwedge_{i\in I}a_i}.\] Hence, it suffices to show that \begin{equation}
    \widehat{\bigwedge_{i\in I}a_i}=\text{Int}\Big(\bigcap_{i\in I}\widehat{a_i}\Big).
\end{equation}  

We have $\widehat{\bigwedge_{i\in I}a_i}\subseteq\text{Int}\Big(\bigcap_{i\in I}\widehat{a_i}\Big)$ as clearly $\widehat{\bigwedge_{i\in I}a_i}\subseteq\bigcap_{i\in I}\widehat{a_i}$ such that $\widehat{\bigwedge_{i\in I}a_i}$ is an open set.
To see that $\text{Int}\Big(\bigcap_{i\in I}\widehat{a_i}\Big)\subseteq\widehat{\bigwedge_{i\in I}a_i}$, suppose that $x\in\text{Int}\Big(\bigcap_{i\in I}\widehat{a_i}\Big)$. Then there exists some $U\in\mathcal{COR}(X)$ such that $x\in U\subseteq\bigcap_{i\in I}\widehat{a_i}$. Hence, by Theorem \ref{representation theorem}, we have that $U=\widehat{b}$ for some $b\in L$. Moreover, since $\widehat{b}\subseteq\bigcap_{i\in I}\widehat{a_i}$, it follows that $b\leq\bigwedge_{i\in I}a_i$. Then, since $x\in\widehat{b}$, we have $b\in x$ so $\bigwedge_{i\in I}a_i\in x$, hence $x\in\widehat{\bigwedge_{i\in I}a_i}$.

For part 2,
we first assume that $\bigvee_i U_i$ exists.
Let $a_i \in L$ be such that $\widehat{a_i} = U_i$.
One can show that $u \in (\bigcup_i \widehat{a_i})^{*\circ*}$
if and only if
\[
\zenbu{v\in X}[\underbrace{\aru{b\in v}\zenbu{i \in I} a_i \le b^\bot}_{(\dagger)} 
\implies u \perp v].
\]
With this in mind, let $u \in \widehat{\bigvee_i U_i}$ be arbitrary.
We show $u \in (\bigcup_i \widehat{a_i})^{*\circ*}$.
Take an arbitrary $v \in X$ with (\textdagger).
Since $a_i \le b^\bot$ for all $i \in I$, we have $\bigvee_i a_i \le b^\bot$.
The left-hand side of this inequality is in $u$ by assumption, 
as is the right-hand side.
Recall $b \in v$ to conclude $u \perp v$.
We have established $u \in (\bigcup_i \widehat{a_i})^{*\circ*}$.
To show the other inclusion,
we prove $u \not\in (\bigcap_i \widehat{a_i})^{*\circ*}$
for $u \not \in \widehat{\bigvee_i a_i}$.
It suffices to exhibit $v \in X$ with the properties (\textdagger)
and $u\not\perp v$.
Let $b = (\bigvee_i a_i)^\bot$ and $v = \mathop\uparrow b$.
Since $b^\bot = \bigvee_i a_i$, the property (\textdagger) is satisfied.
Now, since $v$ is a principal filter, 
$u \perp v$ if and only if $b^\bot \in u$,
and we assumed otherwise.
Now we show that if $(\bigcup_i \widehat{a_i})^{*\circ*} \in \mathcal{COR}(X)$,
then $(\bigcup_i \widehat{a_i})^{*\circ*}$ 
is the least upper bound of $\{U_i\}_i$.
Clearly, it is an upper bound of the family,
so it suffices to show that if $\widehat c$ is an upper bound of the family,
then $(\bigcup_i \widehat{a_i})^{*\circ*} \subseteq \widehat c$.
Take an arbitrary $u$ in the left-hand side of the inequality.
Let $b = c^\bot$ and $v = \mathop\uparrow b$.
Since $c = b^\bot$, and $a_i \le c$ for all $i \in I$,
the property (\textdagger) is satisfied.
We conclude $u \perp v$, i.e., $c \in u$ as desired.

For part 3, we start by proving the left-to-right implication. Assume $L$ is a complete ortholattice so that for each $A\subseteq L$, we have that $\bigwedge A$ and $\bigvee A$ are defined. If $U\in \mathcal{O}(X)$, then by Definition \ref{spectral space}, we have that \[U=\bigcup\{V\in\mathcal{COR}(X)\mid V\subseteq U\}.\]  Since by hypothesis, $L$ is a complete ortholattice, by Theorem \ref{representation theorem}, so is the corresponding unique (up to isomorphism) ortholattice induced by $\mathcal{COR}(X)$ and hence $\bigvee\{V\subseteq\mathcal{COR}(X)\mid V\subseteq U\}$ exists. By our proof of part 2, we have 
\[
\bigvee\{V\subseteq\mathcal{COR}(X)\mid V\subseteq U\}=
\big(\bigcup\{V\subseteq\mathcal{COR}(X)\mid V\subseteq U\}\big)^{*\circ*},
\] 
which implies that $U^{*\circ*}\in\mathcal{COR}(X)$ as desired,
making $X$ a complete UVO-space by definition.
Conversely, suppose that $X$ is a complete UVO-space. Then for every family of subsets $\{U_i\}_{i\in I}\subseteq L$, we have $\bigcup_{i\in I}\widehat{a_i}\in\mathcal{O}(X)$. Since by hypothesis, $X$ is a complete UVO-space and so we have that $\big(\bigcup_{i\in I}\widehat{a_i}\big)^{*\circ*}\in\mathcal{COR}(X)$. 
By part 2, it follows that $\bigvee_{i\in I}a_i$ exists.
Finally, recall that since the orthocomplement operation of $L$ is an
isomorphism between $L$ and the order dual of $L$,
the ortholattice $L$ is complete if and only if arbitrary joins exist in $L$.
We conclude that $L$ is complete if $X$ is complete. 
\end{proof}

\subsection{Atoms}
\begin{notation}
Let $L$ be an ortholattice and let $X$ be a UVO-space. We write $\text{At}(L)$ to denote the set of all atoms of $L$ and write $X_{\text{iso}}$ to denote the set of all isolated points of $X$. 
\end{notation}
\begin{proposition}\label{atoms-iso points}
Given an ortholattice $L$ and its dual UVO-space $X$, the mapping $\text{At}(L)\to X_{\text{iso}}$ defined by $a\mapsto\uparrow\hspace{-.1cm}a$ is a bijection.  
\end{proposition}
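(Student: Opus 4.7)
The plan is to establish that $a \mapsto \uparrow\hspace{-.1cm}a$ is a bijection by separately verifying injectivity, well-definedness, and surjectivity, with the technical core being the claim that for any atom $a$, the principal filter $\uparrow\hspace{-.1cm}a$ is the \emph{only} proper filter of $L$ containing $a$, so that $\widehat{a} = \{\uparrow\hspace{-.1cm}a\}$. Injectivity of the map is immediate from antisymmetry, since $\uparrow\hspace{-.1cm}a = \uparrow\hspace{-.1cm}a'$ entails $a \leq a'$ and $a' \leq a$.

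For well-definedness, I would first observe that if $a \in \mathrm{At}(L)$ then $a \neq 0$, so $\uparrow\hspace{-.1cm}a$ is a proper filter and hence a point of $X = X^+_L$. I would then prove the uniqueness claim above, which is the main step: for any $y \in \widehat{a}$ one has $\uparrow\hspace{-.1cm}a \subseteq y$, and for every $b \in y$ the element $a \wedge b$ lies in $y$ and is below $a$, so by atomicity equals either $0$ or $a$; properness of $y$ rules out $0$, leaving $a \leq b$ and hence $b \in \uparrow\hspace{-.1cm}a$. Since $\widehat{a}$ is a basic open equal to $\{\uparrow\hspace{-.1cm}a\}$, the point $\uparrow\hspace{-.1cm}a$ lies in $X_{\mathrm{iso}}$.

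For surjectivity, I would take $x \in X_{\mathrm{iso}}$ and use that the sets $\widehat{a}$ form a basis for $X$ (Definition~\ref{spectral and patch topology}) to find some $a \in L$ with $x \in \widehat{a} \subseteq \{x\}$, so $\widehat{a} = \{x\}$. As $\uparrow\hspace{-.1cm}a \in \widehat{a}$, this forces $x = \uparrow\hspace{-.1cm}a$, and it only remains to verify that $a$ is an atom. Given any $b$ with $0 < b \leq a$, the principal filter $\uparrow\hspace{-.1cm}b$ is a proper filter containing $a$ (because $b \leq a$), hence lies in $\widehat{a} = \{\uparrow\hspace{-.1cm}a\}$, which forces $\uparrow\hspace{-.1cm}b = \uparrow\hspace{-.1cm}a$ and hence $b = a$ by the injectivity just noted. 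The main obstacle is the uniqueness claim $\widehat{a} = \{\uparrow\hspace{-.1cm}a\}$ for an atom $a$, but this reduces to a short atomicity calculation in elementary ortholattice theory; everything else is routine topological and order-theoretic bookkeeping.
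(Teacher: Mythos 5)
Your proposal is correct and follows essentially the same route as the paper: show $\widehat{a}=\{\uparrow\hspace{-.1cm}a\}$ for an atom $a$ to get well-definedness, injectivity from the order, and surjectivity by writing a singleton open set as a basic open $\widehat{a}$ and checking $a$ must be an atom. The only difference is that you supply the meet-based argument ($a\wedge b\in\{0,a\}$, properness excludes $0$) for the key identity $\widehat{a}=\{\uparrow\hspace{-.1cm}a\}$, which the paper simply asserts.
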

\begin{proof}
Note that if $a\in\text{At}(L)$, then $\widehat{a}=\{\uparrow\hspace{-.1cm}a\}$ and since $\widehat{a}\in\mathcal{O}(X_L^+)$, it follows that $\uparrow\hspace{-.1cm}a$ is an isolated point. It immediately follows that the map is injective since clearly for all $a,b\in L$, if $a\not=b$ then without loss of generality, there exists some $c\in\uparrow\hspace{-.1cm}a$ such that $c\not\in\uparrow\hspace{-.1cm}b$ so $\uparrow\hspace{-.1cm}a\not=\uparrow\hspace{-.1cm}b$. 

To see that the map is a surjection, note that if $x$ is an isolated point, then $\{x\}$ is an open set and since $\mathcal{COR}(X)$ forms a basis for a UVO-space $X$, we have that $\{x\}\in\mathcal{COR}(X_L^+)$. Then by Theorem \ref{representation theorem}, there exists some $a\in L$ such that $\widehat{a}=\{x\}$ which implies that $a\in\text{At}(L)$. On the other hand, if $a\not\in\text{At}(L)$, then there exists some $0\not=b\in L$ such that $b<a$ but this implies that $\uparrow\hspace{-.1cm}a,\uparrow\hspace{-.1cm}b\in\mathfrak{F}(L)$ are such that $\uparrow\hspace{-.1cm}a\not=\uparrow\hspace{-.1cm}b$ with $\uparrow\hspace{-.1cm}a,\uparrow\hspace{-.1cm}b\in\widehat{a}$. Lastly, note that since $a\in\text{At}(L)$, we have $\widehat{a}=\{\uparrow\hspace{-.1cm}a\}$ which means that $x=\uparrow\hspace{-.1cm}a$.  
\end{proof}

\subsection{Atomic lattices and atomless lattices}
 Recall that a lattice $L$ is \emph{atomless} if $L$ contains no atoms and is \emph{atomic} if every element $a\in L$ can be written as a possibly infinite join of atoms. The following UVO-space characterization of an atomless ortholattice is an immediate corollary of Proposition \ref{atoms-iso points}.   
\begin{corollary}\label{atomless lattice} 
Let $L$ be an otholattice and let $X$ be its dual UVO-space. Then, $L$ is atomless if and only if $X_{\text{iso}}=\emptyset$. 
\end{corollary}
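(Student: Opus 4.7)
The plan is to leverage Proposition \ref{atoms-iso points} essentially as a black box. That proposition produces an explicit bijection $\varphi\colon \mathrm{At}(L) \to X_{\mathrm{iso}}$ given by $a \mapsto \mathop\uparrow a$. Since a bijection between two sets holds if and only if they share cardinality, in particular one side is empty precisely when the other is. Therefore $\mathrm{At}(L) = \emptyset \iff X_{\mathrm{iso}} = \emptyset$.

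To finish, I would simply unfold the definition: by definition $L$ is atomless exactly when $L$ contains no atoms, i.e.\ $\mathrm{At}(L) = \emptyset$. Chaining this with the equivalence above yields $L$ atomless $\iff X_{\mathrm{iso}} = \emptyset$, which is the desired biconditional.

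I anticipate no real obstacle here, since all the nontrivial content (the correspondence between atoms of $L$ and isolated points of $X$, and the fact that principal filters generated by atoms are precisely the isolated points) is already packaged inside Proposition \ref{atoms-iso points}. The only thing to be careful about stylistically is to present the argument as a one- or two-line consequence rather than re-proving the bijection; writing it as a short chain of equivalences $L \text{ atomless} \iff \mathrm{At}(L) = \emptyset \iff X_{\mathrm{iso}} = \emptyset$ with the second equivalence justified by citing Proposition \ref{atoms-iso points} should suffice.
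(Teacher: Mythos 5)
Your proposal is correct and matches the paper's own proof, which likewise cites Proposition \ref{atoms-iso points} to obtain the bijection $\mathrm{At}(L)\to X_{\text{iso}}$ and concludes that one set is empty exactly when the other is. No further comment is needed.
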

\begin{proof}
Since by Proposition \ref{atoms-iso points}, the collection of atoms \emph{At}(L) of an ortholattice $L$ are in bijection with the isolated points $X_{\emph{iso}}$ of its corresponding dual UVO-space $X$, it is clear that the collection of isolated points in $X$ is empty if and only if there exist no atoms in $L$.  
\end{proof}
\begin{proposition}
Let $L$ be an ortholattice and let $X$ be its dual UVO-space. Then, the following statements are equivalent:
\begin{enumerate}
    \item $L$ is atomic;
    \item $\text{Cl}(X_{\text{iso}})=X$.
\end{enumerate}
\end{proposition}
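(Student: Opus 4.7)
The plan is to translate through the bijection $\mathrm{At}(L) \leftrightarrow X_{\mathrm{iso}}$, $b \mapsto {\uparrow}b$, provided by Proposition~\ref{atoms-iso points}, combined with the observation that a basic open $\widehat{a}$ of $X$ is nonempty exactly when $a \neq 0$ (the principal filter ${\uparrow}a$ witnessing this). Via Theorem~\ref{representation theorem}, the lattice-theoretic condition on $L$ is then read off a topological density condition on $X$.

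For the forward implication $(1) \Rightarrow (2)$, I will show that every nonempty basic open of $X$ meets $X_{\mathrm{iso}}$. Taking $x \in X$ and any basic neighborhood $\widehat{a}$ of $x$, properness of the filter $x$ forces $a \neq 0$, so by atomicity $a$ admits a representation as a join of atoms, producing an atom $b$ with $b \leq a$. Then ${\uparrow}b$ is isolated by Proposition~\ref{atoms-iso points} and lies in $\widehat{a}$ since $a \in {\uparrow}b$. Because the sets $\widehat{a}$ form a basis for $X$, this yields $\mathrm{Cl}(X_{\mathrm{iso}}) = X$.

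For the converse $(2) \Rightarrow (1)$, given a nonzero $a \in L$, the nonempty open set $\widehat{a}$ must meet $X_{\mathrm{iso}}$ by density, producing an isolated point ${\uparrow}b \in \widehat{a}$ and thus an atom $b \leq a$. To upgrade the mere existence of atoms below each nonzero element to the full atomistic decomposition, I will let $S_a = \{b \in \mathrm{At}(L) : b \leq a\}$ and $I_a = \widehat{a} \cap X_{\mathrm{iso}}$, and invoke Proposition~\ref{complete lattice}.\ref{complete 2}, which identifies the join $\bigvee_{b \in S_a}\widehat{b}$ in $\mathcal{COR}(X)$ with $(I_a)^{*\circ*}$. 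The task then reduces to establishing $(I_a)^{*\circ*} = \widehat{a}$, after which Theorem~\ref{representation theorem} supplies $a = \bigvee S_a$ back in $L$.

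The main obstacle will be this last identification $(I_a)^{*\circ*} = \widehat{a}$. Topological density of $I_a$ inside the open set $\widehat{a}$ alone only yields $\widehat{a} \subseteq \mathrm{Cl}(\widehat{c})$ whenever $\widehat{c}$ is an upper bound of $\{\widehat{b} : b \in S_a\}$, and passing from this closure-level inclusion to the desired $\widehat{a} \subseteq \widehat{c}$ requires the orthogonality structure, not mere topology. My plan for bridging this gap is to compute $I_a^* = \bigcap_{b \in S_a}\widehat{b^{\perp}}$ directly from the definition of $\perp_L$ and then exploit the Galois property $Y \subseteq Y^{**}$, together with the orthoregularity of each $\widehat{c} \in \mathcal{COR}(X)$, to pin $(I_a)^{*\circ *}$ down to exactly $\widehat{a}$.
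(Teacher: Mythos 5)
Your forward direction is fine and is essentially the paper's argument: for a nonzero $a$, a representation of $a$ as a join of atoms forces that join to be over a nonempty set, giving an atom $b\le a$, hence an isolated point ${\uparrow}b\in\widehat{a}$, and density follows because the $\widehat{a}$ form a basis.

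The converse breaks down at exactly the step you flagged as the main obstacle: the identity $(I_a)^{*\circ*}=\widehat{a}$ is false in general, so no amount of manipulation of the Galois connection will deliver it. Take $L$ to be the benzene ring $O_6$ with $0<a<b<1$ and $0<b^{\perp}<a^{\perp}<1$. Its atoms are $a$ and $b^{\perp}$, every nonzero element dominates one of them, and in the dual space (five principal filters) the isolated points ${\uparrow}a$ and ${\uparrow}b^{\perp}$ are dense, so statement (2) holds. But the only atom below $b$ is $a$, so $I_b=\widehat{b}\cap X_{\mathrm{iso}}=\widehat{a}$, and since $\widehat{a}^{*}=\widehat{a^{\perp}}$ is already open, $(I_b)^{*\circ*}=(\widehat{a})^{**}=\widehat{a^{\perp\perp}}=\widehat{a}\neq\widehat{b}$; correspondingly $b$ is not a join of atoms in $O_6$. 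What this really exposes is that the proposition is false under the paper's stated definition of \emph{atomic} (every element is a join of atoms, i.e.\ atomisticity); it is true only for the standard weaker notion (every nonzero element dominates an atom), and the paper's own proof of the converse silently proves only that weaker statement --- it exhibits an atom below each nonzero $a$ and stops there. Your instinct that the stated definition demands an extra ``upgrade'' step was sound, but the correct resolution is to weaken the definition (or restrict to a class of ortholattices where the two notions coincide), not to attempt the upgrade, which cannot succeed.
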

\begin{proof}
To show the forward implication,
assume that $L$ is complete,
and take $u \in X$.
It suffices to show that for every basic open neighborhood $U \ni u$,
the subset $X_{\text{iso}}$ intersects with $U$ nontrivially.
Find $a \in L \setminus \{0\}$ such that $U = \widehat a$.
By atomicity, there is an atom $b \le a$,
i.e., $\mathop\uparrow b \in \widehat a$, 
and $\mathop\uparrow b \in X_{\text{iso}}$.

To show the other implication,
let $a \in L \setminus \{0\}$ be arbitrary.
We will find an atom $b \le a$.
Consider $\mathop\uparrow a \in X$ and a neighborhood $\widehat a$.
Since $X_{\text{iso}}$ is dense, $\widehat a$ intersects
nontrivially with $X_{\text{iso}}$ at, say, $u$.
Recall that $u$ is of the form $\mathop\uparrow b$ for some atom $b$.
Since $\mathop\uparrow b \in \widehat a$, we have $b \le a$.
\end{proof}
\subsection{Injective and surjective homomorphism} 
\begin{definition}\label{uvo embedding}
Let $X$ and $Y$ be UVO-spaces. A UVO-map $f\colon X\to Y$ is a \textit{UVO-embedding} if $f$ is injective and for every $U\in\mathcal{COR}(X)$, there exists some $V\in\mathcal{COR}(Y)$ such that $f[U]=f[X]\cap V$. 
\end{definition}
\begin{proposition}\label{surj and embedding}
Let $L$ and $L'$ be ortholattices, let $h\colon L\to L'$ be an ortholattice homomorphism, and let $h_+\colon X^+_{L'}\to X^+_L$ be the corresponding dual UVO-map of $h$. 
Then, $h_+$ is a surjective UVO-map if $h$ is an injective ortholattice homomorphism.
Moreover, if $h_+$ is a UVO-embedding if $h$ is a surjective ortholattice homomorphism.
\end{proposition}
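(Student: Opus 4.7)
For part (1), I would prove surjectivity of $h_+$ directly by the same filter-generating trick used in the proof of Theorem~\ref{main theorem}. Given an arbitrary $y\in X_L^+$, let $x$ be the filter of $L'$ generated by $h[y]$. Because $y$ is closed under finite meets and $h$ preserves meets, this filter admits the concrete description
\[
x = \{b\in L' \mid b\ge h(a)\text{ for some }a\in y\}.
\]
I would then verify three things: first, that $x$ is proper, which is where injectivity of $h$ enters: if $0_{L'}\in x$ then $h(a)\le 0_{L'}=h(0_L)$ for some $a\in y$, so $h(a)=h(0_L)$, whence $a=0_L$ by injectivity, contradicting $y\in\mathfrak{F}(L)$; second, that $y\subseteq h^{-1}[x]$, which is immediate; and third, that $h^{-1}[x]\subseteq y$. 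For this last point, if $c\in h^{-1}[x]$ then $h(c)\ge h(a)$ for some $a\in y$, so $h(c\wedge a)=h(a)$, and injectivity forces $c\wedge a=a$, i.e., $c\ge a\in y$, so $c\in y$.

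For part (2), I would first establish that $h_+$ is injective using surjectivity of $h$: if $h_+(x)=h_+(x')$ and $b\in x$, then $b=h(a)$ for some $a\in L$, so $a\in h^{-1}[x]=h^{-1}[x']$, hence $b=h(a)\in x'$; the reverse inclusion is symmetric. I would then verify the embedding condition from Definition~\ref{uvo embedding}. Given $U\in\mathcal{COR}(X_{L'}^+)$, Theorem~\ref{representation theorem} gives $U=\widehat b$ for some $b\in L'$, and surjectivity of $h$ yields $a\in L$ with $h(a)=b$. Taking $V=\widehat a\in\mathcal{COR}(X_L^+)$, I would show
\[
h_+[\widehat b] = h_+[X_{L'}^+]\cap\widehat a.
\]
The left-to-right inclusion is: if $x\in\widehat b$, then $h(a)=b\in x$, so $a\in h^{-1}[x]=h_+(x)$, hence $h_+(x)\in\widehat a$. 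For the reverse inclusion, any element of $h_+[X_{L'}^+]\cap\widehat a$ equals $h_+(x)$ for some $x\in X_{L'}^+$ with $a\in h^{-1}[x]$, so $b=h(a)\in x$, i.e., $x\in\widehat b$, giving $h_+(x)\in h_+[\widehat b]$.

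The main obstacle, such as it is, lies in part (1): one must choose the right candidate preimage filter and then exploit injectivity twice, once to rule out $0_{L'}\in x$ (properness) and once to reflect the order $h(c)\ge h(a)\Rightarrow c\ge a$. Both uses are transparent and no nonconstructive choice principle is invoked, since the filter $x$ is defined explicitly from $h[y]$. Part (2) is then essentially a book-keeping exercise built on Theorem~\ref{representation theorem} and the surjectivity hypothesis, with the only nontrivial observation being the use of surjectivity to pull basic opens of $X_{L'}^+$ back to basic opens of $X_L^+$ via preimages under $h$.
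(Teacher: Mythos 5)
Your proof is correct and follows essentially the same route as the paper: for part (1) generate a filter of $L'$ from the $h$-image of the given filter and use injectivity to show it is proper and pulls back to the original, and for part (2) use surjectivity of $h$ to write each basic open $\widehat{b}$ of $X^+_{L'}$ as pulling back along $h_+$ to $\widehat{a}$ with $h(a)=b$. Your explicit verification in part (1) that $h^{-1}[x]\subseteq y$ (the second use of injectivity, via $h(c\wedge a)=h(a)\Rightarrow c\wedge a=a$) is a worthwhile addition, since the paper asserts that the generated filter is ``clearly'' the inverse $h$-filter without spelling this out.
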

\begin{proof}
For the first part, assume that $h\colon L\to L'$ is an injective ortholattice homomorphism, and let $x\in X^+_L$ and $y=\{b\in L\mid \exists a\in h[x]:a\leq b\}$.  Clearly, $y$ is the inverse $h$-image of $x$.  We want to show that $y$ is a proper filter. 
To see this, note that if $0'\in y$, then $0'\in h[x]$ which implies the existence of some $a\in x$ such that $h(a)=0'$. By hypothesis, $x$ is a proper filter, which implies that $a\not=0$, but this contradicts the fact that $h(0)=0'$ together with our hypothesis that $h$ is injective. 
Hence,
it follows that $h_+$ is a surjective UVO-map.   

For the second part, let $x,y\subseteq L$ be filters such that $x\not=y$. Without loss of generality, there exists some $a\in L$ such that $a\in x$ but $a\not\in y$. By hypothesis, $h$ is a surjective ortholattice homomorphism and, therefore, there exists some $b\in L$ such that $h(b)=a$. 
It is easy to see that $b\in h^{-1}[x]$ and $b\not\in h^{-1}[y]$, which implies that $h^{-1}[x]\not= h^{-1}[y]$. Hence, $h_+$ is an injective UVO-map.

To see that $h_+$ satisfies the UVO-embedding condition,
first take an arbitrary $U\in\mathcal{COR}(X^+_{L'})$.
Note that by Theorem \ref{representation theorem},
$U$ is of the form $\widehat{a}$ for some $a\in L'$.
Again, by our hypothesis that $h$ is a surjective homomorphism,
there exists some $b\in L$ such that $h(b)=a$.
It suffices to show that $h_+[\widehat a] = h_+[X^+_{L'}] \cap \widehat b$.
To show the left-to-right inclusion,
take an arbitrary $x' \in h_+[\widehat a]$, i.e.,
$x = h^{-1}[x']$ for some $x \in \widehat a$.
We now have $b \in x$, and we are done.
To see the other inclusion,
take an arbitrary $x \in h_+[X^+_{L'}] \cap \widehat b$.
There exists $x' \in X^+_{L'}$ for which $x = h_+(x')$, i.e., $x = h^{-1}[x']$.
Since $b \in x$ as well, we conclude that $x' \in \widehat a$.
This shows that $x \in h_+[\widehat a]$.
\end{proof}
\begin{proposition}\label{inj and surj}
Let $X$ and $X'$ be UVO-spaces, let $f\colon X\to X'$ be a UVO-map, and let $f^+\colon\mathcal{COR}(X')\to\mathcal{COR}(X)$ be the corresponding  ortholattice homomorphism dual to $f$. Then, $f^+$ is an injective ortholattice homomorphism if $f$ is a surjective UVO-map.
Moreover, the map $f^+$ is a surjective ortholattice homomorphism if $f$ is a UVO-embedding. 
\end{proposition}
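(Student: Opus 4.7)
The plan is to prove the two implications separately, taking both to be essentially set-theoretic arguments that exploit the defining conditions for UVO-maps and UVO-embeddings, together with the fact that $f^+(U) = f^{-1}[U]$ by definition.

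For the first implication, I would argue by contraposition on $f^+$ or, more cleanly, directly. Assume $f\colon X\to X'$ is surjective, and take $U, V \in \mathcal{COR}(X')$ with $U \neq V$. Without loss of generality, choose $y \in U \setminus V$. Surjectivity of $f$ yields $x \in X$ with $f(x) = y$, and then $x \in f^{-1}[U] = f^+(U)$ while $x \notin f^{-1}[V] = f^+(V)$. Hence $f^+(U) \neq f^+(V)$, showing $f^+$ is injective.

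For the second implication, assume $f$ is a UVO-embedding. Given an arbitrary $V \in \mathcal{COR}(X)$, I need to produce $U \in \mathcal{COR}(X')$ with $f^+(U) = V$, that is, $f^{-1}[U] = V$. Definition \ref{uvo embedding} supplies some $U \in \mathcal{COR}(X')$ such that $f[V] = f[X] \cap U$. The inclusion $V \subseteq f^{-1}[U]$ is then immediate, since $x \in V$ forces $f(x) \in f[V] \subseteq U$. The reverse inclusion is where injectivity of $f$ (which is part of the UVO-embedding hypothesis) plays a role: if $x \in f^{-1}[U]$, then $f(x) \in f[X] \cap U = f[V]$, so $f(x) = f(x')$ for some $x' \in V$, and injectivity forces $x = x' \in V$. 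This establishes $f^+(U) = V$.

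I do not anticipate any real obstacle here; the argument is essentially a transcription of the definitions of UVO-map, UVO-embedding, and the assignment $f \mapsto f^+$ given at the end of the proof of Theorem \ref{main theorem}. The one point worth emphasizing explicitly is that the reverse inclusion $f^{-1}[U] \subseteq V$ in the second part genuinely requires the injectivity clause of Definition \ref{uvo embedding}, not merely the image-matching condition $f[V] = f[X] \cap U$; dropping injectivity would only yield $f^{-1}[U] \subseteq f^{-1}[f[V]]$, which in general properly contains $V$.
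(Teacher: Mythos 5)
Your proof is correct and follows essentially the same route as the paper: the first part is the same surjectivity argument verbatim, and the second part uses the same image-matching set $U$ from Definition~\ref{uvo embedding} together with injectivity, merely written out elementwise where the paper instead computes $f^{-1}[f[V]\,]=f^{-1}[f[X]\cap U]=f^{-1}[U]$ and invokes $f^{-1}[f[V]]=V$. Your closing remark about where injectivity is genuinely needed matches the paper's (implicit) use of it exactly.
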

\begin{proof}
For the first part, let $X$ and $X'$ be UVO-spaces, and let $f\colon X\to X'$ be a surjective UVO-map. Now suppose that $U,V\in\mathcal{COR}(X)$ are such that $U\not=V$. Without loss of generality, if $y\in U\setminus V$, then since $f$ is surjective, there exists some $x\in X$ such that $f(x)=y$ so $x\in f^{-1}[U]$ and $x\not\in f^{-1}[V]$. Since $f^{-1}[U]=f^+(U)$ and $f^{-1}[V]=f^+(V)$, we have $f^+(U)\not=f^+(V)$. Hence, $f^+$ is an injective ortholattice homomorphism. 

For the second part, let $X$ and $X'$ be UVO-spaces and let $f\colon X\to X'$ be a UVO-embedding. If $U\in\mathcal{COR}(X)$, then since $f$ is a UVO-embedding, by Definition \ref{uvo embedding}, there exists some $V\in\mathcal{COR}(X')$ such that $f[U]=f[X]\cap V$, which implies that $f^{-1}[f[U]]=f^{-1}[f[X]\cap V]$. Now observe that \[f^{-1}[f[X]\cap V]=f^{-1}[f[X]]\cap f^{-1}[V]=X\cap f^{-1}[V]=f^{-1}[V].\] By hypothesis, $f$ is a UVO-embedding and therefore injective, which guarantees that $f^{-1}[f[U]]=U$ so $f^{-1}[V]=U$ and since $f^{-1}[V]=f^+(V)$, we have $f^+(V)=U$, as desired.  
\end{proof}
\subsection{Subalgebra}
\begin{corollary}
Let $L$ be an ortholattice and let $X$ be its dual UVO-space. Then, there exists a one-to-one correspondence between the subalgebras of $L$ and the images via surjective UVO-maps of $X$. 
\end{corollary}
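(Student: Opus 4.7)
The plan is to deduce the corollary from the duality theorem (Theorem \ref{main theorem}) together with Propositions \ref{surj and embedding} and \ref{inj and surj}. The key observation is that a subalgebra of $L$ can be identified (up to isomorphism of abstract ortholattices fitting into an inclusion) with an equivalence class of injective ortholattice homomorphisms $h \colon L' \hookrightarrow L$ under the relation that identifies $h$ and $h' \colon L'' \hookrightarrow L$ when there is an ortholattice isomorphism $j \colon L' \to L''$ with $h' \circ j = h$. Dually, I would take an ``image of $X$ via a surjective UVO-map'' to mean an equivalence class of surjective UVO-maps $f \colon X \twoheadrightarrow Y$, where $f$ and $f' \colon X \twoheadrightarrow Y'$ are identified when there is a UVO-isomorphism $k \colon Y \to Y'$ with $k \circ f = f'$.

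First I would define the forward map: given a subalgebra $\iota \colon L' \hookrightarrow L$, assign the surjective UVO-map $\iota_+ \colon X^+_L \twoheadrightarrow X^+_{L'}$ supplied by Proposition \ref{surj and embedding}. I would then define the inverse map: given a surjective UVO-map $f \colon X \twoheadrightarrow Y$, apply Proposition \ref{inj and surj} to obtain an injective ortholattice homomorphism $f^+ \colon \mathcal{COR}(Y) \hookrightarrow \mathcal{COR}(X)$, and compose with the representation isomorphism $\widehat{\bullet} \colon L \to \mathcal{COR}(X^+_L) = \mathcal{COR}(X)$ of Theorem \ref{representation theorem} to obtain a subalgebra of $L$.

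That these assignments are mutually inverse follows from the naturality statements established at the end of the proof of Theorem \ref{main theorem}. Specifically, the commuting square in $\mathbf{OrthLatt}$ applied to $\iota \colon L' \hookrightarrow L$ shows that $(\iota_+)^+$ agrees with $\iota$ modulo the natural isomorphism $\eta_{L'}, \eta_L$, so going subalgebra $\to$ surjection $\to$ subalgebra returns the original up to the chosen representative of the equivalence class. Symmetrically, the commuting square in $\mathbf{UVO}$ applied to $f \colon X \twoheadrightarrow Y$ shows that $(f^+)_+$ agrees with $f$ modulo the natural isomorphism $\theta_Y, \theta_X$, so going surjection $\to$ subalgebra $\to$ surjection returns the original up to UVO-isomorphism of codomains.

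The main obstacle is purely a bookkeeping one: making precise the equivalence relations on each side so that the correspondence is literally a bijection of sets rather than just an equivalence of (small) groupoids, and checking that the identifications induced by $\eta$ and $\theta$ respect these equivalence relations. I would not expect any new mathematical content here beyond what is already encoded in Theorem \ref{main theorem}; the work is simply to carry the natural isomorphisms $\eta_L$ and $\theta_X$ through the definitions of the two assignments.
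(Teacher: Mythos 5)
Your proposal is correct and follows essentially the same route as the paper: the paper's proof simply cites Theorem \ref{main theorem} together with the first parts of Propositions \ref{surj and embedding} and \ref{inj and surj}, exactly the ingredients you use. Your additional care in formulating the equivalence relations on both sides and invoking the naturality squares makes explicit the bookkeeping that the paper leaves implicit in the phrase ``follows immediately.''
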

\begin{proof}
The result follows immediately by Theorem \ref{main theorem}, the first part of Proposition \ref{surj and embedding} (i.e., that $h_+$ is a surjective UVO-map if its dual ortholattice homomorphism $h$ is injective), and the first part of Proposition \ref{inj and surj} (i.e., that $f^+$ is an injective ortholattice homomorphism if its dual UVO-map $f$ is surjective).  
\end{proof}
\subsection{Direct product}
\begin{definition}\label{uvo sum}
  If $X$ and $Y$ are UVO-spaces, then their \emph{UVO-sum} $X+Y$ is the space whose underlying carrier set is of the following shape \[X+Y:=X\cup Y\cup (X\times Y)\] and whose topology is generated by sets of the form $U\cup V\cup(U\times V)$ for $U\in\mathcal{COR}(X)$ and $V\in\mathcal{COR}(Y)$,
  together with the orthogonality relation $\perp_{X+Y}$,
  which is defined as the symmetric closure of:
  \begin{align*}
    \bot_X\cup\bot_Y &\cup (X \times Y) \\
    &\cup\{\langle\langle x,y\rangle,x'\rangle\mid x\perp_Xx'\}\cup\{\langle\langle x,y\rangle,y'\rangle\mid y\perp_Yy'\}\\
    &\cup
    \{\langle x,y\rangle,\langle x',y'\rangle\mid x\perp_X x',y\perp_Yy'\}.
  \end{align*}
\end{definition}

\begin{proposition}
Let $X$ and $Y$ be UVO-spaces whose specialization orders are $\leqslant_X$ and $\leqslant_Y$ respectively. Then, the specialization order $\leqslant_{X+Y}$ of their UVO-sum $X+Y$ is given by:  \[\Omega_{\leqslant}:=\leqslant_X\cup\leqslant_Y\cup\{\langle\langle x,y\rangle,x'\rangle\mid x\leqslant_Xx'\}\cup\{\langle\langle x,y\rangle,y'\rangle\mid y\leqslant_Yy'\}\cup\]\[\{\langle x,y\rangle,\langle x',y'\rangle\mid x\leqslant_X x',y\leqslant_Yy'\}.\]
\end{proposition}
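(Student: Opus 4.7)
The plan is to determine the specialization preorder $\leqslant_{X+Y}$ directly from its definition: $p \leqslant_{X+Y} q$ iff every basic open of $X+Y$ containing $p$ also contains $q$. Since the basic opens have the explicit shape $U \cup V \cup (U \times V)$ with $U \in \mathcal{COR}(X)$ and $V \in \mathcal{COR}(Y)$, this should be a routine case analysis over the seven relevant combinations of ``types'' of $p$ and $q$ according to the partition $X+Y = X \cup Y \cup (X \times Y)$ (which I will treat as genuinely disjoint, as is forced by the UVO-sum construction).

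Two preparatory observations are needed. First, by Lemma~\ref{space to lattice} each $\mathcal{COR}(Z)$ is an ortholattice, so $\emptyset \in \mathcal{COR}(Z)$ and $Z \in \mathcal{COR}(Z)$; consequently the sets $X$, $Y$, and $X+Y$ itself are basic opens of $X+Y$ (take $(U,V) = (X,\emptyset)$, $(\emptyset,Y)$, $(X,Y)$ respectively). Second, any basic open containing a mixed point $(x,y) \in X \times Y$ must have $x \in U$ and $y \in V$ with both $U,V$ nonempty, so such a basic open automatically contains the pieces $U \subseteq X$ and $V \subseteq Y$. With these in hand I would proceed as follows.

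For $p, q$ both in $X$: using $W = X$ as a basic open forces $q \in X$, after which the basic opens containing $p$ that lie entirely in $X$ are exactly the elements of $\mathcal{COR}(X)$ (take $V = \emptyset$), so $p \leqslant_{X+Y} q$ collapses to $p \leqslant_X q$. The case $p,q \in Y$ is symmetric. For $p \in X$, $q \in Y$ (or vice versa) the basic open $W = X$ separates them, so no such pairs lie in $\leqslant_{X+Y}$, and similarly $W = X$ or $W = Y$ rules out $p \in X \cup Y$ with $q \in X \times Y$. For $p = (x,y)$, $q = x' \in X$: the reverse direction uses $W = U \cup Y \cup (U \times Y)$ for any $U \ni x$ in $\mathcal{COR}(X)$, which contains $(x,y)$ and forces $x' \in U$ by disjointness; the forward direction is immediate since any basic open containing $(x,y)$ contains the corresponding $U \ni x$. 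The symmetric case $q = y'$ is analogous. Finally for $p = (x,y)$, $q = (x',y')$: an arbitrary basic open $U \cup V \cup (U \times V)$ containing $(x,y)$ forces $(x',y') \in U \times V$ by disjointness, giving $x \leqslant_X x'$ and $y \leqslant_Y y'$; the converse uses that $(x',y') \in U \times V$ whenever $x' \in U$ and $y' \in V$.

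I expect no serious obstacle: the main care is in handling the disjointness of the three strata of $X+Y$ correctly and in exploiting that $\emptyset$ and the whole space belong to $\mathcal{COR}$ in both factors, so that one can produce the separating basic opens $X$ and $Y$ as well as the ``slab'' neighborhoods $U \cup Y \cup (U \times Y)$ and $X \cup V \cup (X \times V)$ needed to project specialization information down to $\leqslant_X$ and $\leqslant_Y$.
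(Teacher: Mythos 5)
Your proof is correct and takes essentially the same route as the paper's: unwinding the definition of the specialization order against the basic opens $U\cup V\cup(U\times V)$ of $X+Y$ and doing a case analysis on the strata of $p$ and $q$. You are in fact more explicit than the paper, which verifies only two cases of the inclusion $\Omega_{\leqslant}\subseteq\mathord{\leqslant_{X+Y}}$ and declares the remaining cases and the entire converse direction ``analogous''; your observation that $\emptyset$ and the whole space lie in $\mathcal{COR}$ of each factor, yielding the separating basic opens $X$, $Y$ and the slab neighborhoods $U\cup Y\cup(U\times Y)$, supplies exactly the witnesses that converse needs.
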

\begin{proof}
  Assume that $\langle z,z'\rangle\in\Omega_{\leqslant}$ such that $z\in W= U\cup V\cup (U\times V)\in\mathcal{O}(X+Y)$ for $U\in\mathcal{COR}(X)$ and $V\in\mathcal{COR}(Y)$. We want to show that $z'\in W$. In the case when $z\leqslant_Xz'$, we have $z\in U\in\mathcal{COR}(X)$ so $z'\in U\in\mathcal{COR}(X)$, hence $z'\in W$. In the case when $z=\langle x,y\rangle$, we have $\langle x,y\rangle\in U\times V$ with $x\in U\in\mathcal{COR}(X)$ and $y\in V\in\mathcal{COR}(Y)$. Thus, if $x\leqslant_Xz'$, it follows that $z'\in U\in\mathcal{COR}(X)$ and therefore, $z'\in W$. The proof of the case for $z\leqslant_Yz'$ and the case for $z'=\langle x',y'\rangle$, $x\leqslant_Xx'$, and $y\leqslant_Yy'$ run analogously, as does the converse direction under the assumption that $\langle z,z'\rangle\not\in\Omega_{\leqslant}$.  
\end{proof}
The following result follows from Theorem \ref{characterization theorem} and is useful in diagrammatically presenting examples of finite UVO-spaces in terms of the specialization ordering of their points.   
\begin{proposition} 
  If $X$ is a finite UVO-space, then $(X,\leqslant)$ can be constructed from an ortholattice by deleting its bottom universal bound
  and taking its order-theoretic dual.  
\end{proposition}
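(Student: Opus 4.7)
The plan is to invoke Theorem \ref{characterization theorem} to identify the finite UVO-space $X$ with $X^+_L$, where $L := \mathcal{COR}(X)$ is the dual ortholattice, and then show that the specialization order on the latter is isomorphic to the order-theoretic dual of $L \setminus \{0\}$. By that characterization theorem, there is a homeomorphism $g\colon X \to X^+_L$ preserving and reflecting the orthospace reducts, and in particular an order isomorphism with respect to the specialization orders (which, by Proposition \ref{lattice to space}, is just set-theoretic inclusion on the proper-filter side).

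The key observation is that when $L$ is finite, every proper lattice filter $x \in \mathfrak{F}(L)$ is principal: since $x$ is finite, the meet $a := \bigwedge x$ lies in $x$, so $x = \mathop\uparrow a$, and $a \neq 0$ because $x$ is proper. Conversely, each $a \in L \setminus \{0\}$ generates a proper principal filter $\mathop\uparrow a$. Hence the map $\varphi\colon L \setminus \{0\} \to X^+_L$, $a \mapsto \mathop\uparrow a$, is a bijection.

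I would then verify that $\varphi$ reverses the order: for $a, b \in L \setminus \{0\}$, the containment $\mathop\uparrow a \subseteq \mathop\uparrow b$ holds if and only if $b \leq a$ (the ``only if'' direction uses $a \in \mathop\uparrow a \subseteq \mathop\uparrow b$, and the converse is immediate by transitivity). Combined with the order-isomorphism $g$, this yields an isomorphism of posets
\[
(X, \leqslant) \;\cong\; (X^+_L, \subseteq) \;\cong\; (L \setminus \{0\}, \leq)^{\mathrm{op}},
\]
which is exactly the assertion that $(X, \leqslant)$ is obtained from the ortholattice $L$ by deleting its bottom universal bound and taking the order-theoretic dual.

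The argument is essentially routine given the earlier results; the only step requiring a small observation is that finiteness forces every proper filter to be principal, and this is where the hypothesis is genuinely used. No obstacle is anticipated beyond carefully recording the contravariant nature of the correspondence $a \mapsto \mathop\uparrow a$ with respect to inclusion, which flips the order exactly once and accounts for the appearance of the order-theoretic dual in the statement.
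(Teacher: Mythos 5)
Your proposal is correct and follows essentially the same route as the paper's proof: identify $X$ with $X^+_{\mathcal{COR}(X)}$ via the characterization theorem, note that the specialization order becomes set-theoretic inclusion of proper filters, and use finiteness to see that every proper filter is principal, giving the order-reversing bijection with $\mathcal{COR}(X)\setminus\{\emptyset\}$. Your write-up is in fact slightly more explicit than the paper's about why finiteness forces principality and why $a\mapsto\mathop{\uparrow}a$ reverses the order.
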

\begin{proof}
  First note that by Lemma \ref{space to lattice}, if $X$ is a finite UVO-space, then $\mathcal{COR}(X)$ is a finite ortholattice. By Theorem \ref{characterization theorem}, $X$ is homeomorphic to the space $X^+_{\mathcal{COR}(X)}$ of proper lattice filters of $\mathcal{COR}(X)$, which is a $T_0$ space by Lemma \ref{lattice to uvo-space}. Hence, $(X,\leqslant)$ is order isomorphic to the poset $(X^+_{\mathcal{COR}(X)},\subseteq)$ of proper lattice filters of $\mathcal{COR}(X)$ ordered by set-theoretic inclusion. Since any filter of a finite ortholattice is a principal filter, we have that $(X,\leqslant)$ is isomorphic with respect to the ortholattice of proper principal filters of $\mathcal{COR}(X)$, which is isomorphic  to the lattice $(\mathcal{COR}(X)\setminus \{\emptyset\}, \supseteq)$.   
\end{proof}

\begin{example}
Consider the ortholattices $O_2$ and $O_6$, along with their respective UVO-spaces $X^+_{O_2}$ and $X^+_{O_6}$ depicted in Figure \ref{examples of uvo spaces}. The direct product $O_2\times O_6$ and its UVO-sum $X^+_{O_2}+X^+_{M_3}$ are depicted in Figure \ref{example of uvo sum}. For the purpose of visualizing these examples, we represent the specialization order between two points of a UVO-space by a dotted line and the underlying partial ordering of an ortholattice by a solid line. We omit the orthogonality relations of these UVO-spaces from these diagrams for the sake of simplicity, but still explicitly describe them below.  
\[\perp_{X^+_{O_6}}=\{\langle y_1,y_2\rangle,\langle y_1,y_4\rangle,\langle y_3,y_2\rangle,\langle y_3,y_4\rangle\},\hspace{.5cm}\perp_{X^+_{O_2}}=\emptyset\]
\[\perp_{X^+_{O_2}+X^+_{O_6}}=\{\perp_{X^+_{O_6}},\perp_{X^+_{O_2}},\langle\langle z,y_1\rangle,y_2\rangle,\langle\langle z,y_1\rangle,y_4\rangle,\langle\langle z,y_2,\rangle y_1\rangle,\]\[\langle\langle z,y_2\rangle,y_3\rangle,\langle\langle z,y_3\rangle,y_2\rangle,\langle\langle z,y_3\rangle,y_4\rangle,\langle\langle z,y_4\rangle,y_1\rangle,\langle\langle z,y_4\rangle,y_3\rangle\}\]
\end{example}
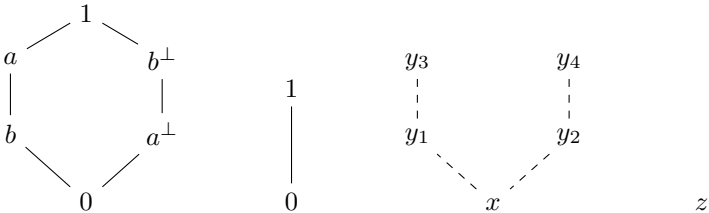
\begin{figure}[htbp]
  \begin{tikzpicture}
  \node (a) at (0,2.25) {$1$};
  \node (d) at (0,-.25) {$0$};
  \node (e) at (-1,.65) {$b$};
  \node (f) at (-1,1.65) {$a$};
    \node (g) at (1,.65) {$a^{\perp}$};
  \node (h) at (1,1.65) {$b^{\perp}$};
  \draw (e) -- (f) (g) -- (h) (e) -- (d) (g) -- (d) (f) -- (a) (h) -- (a) (a);
  \draw[preaction={draw=white, -,line width=6pt}];
\end{tikzpicture}
\hskip 3em
\begin{tikzpicture}
  \node (a) at (0,1.25) {$1$};
  \node (d) at (0,-0.25) {$0$};
  \draw (a) -- (d);
  \draw[preaction={draw=white, -,line width=6pt}];
\end{tikzpicture}
\hskip 3em
\begin{tikzpicture}
\node (e) at (1,1.65) {$y_4$};
\node (a) at (-1,1.65) {$y_3$};
  \node (b) at (1,.65) {$y_2$};
  \node (c) at (-1,.65) {$y_1$};
  \node (d) at (0,-.25) {$x$};
  \draw[dashed] (b) -- (d) (d) -- (c) (a) -- (c) (e) -- (b);
  \draw[preaction={draw=white, -,line width=6pt}];
\end{tikzpicture}    
\hskip 3.25em
\begin{tikzpicture}
  \node (d) at (0,.5) {$z$};
\end{tikzpicture}
\caption{The ortholattice $O_6$, the ortholattice algebra $O_2$, the UVO-space $X^+_{O_{6}}$, and the UVO-space $X^+_{O_{2}}$}\label{examples of uvo spaces}
\end{figure}
\begin{figure}[htbp]
  \begin{tikzpicture}
  \node (a) at (3.75,3.25) {$\langle 0,1\rangle$};
    \node (b) at (4.5,1.5) {$\langle0,b^{\perp}\rangle$};
  \node (c) at (1.5,1.5) {$\langle 1,b\rangle$};
  \node (d) at (2.5,-0.5) {$\langle 0,b\rangle$};
  \node (e) at (3,1.5) {$\langle0,a\rangle$};
  \node (g) at (3.75,-0.5) {$\langle 1,0\rangle$};
  \node (h) at (5, -0.5) {$\langle 0,a^{\perp}\rangle$}; 
  \node (i) at (6,1.5) {$\langle1,a^{\perp}\rangle$};
  \node (j) at (3.75,-2.5) {$\langle 0,0\rangle$};
  \node (k) at (1.5,3.25) {$\langle 1,a\rangle$};
  \node (l) at (6,3.25) {$\langle1,b^{\perp}\rangle$};
  \node (m) at (3.75,4.5) {$\langle1,1\rangle$};
  \draw (d) -- (c) (d) -- (e) (b) -- (h) (j) -- (g) (j) -- (h) (j) -- (d) (i) -- (g) (i) -- (h) (e) -- (a) (g) -- (c) (b) -- (a) (c) -- (k) (i) -- (l) (a) -- (m) (k) -- (m) (l) -- (m) (k) -- (e) (b) -- (l);
  \draw[preaction={draw=white, -,line width=6pt}];
  \end{tikzpicture}
\hskip .5em
  \begin{tikzpicture}
  \node (a) at (3.75,3.25) {$z$};
    \node (b) at (4.5,1.5) {$y_2$};
  \node (c) at (1.5,1.5) {$\langle z,y_3\rangle$};
  \node (d) at (1.5,-0.5) {$\langle z,y_1\rangle$};
  \node (e) at (3,1.5) {$y_1$};
  \node (g) at (3.75,-0.5) {$x$};
  \node (h) at (6, -0.5) {$\langle z,y_2\rangle$}; 
  \node (i) at (6,1.5) {$\langle z,y_4\rangle$};
  \node (j) at (3.75,-2.5) {$\langle z,x\rangle$};
  \node (k) at (2.75,3.25) {$y_3$};
  \node (l) at (5,3.25) {$y_4$};
  \draw[dashed] (j) -- (d) (j) -- (g) (j) -- (h) (g) -- (e) (g) -- (b) (d) -- (c) (d) -- (e) (h) -- (b) (h) -- (i) (a) -- (c) (a) -- (i) (k) -- (c) (k) -- (e) (l) -- (b) (l) -- (i);
  \draw[preaction={draw=white, -,line width=6pt}];
  \end{tikzpicture}\caption{The direct product ortholattice $O_2\times O_6$ and its dual UVO-sum $X^+_{O_{2}}+X^+_{O_6}$}\label{example of uvo sum}
\end{figure}
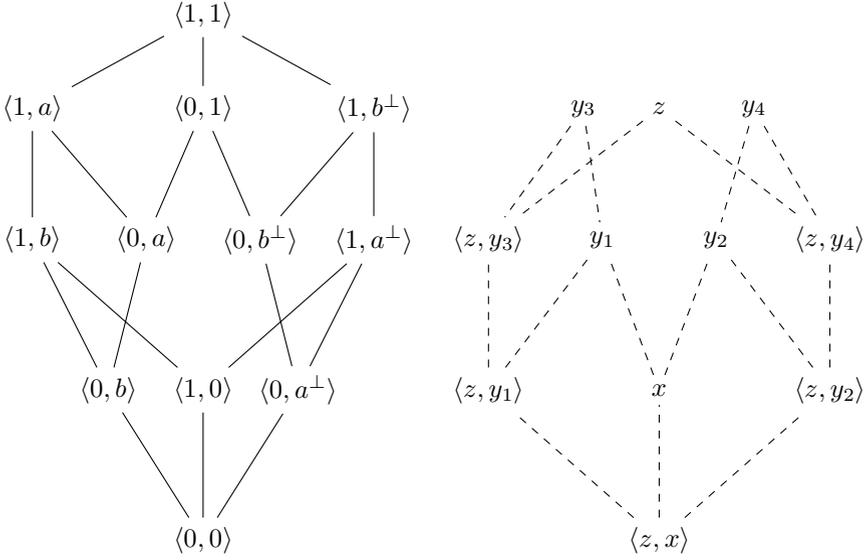

\begin{proposition}\label{homeo}
If $L$ and $L'$ are ortholattices and $X^+_L$ and $X^+_{L'}$ are their respective dual UVO-spaces, then there is a homeomorphism $f\colon X^+_{L\times L'} \to X^+_L+X^+_{L'}$ that is an isomorphism with respect to their orthospace reducts.  
\end{proposition}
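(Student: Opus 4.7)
The plan is to identify the proper filters of $L \times L'$ with pairs of filters on the two factors, making the three strata of the UVO-sum emerge naturally. First I would observe that every nonempty filter $F$ of $L \times L'$ decomposes uniquely as $F = F_1 \times F_2$ where $F_1 = \{a \in L : \langle a, 1\rangle \in F\}$ and $F_2 = \{b \in L' : \langle 1, b\rangle \in F\}$, using the identity $\langle a, b\rangle = \langle a, 1\rangle \wedge \langle 1, b\rangle$ together with upward closure. The filter $F$ is proper exactly when at least one of $F_1, F_2$ is proper, so $X^+_{L \times L'}$ partitions into three classes: (i) both $F_1, F_2$ proper, (ii) $F_1 \in X^+_L$ with $F_2 = L'$, and (iii) $F_1 = L$ with $F_2 \in X^+_{L'}$. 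I would then define $f$ to send class (i) filters to $\langle F_1, F_2\rangle \in X^+_L \times X^+_{L'}$, class (ii) filters to $F_1 \in X^+_L$, and class (iii) filters to $F_2 \in X^+_{L'}$. Bijectivity onto $X^+_L + X^+_{L'}$ is immediate from the decomposition.

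Next I would verify that $f$ is a homeomorphism by computing the image of the basic open $\widehat{\langle a, b\rangle}$. Since $\langle a, b\rangle \in F_1 \times F_2$ iff $a \in F_1$ and $b \in F_2$, a case split over the three classes yields
\[
f[\widehat{\langle a, b\rangle}] = \widehat{a} \cup \widehat{b} \cup (\widehat{a} \times \widehat{b}),
\]
where $\widehat{a}$ and $\widehat{b}$ denote the subbasic sets in $X^+_L$ and $X^+_{L'}$ respectively. By Theorem \ref{representation theorem}, the right-hand side is a generic basic open of $X^+_L + X^+_{L'}$ in the sense of Definition \ref{uvo sum}, and conversely every such basic open arises this way. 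So $f$ restricts to a bijection between bases, which suffices for $f$ to be a homeomorphism.

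For the orthospace isomorphism, I would unravel $F \perp_{L \times L'} F'$ as asserting the existence of some $\langle a, b\rangle$ with $a^\perp \in F_1$, $a \in F_1'$, $b^\perp \in F_2$, and $b \in F_2'$, and then match this against each clause of $\perp_{X+Y}$ via a nine-case analysis on the classes of $F$ and $F'$. The key observation is that whenever $F_i$ or $F_i'$ is improper, the corresponding pair of constraints trivialises by choosing $a = 0$ (or $b = 0$), using that $1 \in F_j$ always. Thus when $F$ is in class (ii) and $F'$ is in class (iii), both pairs trivialise simultaneously and $F \perp_{L \times L'} F'$ holds unconditionally, matching the $X \times Y$ summand of $\perp_{X+Y}$. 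When both filters are in class (i), no trivialisation is possible and orthogonality requires genuine witnesses on each factor, matching the final summand of $\perp_{X+Y}$. The remaining mixed cases reduce to $\perp_L$ or $\perp_{L'}$ on a single factor, matching the $\perp_X$, $\perp_Y$, and mixed clauses of $\perp_{X+Y}$.

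The main obstacle will be the bookkeeping of this orthogonality case analysis: each of the nine combinations of classes for the pair $\langle F, F'\rangle$ must be matched against the correct clause of $\perp_{X+Y}$, and one must keep track of which trivial witnesses $a \in \{0, 1\}$ and $b \in \{0, 1\}$ are permissible in each subcase. The reasoning itself is elementary, but the details are delicate precisely because the improper filters $L$ and $L'$ supply the \say{free} witnesses needed to collapse the orthogonality constraint on one side.
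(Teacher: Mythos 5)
Your proposal is correct and follows essentially the same route as the paper: the same product decomposition $F = F_1\times F_2$ (your $F_1$ defined via $\langle a,1\rangle\in F$ coincides with the paper's existential projection $x_L$), the same three-case definition of $f$, the same identification of basic opens $\widehat{\langle a,b\rangle}\mapsto \widehat a\cup\widehat b\cup(\widehat a\times\widehat b)$, and the same case analysis with trivializing witnesses for the orthogonality isomorphism. The only cosmetic difference is that you argue the homeomorphism by showing $f$ carries a basis bijectively onto a basis, whereas the paper computes $f^{-1}$ and $f$ on basic opens separately.
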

\begin{proof}
  For each point $x\in X^+_{L\times L'}$ i.e., every proper filter $x\in \mathfrak{F}(L\times L')$, let
  \[x_L=\{a\in L\mid \exists b\in L':\langle a,b\rangle\in x\},\hspace{.2cm} x_{L'}=\{b\in L'\mid \exists a\in L:\langle a,b\rangle\in x \},\] Clearly, we have that $x_L\in\mathfrak{F}(L)$ and $x_{L'}\in\mathfrak{F}(L')$. Now define $f$ by  
\[
    f(x)=
\begin{cases}
    x_{L}& \text{if  $x_{L'}$ is improper,}\\
    x_{L'} &\text{if $x_{L}$ is improper,} 
    \\
    \langle x_L, x_{L'}\rangle              & \text{otherwise.}     
\end{cases}
\] 

The injectivity of $f$ follows easily from the fact
that $x = x_L \times x_{L'}$ for every filter $x \in X^+_{L \times L'}$.
To see that $f$ is a surjective function, let $y\in X^+_{L}+X^+_{L'}$. 
In the case when $y\in X^+_L$, we have that for the proper filter $x := y \times L' \in X^+_{L \times L'}$, it follows that $y=x_{L'}$ and that $x_L$ is improper. Therefore, we find that $f(x)=y$. The proof for the case when $y\in X^+_{L'}$ runs analogously. Lastly, for $y^L\in X^+_L$ and $y^{L'}\in X^+_{L'}$, in the case when $y=\langle y^{L},y^{L'}\rangle$, since $(y^L\times y^{L'})_L=y^L$ and $(y^L\times y^{L'})_{L'}=y^{L'}$, it is easy to see that $y^L\times y^{L'}\in X^+_{L\times L'}$ where $f(y^L\times y^{L'})=y$. Hence, $f$ is a bijection.    

We now verify that $f$ is a continuous function. First observe that by Definition \ref{uvo sum}, each basic open set within $X^+_L+X^+_{L'}$ is of the following shape $U\cup V\cup(U\times V)$ for $U\in\mathcal{COR}(X^+_L)$ and $V\in\mathcal{COR}(X^+_{L'})$. By Theorem \ref{representation theorem}, each $U\in\mathcal{COR}(X^+_L)$ is of the form $\widehat{a}$ for some $a\in L$, and so \[U\cup V\cup (U\times V)=\widehat{a}\cup\widehat{b}\cup(\widehat{a}\times\widehat{b})\] for $a\in L$ and $b\in L'$. We now verify that the inverse image of each basic open set is a union of basic open sets in $X^+_{L\times L'}$ by the following calculation: \[f^{-1}[\widehat{a}\cup\widehat{b}\cup(\widehat{a}\times\widehat{b})]=f^{-1}[\widehat{a}]\cup f^{-1}[\widehat{b}]\cup f^{-1}[\widehat{a}\times\widehat{b}]=\widehat{\langle a,0\rangle}\cup\widehat{\langle0,b\rangle}\cup\widehat{\langle a,b\rangle}\] Hence, $f^{-1}[\widehat{a}\cup\widehat{b}\cup(\widehat{a}\times\widehat{b})]$ can be written as the union of basic open sets in the space $X^+_{L\times L'}$, so $f$ is a continuous function. To see that its inverse $f^{-1}$ is a continuous function, note that for each basic open set $\langle a,b\rangle\in X^+_{L\times L}$,    
\begin{align*}
    \widehat{\langle a,b\rangle}&=\{x\in\mathfrak{F}(L\times L')\mid\langle a,b\rangle\in x, x_{L'}\text{ is improper}\}\\&\cup\{x\in\mathfrak{F}(L\times L')\mid\langle a,b\rangle\in x, x_{L}\text{ is improper}\}\\&\cup\{x\in\mathfrak{F}(L\times L')\mid\langle a,b\rangle\in x:x_{L}\in\mathfrak{F}(L),x_{L'}\in\mathfrak{F}(L')\}
\end{align*}
which implies that $f[\widehat{\langle a,b\rangle}]=\widehat{a}\cup\widehat{b}\cup(\widehat{a}\times \widehat{b})$ so that $f[\widehat{\langle a,b\rangle}]$ is basic open in the space $X^+_L+X^+_{L'}$, as required.

Finally, we show that $f$ is an isomorphism with respect
to the orthospace reducts.
Let $\perp_{\text s}$ and $\perp$ be the orthogonality relations
of the codomain and the domain of $f$,  respectively.
The preceding argument shows that the inverse map $f^{-1}$ of $f$
is given by $f^{-1}(x) = x \times L'$, $ f^{-1}(y) = L \times y$,
and $f^{-1}(x, y) = x \times y$, where $x \in X^+_L$ and $y \in X^+_{L'}$.
Let $u, v \in X^+_L + X^+_{L'}$.
An argument showing that $u \perp_{\text s} v$ if and only if
$f^{-1}(u) \perp f^{-1}(v)$
involves a case analysis based on whether
$u$ and $v$ belong to $X^+_L$, $X^+_{L'}$, or $X^+_L \times X^+_{L'}$.
We present an argument for the case $u \in X^+_L $
and $v = \langle w, w'\rangle  \in X^+_L \times X^+_{L'}$
as the other cases can be handled in similar ways.
By the definition of $\perp_{\text s}$,
we have that $u \perp_{\text s} v$ if and only if 
there exists $a \in w$ such that $a^\bot \in u$.
On the other hand, 
\begin{align*}
  \inv f u \perp \inv f v &\iff u \times L' \perp w \times w'\\
  &\iff \exists \langle a, a'\rangle \in w \times w': \langle a^\bot, a'^\bot\rangle \in u \times L'\\
  &\iff \exists a \in w: a^\bot \in L,
\end{align*}
proving the claim for this particular case.
\end{proof}
\begin{corollary}
If $X$ and $Y$ are UVO-spaces, then their UVO-sum $X+Y$ is a UVO-space. Moreover, the mapping $f\colon\mathcal{COR}(X+Y)\to\mathcal{COR}(X)\times\mathcal{COR}(Y)$ is an ortholattice isomorphism. 
\end{corollary}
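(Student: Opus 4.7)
The approach is to deduce both claims from the previously established duality, together with Proposition~\ref{homeo}, by transporting structure. Set $L := \mathcal{COR}(X)$ and $L' := \mathcal{COR}(Y)$, which are ortholattices by Lemma~\ref{space to lattice}. By Theorem~\ref{characterization theorem}, the maps $g_X \colon x \mapsto \mathcal{COR}_X(x)$ and $g_Y \colon y \mapsto \mathcal{COR}_Y(y)$ are homeomorphisms and orthospace-reduct isomorphisms $X \to X^+_L$ and $Y \to X^+_{L'}$, respectively.

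The first step is to verify that the UVO-sum construction is preserved by such isomorphisms: the map $g_X + g_Y \colon X + Y \to X^+_L + X^+_{L'}$ defined piecewise as $g_X$ on $X$, as $g_Y$ on $Y$, and as $(g_X, g_Y)$ on $X \times Y$ is a bijection that is both a homeomorphism and an orthospace isomorphism. This is an unpacking of Definition~\ref{uvo sum}, since the basic opens $U \cup V \cup (U \times V)$ and the orthogonality relation $\perp_{X+Y}$ are each assembled componentwise from those of the two summands, so componentwise transport by $g_X$ and $g_Y$ preserves them.

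Next, Proposition~\ref{homeo} supplies a homeomorphism and orthospace isomorphism $h \colon X^+_{L \times L'} \to X^+_L + X^+_{L'}$. Composing inverses, the bijection $\varphi := (g_X + g_Y)^{-1} \circ h \colon X^+_{L \times L'} \to X + Y$ is therefore a homeomorphism and an orthospace isomorphism. Since $X^+_{L \times L'}$ is a UVO-space by Lemma~\ref{lattice to uvo-space}, and the five axioms of Definition~\ref{uvo space} refer only to the topology, the specialization order (which is itself determined by the topology on a $T_0$ space), and the orthogonality relation, every one of them transfers along $\varphi$. This establishes that $X + Y$ is a UVO-space.

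For the moreover clause, the UVO-isomorphism $\varphi$ induces an ortholattice isomorphism $\varphi^+ \colon \mathcal{COR}(X + Y) \to \mathcal{COR}(X^+_{L \times L'})$ by taking inverse images (as in the functor $(\bullet)^+$ of Theorem~\ref{main theorem}). By Theorem~\ref{representation theorem}, the hat map $\widehat{\bullet} \colon L \times L' \to \mathcal{COR}(X^+_{L \times L'})$ is an ortholattice isomorphism, so composing $\varphi^+$ with its inverse yields the desired ortholattice isomorphism $\mathcal{COR}(X + Y) \to L \times L' = \mathcal{COR}(X) \times \mathcal{COR}(Y)$. The main obstacle I anticipate is the first step, namely the explicit verification that the UVO-sum is a construction invariant under UVO-isomorphism of its summands, since the underlying set $X + Y$ has a three-piece structure; but since Definition~\ref{uvo sum} describes that structure purely componentwise, the verification proceeds by a direct case split on whether a point lies in $X$, in $Y$, or in $X \times Y$.
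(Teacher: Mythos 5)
Your proposal is correct and follows essentially the same route as the paper: both arguments chain the homeomorphisms from Theorem~\ref{characterization theorem} with Proposition~\ref{homeo} to identify $X+Y$ with $X^+_{\mathcal{COR}(X)\times\mathcal{COR}(Y)}$, and then obtain the ortholattice isomorphism from Theorem~\ref{representation theorem} and the duality. You simply make explicit two steps the paper leaves implicit, namely that the UVO-sum construction is transported componentwise along the summand isomorphisms and that the UVO-space axioms are invariant under homeomorphisms that are also orthospace isomorphisms.
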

\begin{proof}
Clearly, by Theorem \ref{characterization theorem}, the maps $X\to X^+_{\mathcal{COR}(X)}$ and $Y\to X^+_{\mathcal{COR}(Y)}$ are homeomorphisms (and isomorphisms with respect to $\perp$) and thus there is a homeomorphism $X+Y\to X^+_{\mathcal{COR}(X)}+X^+_{\mathcal{COR}(Y)}$. Then by Proposition \ref{homeo}, we find that $X^+_{\mathcal{COR}(X)}+X^+_{\mathcal{COR}(Y)}\to X^+_{\mathcal{COR}(X)\times\mathcal{COR}(Y)}$ is a homeomorphism. The above homeomorphisms are sufficient in establishing the fact that the UVO-sum $X+Y$ is a UVO-space if $X$ and $Y$ are UVO-spaces. For the second part, simply apply Theorem \ref{main theorem} and Proposition \ref{homeo}. 
\end{proof}
It is easy to check that every UVO-sum $X+Y$ comes equipped with canonical coprojections $\kappa_1\colon X\to X+Y$ and $\kappa_2\colon Y\to X+Y$ satisfying the universal mapping property for categorical coproducts that for any UVO-space $Z$ and pair of UVO-maps $f\colon X\to Z$ and $g:Y\to Z$, there exists a unique UVO-map $\langle f,g\rangle\colon X+Y\to Z$ making the following diagram commute:
  \[
    \begin{tikzcd}[row sep=huge]
        & Z  & \\
        X\ar[ur,"f",sloped] \ar[r,"\kappa_1", swap] & X+Y \ar[u,dashed,"{\langle f,g\rangle}" description] & Y \ar[ul,"g",sloped] \ar[l,"\kappa_2"]
    \end{tikzcd}
    \]
Hence, given any two UVO-spaces $X$ and $Y$, their UVO-sum $X+Y$ is a coproduct in the category $\mathbf{UVO}$. As a consequence, it follows that if $X_1,\dots,X_n$ are UVO-spaces, then $X_1 + \dots + X_n$ is a UVO-space. 

\subsection{Lattice completions}
\begin{notation}
Let $L$ be a lattice and let $A\subseteq L$. Then:
\begin{enumerate}
    \item $A^u$ is the collection of upper bounds of $A$, i.e., \[A^u=\{a\in L\mid\forall b\in A: b\leq a\}.\]
    \item $A^l$ is the collection of lower bounds of $A$, i.e., \[A^l=\{a\in L\mid\forall b\in A:a\leq b\}.\]
\end{enumerate}
\end{notation}
\begin{definition}
Given a lattice $L$, a subset $A\subseteq L$ is \textit{normal} iff $A=A^{\textit{ul}}$. We denote the collection of all normal subsets of $L$ by $\text{Norm}(L)$. 
\end{definition}

We call a point $u$ of a UVO-space $X$ \emph{principal}
if there exists an open neighborhood $U$ of $u$
such that $v \not \in U$ for every $v \leqslant u$ distinct from $u$.
\begin{proposition}
  Let $L$ be an  ortholattice and $X$ its dual UVO-space.
  A point in $X$ is principal in the sense above
  if and only if it is a principal filter.
\end{proposition}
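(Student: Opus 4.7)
The plan is to argue the equivalence in two directions, using only that the basic opens of $X = X_L^+$ are the sets $\widehat a = \{x \in \mathfrak{F}(L) \mid a \in x\}$ and that, by Proposition \ref{lattice to space}, the specialization order $\leqslant$ on $X$ is exactly set-theoretic inclusion of filters.

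For the forward direction, suppose $u \in X$ is a principal filter, say $u = \mathop\uparrow a$ for some $a \in L \setminus \{0\}$. I would propose the basic open $U := \widehat a$ as the witnessing neighborhood. Since $a \in u$, clearly $u \in U$. The key observation is that any $v \in \widehat a$ satisfies $a \in v$, so $\mathop\uparrow a \subseteq v$ by filter closure; if additionally $v \leqslant u$, i.e.\ $v \subseteq u = \mathop\uparrow a$, then $v = \mathop\uparrow a = u$. Hence no $v \leqslant u$ distinct from $u$ lies in $U$, as required.

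For the converse, assume $u$ is topologically principal, witnessed by an open neighborhood $U$. Since sets of the form $\widehat a$ form a basis, pick $a \in L$ with $u \in \widehat a \subseteq U$; in particular $a \in u$. I claim $u = \mathop\uparrow a$. The inclusion $\mathop\uparrow a \subseteq u$ is immediate from $a \in u$ and filter closure of $u$. For the other inclusion, consider the filter $v := \mathop\uparrow a$, which is proper because $a \neq 0$ (otherwise $0 \in u$, contradicting properness of $u$), so $v \in X$. Then $v \in \widehat a \subseteq U$, and $v = \mathop\uparrow a \subseteq u$ by the same filter closure argument, i.e.\ $v \leqslant u$. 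The witnessing hypothesis on $U$ forces $v = u$, whence $u = \mathop\uparrow a$ is principal.

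I do not expect any substantive obstacle; the argument is essentially a bookkeeping of definitions together with the standard fact that principal filters are detected by basic opens. The only minor point worth stating carefully is the properness of $\mathop\uparrow a$ in the converse, which rests on $u$ being a proper filter. No additional lemma beyond the description of the specialization order in Proposition \ref{lattice to space} and the form of basic opens of $X_L^+$ is needed.
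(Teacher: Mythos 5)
Your proof is correct and follows essentially the same route as the paper: both directions hinge on the basic open $\widehat a$ and the fact that the specialization order is filter inclusion, with your converse being the direct (rather than by-contradiction) phrasing of the paper's argument. The only difference is that you spell out the forward direction, which the paper dismisses as clear.
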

\begin{proof}
  It is clear that if $u \in X$ is a principal filter,
  then it is principal in the sense above.
  Suppose that $u \in X$ is principal in our sense.
  Take a neighborhood $U$ of $u$ as in the definition of principality
  and then a basic open set $\widehat a$ such that $u \in \widehat a \subseteq U$.
  Let $v$ be the principal filter generated by $a$.
  Assume by way of contradiction that $u$ is not a principal filter.
  Then, $v \leqslant u$ and $v \neq u$.
  By principality, we have $v \not \in U$
  and \emph{a fortiori} $v \not \in \widehat a$, which is a contradiction.
\end{proof}
For a UVO-space $X$,
let $\mathfrak{P}(X)$ be the orthoframe of principal points of $X$
with the induced orthogonality relation.
We then have the following UVO-space translation of the MacNeille completion of an ortholattice. 
 \begin{theorem}
   Let $L$ be an ortholattice and let $X$ be its dual UVO-space. Then, the lattice $\mathcal{R}(\mathfrak{P}(X))$ is (up to isomorphism) the MacNeille completion of $L$.
   
 \end{theorem}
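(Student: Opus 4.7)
The plan is to exhibit an ortholattice isomorphism between $\mathcal{R}(\mathfrak{P}(X))$ and $\mathrm{Norm}(L)$, since MacLaren has shown that $\mathrm{Norm}(L)$ is (up to isomorphism) the MacNeille completion of $L$. As a first step, I would use the preceding proposition to identify $\mathfrak{P}(X)$ with $L \setminus \{0\}$ via the bijection $a \leftrightarrow \mathop\uparrow a$, and verify, by the same sort of filter-level calculation used for $\widehat{a^{\perp}} = (\widehat{a})^{*}$ in the proof of Theorem~\ref{representation theorem}, that the orthogonality on $\mathfrak{P}(X)$ transports to the ortholattice orthogonality $a \perp b \iff a \leq b^{\perp}$ on $L \setminus \{0\}$.

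The core algebraic step would be the identity $A^{**} = A^{ul}$ for every $A \subseteq L$, where $A^{*} = \{b \in L : \forall a \in A,\ a \perp b\}$ denotes the polar with respect to the above orthogonality. This follows from the equivalence $b \in A^{*} \iff b^{\perp} \in A^{u}$, which exhibits $A^{*}$ as the pointwise orthocomplement image of $A^{u}$; one further application together with the involutivity of $(\bullet)^{\perp}$ yields $A^{**} = A^{ul}$. In particular, the normal subsets of $L$ are precisely the orthoclosed subsets of $L$. I would then define $\Phi : \mathrm{Norm}(L) \to \mathcal{R}(\mathfrak{P}(X))$ by $\Phi(A) = A \setminus \{0\}$, with candidate inverse $B \mapsto B \cup \{0\}$. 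Because $0$ is a lower bound of every subset of $L$ and is orthogonal to every element of $L$, its presence or absence never affects the polar computation, and a short set-theoretic check shows that $A$ is normal in $L$ if and only if $A \setminus \{0\}$ is orthoregular in $L \setminus \{0\}$. Preservation of inclusion is immediate, and preservation of the orthocomplement follows from the equality $A^{*} \setminus \{0\} = (A \setminus \{0\})^{*}$; since any order-isomorphism between ortholattices that commutes with orthocomplements automatically preserves meets and joins, $\Phi$ is an ortholattice isomorphism.

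The main obstacle is the delicate bookkeeping around the element $0$: the MacLaren construction via normal subsets always contains $0$, whereas $\mathfrak{P}(X)$ omits the improper principal filter $\mathop\uparrow 0 = L$. Once this is handled by the shift $A \mapsto A \setminus \{0\}$, and once the Galois-style identity $A^{**} = A^{ul}$ is in place, the correspondence becomes almost tautological and the isomorphism claim reduces to routine verification.
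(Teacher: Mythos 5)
Your proposal is correct and follows essentially the same route as the paper's proof: both reduce the claim to MacLaren's characterization of the MacNeille completion, identify the orthoframe of principal points $\mathfrak{P}(X)$ with $(L\setminus\{0\},\ a\leq b^{\perp})$ via $a\mapsto\,\uparrow\!a$, and dispose of the element $0$ by the observation that it is orthogonal to everything and lies in every normal (equivalently, orthoregular) subset. The only difference is that you start from MacLaren's $\mathrm{Norm}(L)$ formulation and supply the Galois identity $A^{**}=A^{ul}$ yourself, whereas the paper cites MacLaren directly for the statement that the completion is $\mathcal{R}(L,\mathord{\perp})$ with $a\perp b\iff a\leq b^{\perp}$; your extra step is a correct and harmless elaboration of the same argument.
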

 \begin{proof}
   \newcommand{\ppl}{\mathop{\uparrow}}
   MacLaren \cite[Theorems 2.3--2.5]{maclaren} showed that the MacNeille completion of $L$
   is isomorphic to $\mathcal{R}(L, \mathord{\simperp})$,
   where $\simperp$ is a binary relation on (the domain of) $L$
   defined by $a \simperp b \iff a \le b^\bot$.
   We see that $\mathcal R(L, \mathord\simperp)$ is isomorphic to
   $\mathcal R(L^-, \mathord\simperp)$, where
   $(L^-, \mathord\simperp)$ is the relational substructure of $L$
   with the domain $L^- = L \setminus \{0\}$.
   To see this, first observe that $0 \simperp a$ for all $a \in L$
   and that $0 \in U$ for all $U\in\mathcal R(L, \mathord\simperp)$.
   From this, it follows that $U \mapsto U \setminus \{0\}$
   for $U \in \mathcal R(L, \mathord\simperp)$
   is the desired isomorphism.
   It now suffices to show that $(L^-, \mathord\simperp)$ is isomorphic to
   $\mathfrak P(X) = (\mathfrak P(X), \bot)$.
   To see this,
   first note that for an arbitrary $c \in L$ and $u \in X$,
   we have $u \perp \ppl c$,
   where $\ppl c$ is the principal filter generated by $c$,
   if and only if $c^\bot \in u$.
   Hence, $(\ppl a) \perp (\ppl b)$ if and only if $b^\bot \in \ppl a$,
   i.e., $b^\bot \ge a$.
\end{proof}

We proceed with a characterization of canonical extensions as defined by Harding~\cite[p.~92]{tatra} (see also \cite[Theorem 8.7]{bruns}).
\begin{theorem}
Let $L$ be an ortholattice and let $X$ be its dual UVO-space. Then $L' := \mathcal{R}(X)$ is (up to isomorphism) the canonical extension of $L$. 
\end{theorem}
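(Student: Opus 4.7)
The plan is to verify that $(\mathcal{R}(X), \cap, {}^*, \emptyset)$, equipped with the embedding $\widehat{\bullet}\colon L \to \mathcal{R}(X)$ from Theorem~\ref{representation theorem}, satisfies the density and compactness properties characterizing the canonical extension of $L$ in the sense of Bruns and Harding. The completeness of $\mathcal{R}(X)$ follows by observing that arbitrary intersections of orthoregular sets remain orthoregular, since $(\bigcap_i U_i)^{**} \subseteq U_j^{**} = U_j$ for each $j$, so $\bigcap_i U_i$ is orthoregular, and arbitrary joins are then given by $\bigvee_i U_i = (\bigcup_i U_i)^{**}$ with ${}^*$ supplying the orthocomplement. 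The image of $\widehat{\bullet}$ lies in $\mathcal{COR}(X) \subseteq \mathcal{R}(X)$ and is a sub-ortholattice by Theorem~\ref{representation theorem}.

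For density, I would first establish that every orthoregular $U \subseteq X$ is an upset in the specialization order $\subseteq$, using the easy monotonicity of $\perp_L$: if $x \perp_L y$ with witness $a \in y$ and $y \subseteq z$, then $a \in z$ witnesses $x \perp_L z$, so $U^*$, and hence $U^{**}$, is upward closed. Writing $C_y := \bigcap_{a \in y} \widehat{a}$ for each $y \in X$, the upset property yields $C_y \subseteq U$ whenever $y \in U$, whence $U = \bigcup_{y \in U} C_y$ as sets. Since $U$ is already orthoregular, this equals $\bigvee_{y \in U} C_y$ in $\mathcal{R}(X)$, exhibiting $U$ as a join of meets of elements from $L$. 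The dual expression of $U$ as a meet of joins follows by applying ${}^*$.

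For compactness, I would argue contrapositively: given a filter $F$ and an ideal $I$ of $L$ with $F \cap I = \emptyset$, I would consider the filter $F'$ of $L$ generated by $F \cup \{b^{\perp} : b \in I\}$. A finite meet of generators has the form $a \wedge b^{\perp}$ with $a \in F$ and $b \in I$, and vanishes only if $a \leq b$, which would force $a \in F \cap I$ contrary to hypothesis. Hence $F'$ is proper, so $F' \in X$. By construction $F' \in \bigcap_{a \in F} \widehat{a}$, and using the identity $(\widehat{b})^* = \widehat{b^{\perp}}$ from the proof of Theorem~\ref{representation theorem} together with irreflexivity of $\perp_L$, one checks that $F' \in (\bigcup_{b \in I} \widehat{b})^{*} \setminus (\bigcup_{b \in I} \widehat{b})^{**}$, so $\bigwedge F \not\leq \bigvee I$ in $\mathcal{R}(X)$.

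The main obstacle is this last step: because joins in $\mathcal{R}(X)$ are computed via the double-orthocomplement closure rather than set-theoretic union, a naive topological compactness argument on the basic opens $\widehat{a}$ does not suffice to witness the strict failure of the join-side inequality, and the explicit filter-theoretic construction of $F'$ appears essential. Once density and compactness are verified, the uniqueness clause of the Bruns-Harding characterization yields the claimed isomorphism between $\mathcal{R}(X)$ and the canonical extension of $L$.
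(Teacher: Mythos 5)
Your completeness and density arguments are sound: arbitrary intersections of orthoregular sets are orthoregular, every orthoregular set is an upset of the specialization order, and your sets $C_y=\bigcap_{a\in y}\widehat a=\mathop\uparrow y$ coincide with the sets $\{y\}^{**}$ used in the paper's proof, so the decomposition $U=\bigvee_{y\in U}C_y$ (and its ${}^*$-dual) gives density essentially as the paper does, if anything a bit more directly. The gap is in compactness. You claim that a finite meet of generators $c\wedge d^{\perp}$ with $c\in F$, $d\in I$ vanishes only if $c\le d$; this inference is valid in Boolean algebras but fails in a general ortholattice. In the benzene ring $O_6$ (elements $0<a<b<1$ and $0<b^{\perp}<a^{\perp}<1$, with $\{a,b\}$ incomparable to $\{a^{\perp},b^{\perp}\}$) one has $b\wedge a^{\perp}=0$ although $b\not\le a$. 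Taking $F=\mathop\uparrow b$ and $I=\mathop\downarrow a$ gives $F\cap I=\emptyset$, yet your $F'$ contains $b\wedge a^{\perp}=0$ and is improper, so it is not a point of $X$.

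The failure is not only in the properness check: your certificate for $\bigwedge F\not\le\bigvee I$ is a point of $\bigcap_{a\in F}\widehat a$ lying in $(\bigcup_{b\in I}\widehat b)^{*}$, and in the $O_6$ example $\widehat b\cap\widehat a^{*}=\emptyset$ even though $\widehat b\not\subseteq\widehat a=(\bigcup_{d\in I}\widehat d)^{**}$; the unique witness $\mathop\uparrow b$ of the non-inclusion lies in neither $\widehat a^{*}$ nor $\widehat a^{**}$. So the method cannot be repaired merely by generating $F'$ more carefully. The paper instead argues in the forward direction: assuming $\bigcap_i\widehat{a_i}\subseteq(\bigcup_j\widehat{b_j})^{**}$, it takes $u$ to be the filter generated by the $a_i$ alone and $v$ the filter generated by the $b_j^{\perp}$ (if either is improper, a finite subfamily already yields the required inequality), observes $v\in(\bigcup_j\widehat{b_j})^{*}$, concludes $u\perp v$ from the assumed inclusion, and extracts an interpolant $c\in u$ with $c^{\perp}\in v$, whence $\bigwedge A'\le c\le\bigvee B'$ for finite subfamilies. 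You should replace your contrapositive construction with an interpolation argument of this kind.
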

\begin{proof}
\renewcommand{\bot}{*}
For $u \in X^+_L$, the set $\{u\}^{\bot\bot} \in L'$ is a meet of elements of $L$:
$u = \bigwedge \{ \widehat a \mid \widehat a \supseteq \{u\}^{\bot\bot} \}$.
The inclusion $\subseteq$ is clear.
To show $\supseteq$,
take $v \not \in \{u\}^{\bot\bot}$.
If $u \le v$, then $\{u\}^\bot \subseteq \{v\}^\bot$,
so $v \in \{u\}^{\bot\bot}$;
hence, $u \not \le v$.
Take $a \in u \setminus v$;
then $u$ is in $\widehat a$, but $v$ is not
($\widehat\bullet$ denotes the embedding $L \to L'$).
Note that $u =
\bigcap \{ \widehat a \mid \widehat a \supseteq \{u\}^{\bot\bot} \}$.
We have seen that $\{u\}^{\bot\bot} \in L'$ is a meet of elements of $L$.
We now show that every element of $L'$ is a meet of joins of elements of $L$. 
In particular, we claim that for $Y \in L'$ we have
$Y = \bigvee \{\{u\}^\bot \mid Y \supseteq \{u\}^\bot\}$.
The inclusion $\subseteq$ is clear.
To show the inclusion in the other direction,
we show the contrapositive:
if $\{u\}^\bot \supseteq Y \implies \{u\}^\bot \ni v$ for every $u$,
then $v \in Y$.
Assume the hypothesis;
we show $v \perp Y^\bot$.
Take an arbitrary $u \in Y^\bot$.
Then $\{u\}^\bot \supseteq Y$, so we have $\{u\}^\bot \ni v$,
i.e., $u \perp v$.

Lastly, we verify that the embedding $L \to L'$ is compact.
Assume that
\begin{equation}
\bigwedge_i \widehat{a_i} \subseteq \bigvee_j \widehat{b_j}\label{eq:infinitary}
\end{equation}
for families $A := \{a_i \mid i \in I\}, B := \{b_j \mid j \in J\}$
of elements of $L$.
We show that there exists finite subfamilies $A', B'$ of $A, B$, respectively,
such that $\bigwedge A \le \bigvee B$.
Note that the right-hand side of formula~(\ref{eq:infinitary}) is equal to
$(\bigcup_j\widehat{b_j})^{\bot\bot}$.
Now let $u$ be the filter generated by $A$.
Assume that $u$ is improper.
Then there exists a finite $A' \subseteq A$
such that $0 = \bigwedge A'$.
Now we let $B' = \emptyset$, and we are done.
Suppose, therefore, that $u$ is proper.
By construction, $u$ is in the left-hand side of formula~(\ref{eq:infinitary})
and thus in the right-hand side.
Observe
\begin{equation}\label{eq:botbot}
  u \in \left(\bigcup_j\widehat{b_j}\right)^{\bot\bot}
  \iff
  \zenbu{v}[\zenbu{w}[\aru{j}b_j \in w \implies w \perp v] \implies u \perp v],
\end{equation}
where the variables $v$ and $w$ range over $X$.
Let $v$ be the filter generated by $\{b_j^\bot \mid j \in J\}$.
Assume that $v$ is improper.
Then for some finite $B' \subseteq B$,
we have $0 = \bigwedge \{b^\bot \mid b \in B'\}$,
i.e., $1 = \bigvee B'$.
Therefore, by a similar reasoning as before,
we may assume that $v$ is proper.
By formula~(\ref{eq:botbot}), $u \perp v$.
\renewcommand{\bot}{{\mathord\perp}}
By definition, there exists  $c \in u$ such that $c^\bot \in v$.
By the construction of $u$ and $v$,
we have $\bigwedge A' \le c$ and $c^\bot \ge \bigwedge \{b^\bot \mid b \in B'\}$.
The latter implies $c \le \bigvee B'$,
so we have $\bigwedge A' \le \bigvee B'$ as desired.
\end{proof}
\subsection{Homomorphic images of orthomodular lattices}
We conclude this section by characterizing the notion of homomorphic images as applied to an orthomodular lattice, in UVO-spaces. We leave the characterization of homomorphic images as applied to ortholattices (the more general case) as an open problem.

Recall that a subset $S'$ of a relational structure $(S, R)$
where $R$ is binary
is an \emph{inner substructure}, or a \emph{generated subframe} $(S, R)$,
if $y \in S'$ whenever $x \in S'$ and $xRy$.
For the remainder of this subsection,
upsets \emph{simpliciter} mean
sets upward closed with respect to the specialization order $\leqslant$.
\begin{proposition}
  Let $L$ be an orthomodular lattice and $X$ be its dual UVO-space.
  Let $C(L)$ be the set of congruences on $L$
  and $\mathrm{PUGS}(L)$  the set of
  principal upsets of $X$ that are generated subframes of $(X, \not \perp)$.
  Then there is a one-to-one correspondence between
  $C(L)$ and $\mathrm{PUGS}(L)$.
\end{proposition}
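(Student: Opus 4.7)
The plan is to pass through the classical representation of congruences on an orthomodular lattice by $p$-filters (see, e.g., Kalmbach~\cite{kalmbach}): for an OML $L$, the assignment $\theta \mapsto F_\theta := [1]_\theta = \{a \in L : a \mathrel\theta 1\}$ is a bijection between $C(L)$ and the set of $p$-filters of $L$, with inverse $F \mapsto \theta_F := \{(a,b) : (a \wedge b) \vee (a^\perp \wedge b^\perp) \in F\}$. Every $p$-filter $F$ is a proper filter, hence a point of $X = X^+_L$, so the principal upset $\mathop\uparrow F$ is well-defined. The proposed bijection $C(L) \to \mathrm{PUGS}(L)$ is then $\theta \mapsto \mathop\uparrow F_\theta$, and it suffices to show that the image of the assignment $F \mapsto \mathop\uparrow F$, restricted to $p$-filters, is exactly $\mathrm{PUGS}(L)$.

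To do this, I would first reformulate the generated subframe condition: $\mathop\uparrow u$ is a generated subframe of $(X, \not\perp)$ if and only if for every $y \in X$ with $u \not\subseteq y$, there exists $a \in u$ with $a^\perp \in y$. This follows because $x \not\perp y$ is equivalent to $x \cup y$ being contained in some proper filter, and $u \subseteq x$ together with $x \cup y$ contained in a proper filter entails that $u \cup y$ is contained in a proper filter. With this reformulation in hand, I would prove: (i) if $F$ is a $p$-filter, then $\mathop\uparrow F$ satisfies the condition; and (ii) conversely, if $\mathop\uparrow u$ satisfies the condition, then $u$ is a $p$-filter. For (i), the key is that in an OML, every proper filter $x \supseteq F$ is automatically $\theta_F$-saturated, courtesy of the OML identity $a \wedge ((a \wedge b) \vee (a^\perp \wedge b^\perp)) = a \wedge b$; from this one shows that the image of such an $x$ in $L/\theta_F$ is a proper filter that cannot be compatible with the image of $y$ unless $F \subseteq y$. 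For (ii), given an arbitrary $a \in L$ and $b \in u$, one builds specific proper filters $y \in X$ whose presence in the generated subframe would force the $p$-filter closure conditions on $u$.

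The main obstacle will be step (ii), where a topological condition on $\mathop\uparrow u$ must be translated back into the algebraic closure condition defining a $p$-filter; this requires constructing appropriate witnessing filters in $L$ for each potential failure of closure. The translation depends essentially on the orthomodularity of $L$, since without it neither the identification of congruences with their $1$-classes via the biconditional $(a \wedge b) \vee (a^\perp \wedge b^\perp)$ nor the smooth interaction between the filter $u$ and the orthogonality relation $\perp$ holds in general, which is presumably why the authors leave the ortholattice case open.
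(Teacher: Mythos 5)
Your route---reducing to the classical bijection between congruences of an OML and $p$-filters, and then trying to match $p$-filters with elements of $\mathrm{PUGS}(L)$ via $F\mapsto\mathop\uparrow F$---is genuinely different from the paper's, which sends $\theta$ directly to $\mathop\uparrow[1]_\theta$ and argues it is a generated subframe because it is the range of the dual of the quotient map $L\to L/\theta$ (using the Sasaki hook to show that filters containing $[1]_\theta$ are $\theta$-saturated), while in the converse direction it defines the congruence $\{(a,b)\mid \widehat a\cap S=\widehat b\cap S\}$ directly from $S$ rather than passing through $p$-filters. Before the main issue, one repairable flaw: your justification of the reformulation of the generated-subframe condition rests on the claim that $x\not\perp y$ iff $x\cup y$ extends to a proper filter. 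That equivalence is false in a non-distributive ortholattice: in $MO_2$ the principal filters $\mathop\uparrow p$ and $\mathop\uparrow q$ generate the improper filter (as $p\wedge q=0$), yet $\mathop\uparrow p\not\perp\mathop\uparrow q$, since no $c$ has $c\in\mathop\uparrow q$ and $c^\perp\in\mathop\uparrow p$. The reformulation itself is nonetheless correct, for the simpler reason that $\perp$ is monotone under filter inclusion: if $u\perp y$ and $u\subseteq x$ then $x\perp y$.

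The fatal gap is in your step (i). Apply your own (correct) reformulation to the point $y=\mathop\uparrow 1=\{1\}$ of $X$: for any proper filter $u\neq\{1\}$ we have $u\not\subseteq\{1\}$, so generated-subframe-ness of $\mathop\uparrow u$ would require $u\perp\{1\}$, i.e., some $a$ with $a\in\{1\}$ and $a^\perp\in u$, forcing $0\in u$---impossible. Hence $\mathop\uparrow u$ is a generated subframe of $(X,\not\perp)$ only for $u=\{1\}$, so $\mathrm{PUGS}(L)$ is always a singleton, whereas already the four-element Boolean algebra (an OML) has four congruences and three proper $p$-filters. So step (i) fails for every $p$-filter other than $\{1\}$, and no amount of work on step (ii) can save the argument: the statement as literally written is refuted by any non-simple OML. (Your step (ii) is fine---the witness $y=\mathop\uparrow\bigl(b^\perp\vee(b\wedge a)\bigr)^\perp$ does force the Sasaki-hook closure of $u$---but it is vacuous given the above.) For what it is worth, the paper's own proof founders at the same point: it invokes the back clause of the $p$-morphism condition for the dual of the surjection $L\twoheadrightarrow L/\theta$ to conclude that its range is a generated subframe, and that clause already fails for $2\times 2\twoheadrightarrow 2$ at the pair $(\mathop\uparrow a,\{1\})$. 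So the obstruction is not an artifact of your decomposition; the proposition itself needs to be amended (e.g., by changing the relation or the ambient frame) before either proof strategy can go through.
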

\begin{proof}
  \newcommand{\ppl}{\mathop{\Uparrow}}
  For  $\theta \in C(L)$,
  it is well known that $[1]_\theta$ is a filter.
  Let $f(\theta) = \ppl [1]_\theta$,
  where $\ppl u$ for $u \in X$ is the principal upset generated by $u$.
  We see that $f(\theta) \in \mathrm{PUGS}(L)$
  and that $f$ is a map $C(L) \to \mathrm{PUGS}(L)$.
  Indeed, it suffices to show that $f(\theta)$ is a generated subframe
  with respect to the complement of the orthogonality relation of $X$.
  Consider the canonical surjection $\pi\colon L \twoheadrightarrow L/\theta$.
  The dual map $\pi^+$ is a UVO-map and \emph{a fortiori}
  a homeomorphism onto a subspace of $X$.
  We claim that $\ran \pi^+$, the range of $\pi^+$, is $f(\theta)$,
  whence it follows that $f(\theta)$ is a generated subframe
  as $\pi^+$ is weakly p-morphic and $f(\theta)$ is clearly upward closed.
  To see that $\ran \pi^+ = f(\theta)$,
  first recall that
  $u \in \ran \pi$
  if and only if there exists $u' \in \mathfrak F(L/\theta)$
  such that $ \pi^{-1}[u'] = u$.
  For every $u' \in \mathfrak F(L/\theta) $, we have $[1]_\theta \in u'$.
  Hence, if $u \in \ran \pi^+$, then $[1]_\theta \subseteq u$.
  Conversely, if $[1]_\theta \subseteq u$,
  assume $a \in u$ and $(a, a') \in \theta$ for $a, a' \in L$.
  We show that $a' \in u$, i.e., $u \in \ran \pi^+$.
  \newcommand{\shook}{\mathbin{\rightarrow}}
  Let $\shook$ be the so-called \emph{Sasaki hook},
  i.e., $x \shook y := x^\bot \vee (y \wedge x)$
  (see, e.g., \cite{pavicic}).
  We have $\pi(a \shook a') = \pi(a) \shook \pi(a') = 1$ by assumption.
  Therefore, $a \shook a' \in [1]_\theta \subseteq u$.
  Since $a \wedge (a\shook a') \in u$, we have $a' \in u$ as well.

  For $S \in \mathrm{PUGS}(X)$,
  let $g(S) = \{(a, b) \in L^2 \mid \widehat a \cap S = \widehat b \cap S\}$.
  We show that $g(S)$ is a congruence on $L$
  and that $g$ is a map $\mathrm{PUGS}(X) \to C(X)$.
  It suffices to show that $g(S)$ respects $\wedge$ and $(\bullet)^\bot$.
  The former case is evident.
  For the latter goal,
  it suffices to show that for $a, b \in L$ if
  $\widehat a \cap S = \widehat b \cap S$,
  then $\widehat{a^\bot} \cap S = \widehat{b^\bot} \cap S$.
  This can be proved by the translation into the normal modal logic KTB of reflexive and symmetric frames.

  It is easy to show that $f$ and $g$ are the inverses of each other
  by noting \[[1]_{g(\ppl u)} = \{a \in L \mid \widehat a \cap \ppl u = \ppl u\} = \{a \mid \widehat a \subseteq \ppl u\} = \{a \mid u \in \widehat a \}= u,\] which completes the proof. 
\end{proof}

\section{Future work}
 We intend to investigate the following applications and related themes of the results and constructions achieved in this work:
 
\begin{enumerate}
    \item Characterize the subclass of UVO-spaces which arise as the choice-free dual spaces of the modular and orthomodular lattices.  
    \item Develop a theory of topological models based on UVO-spaces for which various non-classical logics (e.g. orthologic and quantum logic) are complete. (Since the algebraic model for quantum logics of a finite dimensional Hilbert space is a modular lattice and the algebraic model for quantum logics of an infinite dimensional Hilbert space is an orthomodular lattice, the open problem of characterizing the dual UVO-spaces of the modular and orthomodular lattices must be accomplished before this can be fully addressed).   
    \item Investigate the connections between lattices of varieties of ortholattices and lattices of varieties of modal algebras corresponding to KTB (the normal modal logic of reflexive symmetric Kripke frames) and its variants. Such frames can be seen as arising by taking the set-theoretic complement of the orthogonality relation of an orthospace. We believe Goldblatt in \cite{goldblatt2} develops the first step in this direction.

\end{enumerate}

\section*{Acknowledgements} Both researchers thank their respective supervisors, Dr. Nick Bezhanishvili and Professor Wesley Holliday, for their expertise and guidance throughout the preparation of this paper. We also thank Dr. Tommaso Moraschini, as well as the participants at the Algebra/Coalgebra Seminar (Institute for Logic, Language, and Computation, University of Amsterdam), the BLAST 2021 conference (Department of Mathematical Sciences, New Mexico State University), and the Logica 2021 conference (Institute of Philosophy, Czech Academy of Sciences). Lastly, we thank the anonymous reviewer at \emph{Algebra Universalis} for their helpful comments and suggestions.


\begin{thebibliography}{99}

\bibitem{awodey}Awodey, S.: Category Theory, 2nd edn. Oxford Universtiy Press, Oxford (2006)

\bibitem{banaschewski}Banaschewski, B.: H\"ullensysteme und Erweiterungen von Quasi-Ordnungen. Zeitschrift fur Mathematische Logik und Grundlagen der Mathematik. \textbf{2}, 35--46 (1956) 

\bibitem{bell}Bell, J.: Orthospaces and quantum logic. Foundations of Physics. \textbf{15}, 1179--1202 (1985)  

\bibitem{bezhanishvili}Bezhanishvili, N., Holliday W.: Choice-free Stone duality. Journal of Symbolic Logic. \textbf{85}, 109--148 (2020)   
 
  \bibitem{bimbo}Bimb{\'o}, K.: Functorial duality for ortholattices and De Morgan lattices. Logica Universalis. \textbf{1}, 311--333 (2007) 
  
  \bibitem{birkhoff1}Birkhoff, G.: Applications of lattice algebra. Proceedings of the Cambridge Philosophical Society. \textbf{30}, 115--122 (1934)
  
  \bibitem{birkhoff}Birkhoff, G., Von Neumann, J.: The logic of quantum mechanics. Annals of Mathematics. \textbf{37}, 823--843 (1936) 
  
  \bibitem{blackburn}Blackburn, P., De Rijke. M., Venema. Y.: Modal Logic. Cambridge University Press, Cambridge (2001)   
  
  \bibitem{bruns}Bruns G., Harding J.: Algebraic aspects of orthomodular lattices. In: Coecke, B., Moore, D., Wilce, A. (eds.) Current Research in Operational Quantum Logic. Fundamental Theories of Physics, vol. 111, 37--65 (2000) 
  
  \bibitem{dedekind}Dedekind, R.: \"Uber die von drei moduln erzeugte dualgruppe. Mathematische Annalen. \textbf{53}, 371--403 (1900)  
  
  \bibitem{dickmann}Dickmann, M., Tressl, M., Schwartz, N.: Spectral Spaces. Cambridge University Press, Cambridge (2019)
  
  \bibitem{davey}Davey, B., Priestly, H.: Introduction to Lattices and Order, 2nd edn. Cambridge University Press, Cambridge (2002) 
  
  \bibitem{esakia}Esakia, L.: Topological Kripke models. Soviet Math. Dokl. \textbf{15}, 147-151 (1974)
  
  \bibitem{esakia1} Esakia, L.: Heyting algebras, duality theory. Trends in Logic. Ed. Bezhanishvili G, Holliday W. (2019)
  

\bibitem{tatra} Harding, J., Canonical completions of lattices and ortholattices. Tatra Mountains Mathematical Publications. \textbf{15}, 85--96 (1998)
 
\bibitem{goldblatt1}Goldblatt, R., The Stone space of an ortholattice. Bulletin of the London Mathematical Society. \textbf{7}, 45--48 (1975) 

\bibitem{goldblatt2}Goldblatt, R.I., A semantic analysis of orthologic. Journal of Philosophical Logic. \textbf{3}, 19--35 (1974) 

\bibitem{gj2016}González, L.J., Jansana, R. A topological duality for posets. Algebra Universalis. \textbf{76}, 455–478 (2016)


\bibitem{herrlich}Herrlich, H.: The Axiom of Choice. Lecture Notes in Mathematics Vol. 76, Springer, Berlin (2006)


\bibitem{hochster}Hochster, M.: Prime ideal structure in commutative rings. Transactions of the American Mathematical Society. \textbf{142}, 43--60 (1969) 

\bibitem{hms} Hoffman, K.H., Mislove, M.: 
The Pontryagin Duality of Compact 0-Dimensional Semilattices and its Applications.  Springer, Berlin (1974)


\bibitem{holland}Holland, S.S.: The current interest in orthomodular lattices. In: Hooker, C.A. (ed.) The Logico-Algebraic Approach to Quantum Mechanics. The University of Western Ontario Series in Philosophy of Science, vol. 1, 437-496 (1975)  

\bibitem{howard} Howard, P.: Variations of Rado's Lemma.
Mathematical Logic Quarterly. \textbf{39} (1) 353--356 (1993)

\bibitem{jonsson}J\'onsson, B., Tarski, A.: Boolean algebras with operators, part 1. American Journal of Mathematics. \textbf{73}, 891–939 (1951) 


\bibitem{kalmbach}Kalmbach, G.: Orthomodular Lattices. Academic Press. London (1983) 

\bibitem{maclaren}MacLaren, M.: Atomic orthocomplemented lattices. Pacific Journal of Mathematics. $\mathbf{14}$, 597--612 (1964)

\bibitem{moshier}Moshier, M., Jipsen, P.: Topological duality and lattice expansions, I: a topological construction of canonical extensions. Algebra Universalis. \textbf{71}, 109--126 (2014)

\bibitem{pavicic} Pavi\v{c}i\'c, M.: Minimal quantum logic with merged implications.  International Journal of Theoretical Physics. \textbf{26}, 845--852 (1987)

\bibitem{priestly}Priestly, H.: Representation of distributive lattices by means of ordered Stone spaces. Bulletin of the London Mathematical Society. \textbf{2}, 186--190 (1970) 

\bibitem{rubin}Rubin, H., Scott, D.: Some topological theorems equivalent to the Boolean prime ideal theorem. Bulletin of the American Mathematical Society. \textbf{60}, 389 (1954)  

\bibitem{stone1}Stone, M.: The theory of representation for Boolean algebras. Transactions of the American Mathematical Society. \textbf{40}, 37--111 (1936) 
\bibitem{stone2}Stone, M.: Topological representations of distributive lattices and Brouwerian logics. \v{C}asopis pro P\v{e}stov\'an\'i Matematiky a Fysiky. \textbf{67}, 1--25 (1937)  
\bibitem{tarski1}Tarski, A.: \"Uber additive und multiplikative Mengenk\"orper und Mengenfunktionen. Sprawozdania z Posiedze\'n Towarzystwa Naukowego Warszawskiego, Wydzia l III,
Nauk Matematyczno-fizycznych. \textbf{30}, 151--181 (1937) 
\bibitem{tarski2}Tarski, A.: Der Aussagenkalku\"ul und die Topologie. Fundamenta Mathematicae. \textbf{31}, 103--134 (1938) 
\bibitem{vietoris}Vietoris, L.: Bereiche zweiter Ordnung. Monatshefte f\"ur Mathematik und
Physik. \textbf{32}, 1151--1169 (1922) 


\end{thebibliography}
\end{document}